\newtheorem{thm}{Theorem}
\newtheorem{cor}[thm]{Corollary}
\newtheorem{defi}[thm]{Definition}
\newtheorem{rem}[thm]{Remark}
\newtheorem{nota}[thm]{Notation}
\newtheorem{princ}[thm]{Principle}
\newtheorem{ack}[thm]{Acknowledgement}
\newtheorem*{tempo*}{Template}
\newtheorem{theorem}[thm]{Theorem}
\newtheorem{lemma}[thm]{Lemma}
\newtheorem{definition}[thm]{Definition}
\newtheorem{proposition}[thm]{Proposition}
\newtheorem{problem}{Problem}
\newcommand\be{\begin{equation}}
\newcommand\ee{\end{equation}} 
\def\bdefi{\begin{defi}\rm}
\def\edefi{\end{defi}}
\def\bnota{\begin{nota}\rm}
\def\enota{\end{nota}}
\def\FIVE{\Pi_{1}^{1}\text{-\textup{\textsf{CA}}}_{0}}
\def\SIX{\Pi_{2}^{1}\text{-\textsf{\textup{CA}}}_{0}}
\def\SIXk{\Pi_{k}^{1}\text{-\textsf{\textup{CA}}}_{0}}
\def\SIXK{\Pi_{k}^{1}\text{-\textsf{\textup{CA}}}_{0}^{\omega}}
\def\ATR{\textup{\textsf{ATR}}}
\def\PITK{\Pi_{k}^{1}\textup{-\textsf{TR}}_{0}}
\def\TR{\textup{\textsf{TR}}}
\def\Z{\textup{\textsf{Z}}}
\def\LUS{\textup{\textsf{LUS}}}
\def\EGO{\textup{\textsf{EGO}}}
\def\ZFC{\textup{\textsf{ZFC}}}
\def\seq{\textup{\textsf{seq}}}
\def\type{\textup{\textsf{type}}}
\def\RM{\textup{\textsf{RM}}}
\def\closed{\textup{\textsf{closed}}}
\def\open{\textup{\textsf{open}}}
 \def\r{\mathbb{r}}
\def\c{\textup{\textsf{c}}}
\def\RCA{\textup{\textsf{RCA}}}
\def\MCT{\textup{\textsf{MCT}}}
\def\net{\textup{\textsf{net}}}
\def\MCT{\textup{\textsf{MCT}}}
\def\({\textup{(}}
\def\){\textup{)}}
\def\WO{\textup{\textsf{WO}}}
\def\RCAo{\textup{\textsf{RCA}}_{0}^{\omega}}
\def\ACAo{\textup{\textsf{ACA}}_{0}^{\omega}}
\def\WKL{\textup{\textsf{WKL}}}
\def\WWKL{\textup{\textsf{WWKL}}}
\def\bye{\end{document}}
\def\N{{\mathbb  N}}
\def\Q{{\mathbb  Q}}
\def\R{{\mathbb  R}}
\def\L{\textsf{\textup{L}}}
\def\di{\rightarrow}
\def\asa{\leftrightarrow}
\def\ACA{\textup{\textsf{ACA}}}
\def\QFAC{\textup{\textsf{QF-AC}}}
\def\LLP{\textup{\textsf{LLP}}}
\def\cocode{\textup{\textsf{cocode}}}
\def\M{\mathcal{M}}
\def\HBU{\textup{\textsf{HBU}}}
\def\SUS{\textup{\textsf{SUS}}}
\def\HEC{\textup{\textsf{HEC}}}
\def\SS{\textup{\bf{S}}}
\def\IND{\textup{\textsf{IND}}}
\def\blambda{\pmb{\lambda}}
\def\sg{\textup{\textsf{sg}}}
\def\LIN{\textup{\textsf{LIN}}}
\def\WHBU{\textup{\textsf{WHBU}}}
\def\MCT{\textup{\textsf{MCT}}}
\def\eps{\varepsilon}
\def\ECF{\textup{\textsf{ECF}}}
\def\SFF{\textup{\textsf{SFF}}}
\newcommand{\m}{{\bf m}}
\numberwithin{equation}{section}
\numberwithin{thm}{section}
\begin{document}
\title[On the Vitali covering theorem]{On the logical and computational properties of the Vitali covering theorem}
\author{Dag Normann}
\address{Department of Mathematics, The University of Oslo, Norway}
\email{dnormann@math.uio.no}
\author{Sam Sanders}
\address{Department of Philosophy II, RUB Bochum, Germany}
\email{sasander@me.com}

\begin{abstract}
We study a version of the Vitali covering theorem, which we call $\WHBU$ and which is a direct weakening of the Heine-Borel theorem for uncountable coverings, called $\HBU$. 
We show that $\WHBU$ is central to measure theory by deriving it from various central approximation results related to \emph{Littlewood's three principles}.
A natural question is then \emph{how hard} it is to prove $\WHBU$ (in the sense of Kohlenbach's \emph{higher-order Reverse Mathematics}), and \emph{how hard} it is to compute the objects claimed to exist by $\WHBU$ (in the sense of Kleene's schemes S1-S9).    
The answer to both questions is `extremely hard', as follows: on one hand, in terms of the usual scale of (conventional) comprehension axioms, $\WHBU$ is only provable using Kleene's $\exists^{3}$, which implies full second-order arithmetic.    
On the other hand, realisers (aka witnessing functionals) for $\WHBU$, so-called $\Lambda$-functionals, are computable from Kleene's $\exists^{3}$, but not from weaker comprehension functionals.   
Despite this hardness, we show that $\WHBU$, and certain $\Lambda$-functionals, behave much better than $\HBU$ and the associated class of realisers, called $\Theta$-functionals.  
In particular, we identify a specific $\Lambda$-functional called $\Lambda_{\SS}$ which adds no computational power to the \emph{Suslin functional}, in contrast to $\Theta$-functionals.  
Finally, we introduce a hierarchy involving $\Theta$-functionals and $\HBU$.  
\end{abstract}
%

\maketitle
\thispagestyle{empty}

\section{Introduction}\label{intro}
The most apt counterpart in mathematical logic of the commonplace \emph{one cannot fit a square peg into a round hole} is perhaps the following: a Turing machine cannot directly access third-order objects, like e.g.\ measurable functions.  
Thus, the development of measure theory in any framework based on Turing computability must proceed via second-order \emph{stand-ins} for higher-order objects.  
In particular, the following frameworks, (somehow) based on Turing computability, proceed by studying the computational properties of certain \emph{countable representations} of measurable objects: Reverse Mathematics (\cite{simpson2}*{X.1}), constructive analysis\footnote{Note that Bishop's constructive analysis is not based on Turing computability \emph{directly}, but one of its `intended models' is however (constructive) recursive mathematics (see \cite{brich}).  One aim of Feferman's predicative analysis is to capture constructive reasoning in the sense of Bishop.\label{dug}} (\cite{beeson1}*{I.13} for an overview), predicative analysis$^{\ref{dug}}$ (\cite{littlefef}), and computable analysis (\cite{bewierook}).   

\smallskip

The \emph{existence} of the aforementioned countable representations is guaranteed by various well-known approximation results.  Perhaps the most basic and best-known among these results go by the name of \emph{Littlewood's three principles}.  The latter are found throughout the literature, including Tao's \emph{introduction to measure theory} (see \cites{kesteisdenbeste,royden1, steengoed, brezen, yuppie, taomes}), and were originally formulated by Littlewood as:
\begin{quote}
There are three principles, roughly expressible in the following terms: Every (measurable) set is nearly a finite sum of intervals; every function (of class $L^{p}$) is nearly continuous; every convergent sequence of functions is nearly uniformly convergent.
 (\cite{kleinbos}*{p.\ 26})
\end{quote}
The second and third principle are heuristic descriptions of the \emph{Lusin and Egorov theorems}. 
In light of their fundamental role for measure theory, it is then a natural question \emph{how hard} it is to prove these theorems, in the sense of Kohlenbach's \emph{higher-order Reverse Mathematics} (RM hereafter; see Section \ref{prelim1}), and \emph{how hard} it is to compute the countable approximations therein, in the sense of Kleene's schemes S1-S9 (see Section \ref{prelim2}).  The aim of this paper is to answer these connected questions.  
As it turns out, the answer to both questions is `extremely hard', as follows.  

\smallskip

In Section \ref{RMM}, we show that the aforementioned approximation theorems (and related results) imply $\WHBU$ as in Principle \ref{ohyee}, which is a version of the Vitali covering theorem that is a direct weakening of $\HBU$; the latter is
the Heine-Borel theorem for uncountable coverings as in Principle~\ref{drifty}.
In terms of standard comprehension axioms, $\WHBU$ is only provable using Kleene's $\exists^{3}$, which implies full second-order arithmetic (see Section \ref{moresubsec}).  
Our approach to measure theory is akin to that of second-order RM (see Remark \ref{dreft}), but we shall also study the framework from \cite{elkhuisje}, namely in Section~\ref{onderkruipers}. 
We show in Section \ref{GL} that our results pertaining to $\WHBU$ are \emph{robust}, in that they do not depend on the framework at hand.  

\smallskip

In Section \ref{CTM}, we will study the computational properties of realisers of $\WHBU$, called\footnote{Like for Heine-Borel compactness, there is no \emph{unique} realiser for $\WHBU$ as in Principle \ref{ohyee}, as we can always add dummy elements to the sub-cover at hand.} 
 \emph{weak fan functionals} or \emph{$\Lambda$-functionals} (see \cite{dagsam, dagsamII}).  Any $\Lambda$-functional is computable from $\exists^{3}$, but not from weaker comprehension functionals like $\SS_{k}^{2}$ that decide $\Pi_{k}^{1}$-formulas (see Section \ref{moresubsec}).  
Despite this observed hardness, we show that $\WHBU$ and $\Lambda$-functionals behave much better than (Heine-Borel) compactness and the associated class of realisers, called \emph{special fan functionals} or \emph{$\Theta$}-functionals.  
In particular, we identify a specific $\Lambda$-functional, called $\Lambda_{\SS}$, which adds no computational power to the \emph{Suslin functional}, in contrast\footnote{It is shown in \cite{dagsamIII, dagcie18} that $\Theta$-functionals yields realisers for $\ATR_{0}$ when combined with the Turing jump functional $\exists^{2}$ from Section~\ref{prelim2}; $\Theta$-functionals also yield Gandy's \emph{Superjump} $\mathbb{S}$, and even fixed points of non-monotone inductive definitions, when combined with the Suslin functional. } to $\Theta$-functionals.  
As an application, we show that higher-order $\FIVE$ plus $\WHBU$ cannot prove $\HBU$.
We also show that $\Theta$-functionals and (Heine-Borel) compactness yield new hierarchies akin to second-order arithmetic in Section \ref{kew}.

\smallskip

In Section \ref{konkelfoes}, we formulate the conclusion to this paper as follows: we discuss a conjecture and a template related to our results in Section \ref{tempy}, while an interesting `dichotomy' phenomenon is observed in Section \ref{dich}.
In Section \ref{muse}, we discuss some foundational musings related to the coding practise of Reverse Mathematics.  

\smallskip

Finally, some new insights regarding `normal' and `non-normal' mathematics have recently come to the fore in e.g.\ \cites{dagsamX, samph}, 
providing a more `grand scheme of things' view of the results in this paper, as discussed in Remark \ref{dorf}.  In a nutshell, the above results 
should be viewed as motivation for the development and study of a new scale not based on comprehension or discontinuous functionals.  

\section{Preliminaries}\label{prelim}
We introduce \emph{Reverse Mathematics} in Section \ref{prelim1}, as well as its generalisation to \emph{higher-order arithmetic}.  
In particular, since we shall study measure theory, we discuss the representation of sets in Section \ref{prelim1}.
As our main results are proved using techniques from \emph{computability theory}, we discuss the latter in Section~\ref{prelim2}.
\subsection{Reverse Mathematics}\label{prelim1}
\subsubsection{Introduction}
Reverse Mathematics (RM hereafter) is a program in the foundations of mathematics initiated around 1975 by Friedman (\cites{fried,fried2}) and developed extensively by Simpson (\cite{simpson2}).  
The aim of RM is to identify the minimal axioms needed to prove theorems of ordinary, i.e.\ non-set theoretical, mathematics. 
We refer to \cite{stillebron} for a basic introduction to RM and to \cite{simpson2, simpson1} for an overview of RM.  We expect basic familiarity with RM, but do sketch some aspects of Kohlenbach's \emph{higher-order} RM (\cite{kohlenbach2}) essential to this paper, including the `base theory' $\RCAo$ in Section \ref{allyourbases2}.  Since we shall study measure theory, we need to represent sets in $\RCAo$, as discussed in Definition \ref{keepintireal}.\eqref{strijker} and (in more detail) Section \ref{kuiken}.    

\smallskip

Now, `classical' RM is based on $\L_{2}$, the language of \emph{second-order arithmetic} $\Z_{2}$.  By contrast, higher-order RM makes use of the richer language of \emph{higher-order arithmetic}.  
Indeed, while $\L_{2}$ is restricted to natural numbers and sets of natural numbers, higher-order arithmetic can accommodate sets of sets of natural numbers, sets of sets of sets of natural numbers, et cetera.  
To formalise this idea, we introduce the collection of \emph{all finite types} $\mathbf{T}$, defined by the two clauses:
\begin{center}
(i) $0\in \mathbf{T}$   and   (ii)  If $\sigma, \tau\in \mathbf{T}$ then $( \sigma \di \tau) \in \mathbf{T}$,
\end{center}
where $0$ is the type of natural numbers, and $\sigma\di \tau$ is the type of mappings from objects of type $\sigma$ to objects of type $\tau$.
In this way, $1\equiv 0\di 0$ is the type of functions from numbers to numbers, and where  $n+1\equiv n\di 0$.  Viewing sets as given by characteristic functions, we note that $\Z_{2}$ only includes objects of type $0$ and $1$.    

\smallskip

The language $\L_{\omega}$ includes variables $x^{\rho}, y^{\rho}, z^{\rho},\dots$ of any finite type $\rho\in \mathbf{T}$.  Types may be omitted when they can be inferred from context.  
The constants of $\L_{\omega}$ includes the type $0$ objects $0, 1$ and $ <_{0}, +_{0}, \times_{0},=_{0}$  which are intended to have their usual meaning as operations on $\N$.
Equality at higher types is defined in terms of `$=_{0}$' as follows: for any objects $x^{\tau}, y^{\tau}$, we have
\be\label{aparth}
[x=_{\tau}y] \equiv (\forall z_{1}^{\tau_{1}}\dots z_{k}^{\tau_{k}})[xz_{1}\dots z_{k}=_{0}yz_{1}\dots z_{k}],
\ee
if the type $\tau$ is composed as $\tau\equiv(\tau_{1}\di \dots\di \tau_{k}\di 0)$.  
Furthermore, $\L_{\omega}$ also includes the \emph{recursor constant} $\mathbf{R}_{\sigma}$ for any $\sigma\in \mathbf{T}$, which allows for iteration on type $\sigma$-objects as in the special case \eqref{special}.  
Formulas and terms are defined as usual.  

\subsubsection{The base theory of higher-order Reverse Mathematics}\label{allyourbases2}
We introduce the base theory $\RCAo$ of higher-order RM and discuss its connection to $\RCA_{0}$, the base theory of second-order RM. 
\bdefi\label{allyourbases}
The base theory $\RCAo$ consists of the following axioms.
\begin{enumerate}
\item  Basic axioms expressing that $0, 1, <_{0}, +_{0}, \times_{0}$ form an ordered semi-ring with equality $=_{0}$.
\item Basic axioms defining the well-known $\Pi$ and $\Sigma$ combinators (aka $K$ and $S$ in \cite{avi2}), which allow for the definition of \emph{$\lambda$-abstraction}. 
\item The defining axiom of the recursor constant $\mathbf{R}_{0}$: For $m^{0}$ and $f^{1}$: 
\be\label{special}
\mathbf{R}_{0}(f, m, 0):= m \textup{ and } \mathbf{R}_{0}(f, m, n+1):= f(n, \mathbf{R}_{0}(f, m, n)).
\ee
\item The \emph{axiom of extensionality}: for all $\rho, \tau\in \mathbf{T}$, we have:
\be\label{EXT}\tag{$\textsf{\textup{E}}_{\rho, \tau}$}  
(\forall  x^{\rho},y^{\rho}, \varphi^{\rho\di \tau}) \big[x=_{\rho} y \di \varphi(x)=_{\tau}\varphi(y)   \big].
\ee 
\item The induction axiom for quantifier-free\footnote{To be absolutely clear, variables (of any finite type) are allowed in quantifier-free formulas of the language $\L_{\omega}$: only quantifiers are banned.} formulas of $\L_{\omega}$.
\item $\QFAC^{1,0}$: The quantifier-free Axiom of Choice as in Definition \ref{QFAC}.
\end{enumerate}
\edefi
\bdefi\label{QFAC} The axiom $\QFAC$ consists of the following for all $\sigma, \tau \in \textbf{T}$:
\be\tag{$\QFAC^{\sigma,\tau}$}
(\forall x^{\sigma})(\exists y^{\tau})A(x, y)\di (\exists Y^{\sigma\di \tau})(\forall x^{\sigma})A(x, Y(x)),
\ee
for any quantifier-free formula $A$ in the language of $\L_{\omega}$.
\edefi
Recursion as in \eqref{special} is called \emph{primitive recursion}; the class of functionals obtained from $\mathbf{R}_{\rho}$ for all $\rho \in \mathbf{T}$ is called \emph{G\"odel's system $T$} of all (higher-order) primitive recursive functionals.  

\smallskip

Finally, as discussed in \cite{kohlenbach2}*{\S2}, $\RCAo$ and $\RCA_{0}$ prove the same sentences `up to language' as the latter is set-based and the former function-based.  
This is proved via the highly useful $\ECF$-interpretation, discussed next.  
\begin{rem}[The $\ECF$-interpretation]\label{ECF}\rm
The technical definition of $\ECF$ may be found in \cite{troelstra1}*{p.\ 138, \S2.6}.
Intuitively speaking, the $\ECF$-interpretation $[A]_{\ECF}$ of a formula $A\in \L_{\omega}$ is just $A$ with all variables 
of type two and higher replaced by countable representations of continuous functionals.  Such representations are also (equivalently) called `associates' or `codes' (see \cite{kohlenbach4}*{\S4}). 
The $\ECF$-interpretation connects $\RCAo$ and $\RCA_{0}$ (see \cite{kohlenbach2}*{Prop.\ 3.1}) in that if $\RCAo$ proves $A$, then $\RCA_{0}$ proves $[A]_{\ECF}$, again `up to language', as $\RCA_{0}$ is 
formulated using sets, and $[A]_{\ECF}$ is formulated using types, namely only using type zero and one objects.  
\end{rem}

\subsubsection{Basic definitions}
We list some basic definitions and notations needed below.

\smallskip

Firstly, we use the usual notations for natural, rational, and real numbers, and the associated functions, as introduced in \cite{kohlenbach2}*{p.\ 288-289}.  
\begin{defi}[Real numbers and related notions in $\RCAo$]\label{keepintireal}\rm~
\begin{enumerate}
\renewcommand{\theenumi}{\roman{enumi}}
\item Natural numbers correspond to type zero objects, and we use `$n^{0}$' and `$n\in \N$' interchangeably.  Rational numbers are defined as signed quotients of natural numbers, and `$q\in \Q$' and `$<_{\Q}$' have their usual meaning.    
\item Real numbers are represented by fast-converging Cauchy sequences $q_{(\cdot)}:\N\di \Q$, i.e.\  such that $(\forall n^{0}, i^{0})(|q_{n}-q_{n+i})|<_{\Q} \frac{1}{2^{n}})$.  
We use the `hat function' from \cite{kohlenbach2}*{p.\ 289} to guarantee that every $f^{1}$ defines a real number.  
\item We write `$x\in \R$' to express that $x^{1}:=(q^{1}_{(\cdot)})$ represents a real as in the previous item and write $[x](k):=q_{k}$ for the $k$-th approximation of $x$.    
\item Two reals $x, y$ represented by $q_{(\cdot)}$ and $r_{(\cdot)}$ are \emph{equal}, denoted $x=_{\R}y$, if $(\forall n^{0})(|q_{n}-r_{n}|\leq {2^{-n+1}})$. Inequality `$<_{\R}$' is defined similarly.  
We sometimes omit the subscript `$\R$' if it is clear from context.           
\item Functions $F:\R\di \R$ are represented by $\Phi^{1\di 1}$ mapping equal reals to equal reals, i.e. $(\forall x , y\in \R)(x=_{\R}y\di \Phi(x)=_{\R}\Phi(y))$.\label{rext}
\item The relation `$x\leq_{\tau}y$' is defined as in \eqref{aparth} but with `$\leq_{0}$' instead of `$=_{0}$'.  Binary sequences are denoted `$f^{1}, g^{1}\leq_{1}1$', but also `$f,g\in C$' or `$f, g\in 2^{\N}$'.  
\label{strijker}
\end{enumerate}
\end{defi}
We now discuss the issue of representations of real numbers.
\begin{rem}\label{forealsteve}\rm
First of all, introductory analysis courses often provide an explicit construction of $\R$ (perhaps in an appendix), while in practise one generally makes use of the axiomatic properties of $\R$, and not the explicit construction.  
Now, there are a number of different\footnote{The `early' constructions due to Dedekind (see e.g.\ \cite{kindke}; using cuts) and Cantor (see e.g.\ \cite{cant}; using Cauchy sequences) were both originally published in 1872.} such constructions: Tao uses Cauchy sequences in his text \cite{taoana1} and discusses decimal expansions in the Appendix \cite{taoana1}*{\S B}.  Hewitt-Stromberg also use Cauchy sequences in \cite{hestrong}*{\S5} and discuss Dedekind cuts in the exercises (\cite{hestrong}*{p.~46}).  Rudin uses Dedekind cuts in \cite{rudin} and mentions that Cauchy sequences yield the same result. 

\smallskip

Secondly, Definition \ref{keepintireal} is based on (fast-converging) Cauchy sequences, but Hirst has shown that over $\RCA_{0}$, individual real numbers can be converted between various representations (\cite{polahirst}).  Thus, the choice of representation in Definition \ref{keepintireal} does not really matter, even over $\RCA_{0}$.    
Moreover, the latter proves (\cite{simpson2}*{II.4.5}) that the real number system satisfies all the axioms of an Archimedian ordered field, i.e.\ we generally work with the latter axiomatic properties in RM, rather than with the representations (whatever they are).

\smallskip

Thirdly, converting sequences of real numbers between representations cannot always be done over $\RCA_{0}$, and $\WKL_{0}$ or $\ACA_{0}$ are sometimes needed, as also studed in \cite{polahirst}.
By the results in the latter, (fast-converging) Cauchy sequences are the `best' representation for the development of RM.  
\end{rem}
The previous remark deals with weak systems: $\exists^{2}$ from Section \ref{moresubsec} provides a uniform conversion facility between the various representations studied in \cite{polahirst}. 

\smallskip

Secondly, sets are represented by characteristic functions in Defintion \ref{openset}.  
Given $(\exists^{2})$ from Section \ref{moresubsec}, sets as in Definition \ref{openset} become `proper' characteristic function, only taking values `0' and `$1$'.  
For this and other reasons, we often assume the former axiom when dealing with sets.  
\bdefi[Sets in $\RCAo$]\label{openset}
We let $Y: \R \di \R$ represent subsets of $\R$ as follows: we write `$x \in Y$' for `$Y(x)>_{\R}0$'.   A set $Y\subseteq \R$ ‘open' if for every $x \in Y$, there is an open ball $B(x, r) \subset Y$ with $r^{0}>0$.  
A set $Y$ is called `closed' if the complement, denoted $Y^{c}=\{x\in \R: x\not \in Y \}$, is open. 
\edefi
\noindent

Hereafter, an `open set' refers to Definition \ref{openset}, while `RM-open set' refers to the RM-definition of open set as in \cite{simpson2}*{II.5.6}.
Now, one can effectively convert between RM-open sets and (RM-codes for) continuous characteristic functions (see \cite{simpson2}*{II.7.1}), i.e.\ our definition of (open) set is a generalisation of the RM-concept. 
We define countable sets as follows (see e.g.\ \cite{kunen}).  
\bdefi
A set $A\subset \R$ is \emph{countable} if there exists $Y:\R\di \N$ such that
\be\label{poemp}
(\forall x, y\in A)( Y(x)=_{0}Y(y)\di x=_{\R}y).
\ee
We say that $Y$ as in \eqref{poemp} is \emph{injective on $A$} or an \emph{injection from $A$ to $\N$}.
\edefi
In case $Y$ as in \eqref{poemp} is also surjective, we say that $A$ is \emph{strongly countable}, although we will
not study this concept in this paper. 

\smallskip

\noindent
Finally, for completeness, we list our notational conventions on finite sequences.  
\begin{nota}[Finite sequences]\label{skim}\rm
For $\rho=0,1$, we assume a dedicated type for `finite sequences of objects of type $\rho$', namely $\rho^{*}$.  Since the usual coding of pairs of numbers goes through in $\RCAo$, we shall not always distinguish between $0$ and $0^{*}$. 
Similarly, we do not always distinguish between `$s^{\rho}$' and `$\langle s^{\rho}\rangle$', where the former is `the object $s$ of type $\rho$', and the latter is `the sequence of type $\rho^{*}$ with only element $s^{\rho}$'.  The empty sequence for the type $\rho^{*}$ is `$\langle \rangle_{\rho}$', usually with the typing omitted.  

\smallskip

Furthermore, we denote by `$|s|=n$' the length of the finite sequence $s^{\rho^{*}}=\langle s_{0}^{\rho},s_{1}^{\rho},\dots,s_{n-1}^{\rho}\rangle$, where $|\langle\rangle|=0$, i.e.\ the empty sequence has length zero.
For sequences $s^{\rho^{*}}, t^{\rho^{*}}$, we denote by `$s*t$' the concatenation of $s$ and $t$, i.e.\ $(s*t)(i)=s(i)$ for $i<|s|$ and $(s*t)(j)=t(|s|-j)$ for $|s|\leq j< |s|+|t|$. For a sequence $s^{\rho^{*}}$, we define $\overline{s}N:=\langle s(0), s(1), \dots,  s(N-1)\rangle $ for $N^{0}<|s|$.  
For a sequence $\alpha^{0\di \rho}$, we also write $\overline{\alpha}N=\langle \alpha(0), \alpha(1),\dots, \alpha(N-1)\rangle$ for \emph{any} $N^{0}$.  By way of shorthand, 
$(\forall q^{\rho}\in Q^{\rho^{*}})A(q)$ abbreviates $(\forall i^{0}<|Q|)A(Q(i))$, which is (equivalent to) quantifier-free if $A$ is.   
\end{nota}
\subsubsection{Some higher-order systems and functionals}\label{moresubsec}
We introduce some functionals and axioms which constitute the counterparts of second-order arithmetic $\Z_{2}$, and some of the Big Five systems, in higher-order RM.
We use the `standard' formulation of these functionals as in \cite{kohlenbach2, dagsamIII}.  We are dealing with `conventional' comprehension, i.e.\ formula classes like $\Pi_{k}^{1}$ only boast first- and second-order parameters.

\smallskip
\noindent
First of all, $\ACA_{0}$ is readily derived from:
\begin{align}\label{mu}\tag{$\mu^{2}$}
(\exists \mu^{2})(\forall f^{1})\big[ (\exists n)(f(n)=0) \di [f(\mu(f))=0&\wedge (\forall i<\mu(f))f(i)\ne 0 ]\\
& \wedge [ (\forall n)(f(n)\ne0)\di   \mu(f)=0]    \big], \notag
\end{align}
and $\ACA_{0}^{\omega}\equiv\RCAo+(\mu^{2})$ proves the same sentences as $\ACA_{0}$ by \cite{hunterphd}*{Theorem~2.5}.   The (unique) functional $\mu^{2}$ in $(\mu^{2})$ is also called \emph{Feferman's $\mu$} (\cite{avi2}), 
and is clearly \emph{discontinuous} at $f=_{1}11\dots$; in fact, $(\mu^{2})$ is equivalent to the existence of $F:\R\di\R$ such that $F(x)=1$ if $x>_{\R}0$, and $0$ otherwise (\cite{kohlenbach2}*{\S3}), and to 
\be\label{muk}\tag{$\exists^{2}$}
(\exists \varphi^{2}\leq_{2}1)(\forall f^{1})\big[(\exists n)(f(n)=0) \asa \varphi(f)=0    \big]. 
\ee
\noindent
Secondly, $\FIVE$ is readily derived from the following sentence:
\be\tag{$\SS^{2}$}
(\exists\SS^{2}\leq_{2}1)(\forall f^{1})\big[  (\exists g^{1})(\forall n^{0})(f(\overline{g}n)=0)\asa \SS(f)=0  \big], 
\ee
and $\FIVE^{\omega}\equiv \RCAo+(\SS^{2})$ proves the same $\Pi_{3}^{1}$-sentences as $\FIVE$ by \cite{yamayamaharehare}*{Theorem 2.2}.   The (unique) functional $\SS^{2}$ in $(\SS^{2})$ is also called \emph{the Suslin functional} (\cite{kohlenbach2}).
By definition, the Suslin functional $\SS^{2}$ can decide whether a $\Sigma_{1}^{1}$-formula \emph{in normal form}, i.e.\ as in the left-hand side of $(\SS^{2})$, is true or false.   We similarly define the functional $\SS_{k}^{2}$ which decides the truth or falsity of $\Sigma_{k}^{1}$-formulas in normal form; we also define 
the system $\SIXK$ as $\RCAo+(\SS_{k}^{2})$, where  $(\SS_{k}^{2})$ expresses that $\SS_{k}^{2}$ exists.  Note that we allow formulas with \emph{function} parameters, but \textbf{not} \emph{functionals} here.
In fact, Gandy's \emph{Superjump} (\cite{supergandy}) constitutes a way of extending $\FIVE^{\omega}$ to parameters of type 2; see the discussion in \cite{dagsamV}*{\S2.3}.

\smallskip

\noindent
Thirdly, full second-order arithmetic $\Z_{2}$ is readily derived from $\cup_{k}\SIXK$, or from:
\be\tag{$\exists^{3}$}
(\exists E^{3}\leq_{3}1)(\forall Y^{2})\big[  (\exists f^{1})Y(f)=0\asa E(Y)=0  \big], 
\ee
and we therefore define $\Z_{2}^{\Omega}\equiv \RCAo+(\exists^{3})$ and $\Z_{2}^\omega\equiv \cup_{k}\SIXK$, which are conservative over $\Z_{2}$ by \cite{hunterphd}*{Cor.\ 2.6}. 
Despite this close connection, $\Z_{2}^{\omega}$ and $\Z_{2}^{\Omega}$ can behave quite differently, as discussed in e.g.\ \cite{dagsamIII}*{\S2.2}.   The functional from $(\exists^{3})$ is also called `$\exists^{3}$', and we use the same convention for other functionals.  

\smallskip

Fourth, recall that the Heine-Borel theorem (aka \emph{Cousin's lemma} \cite{cousin1}*{p.\ 22}) states the existence of a finite sub-cover for an open cover of certain spaces. 
Now, a functional $\Psi:\R\di \R^{+}$ gives rise to the \emph{canonical} cover $\cup_{x\in I} I_{x}^{\Psi}$ for $I\equiv [0,1]$, where $I_{x}^{\Psi}$ is the open interval $(x-\Psi(x), x+\Psi(x))$.  
Hence, the uncountable cover $\cup_{x\in I} I_{x}^{\Psi}$ has a finite sub-cover by the Heine-Borel theorem; in symbols:
\begin{princ}[$\HBU$]\label{drifty}
$(\forall \Psi:\R\di \R^{+})(\exists  y_{1}, \dots, y_{k}\in I){(\forall x\in I)}(\exists i\leq k)(x\in I_{y_{i}}^{\Psi})$.
\end{princ}
By the results in \cite{dagsamIII, dagsamV}, $\Z_{2}^{\Omega}$ proves $\HBU$ but $\Z_{2}^{\omega}+\QFAC^{0,1}$ cannot, 
and many basic properties of the \emph{gauge integral} (\cite{zwette, mullingitover}) are equivalent to $\HBU$.  We have also studied the \emph{Lindel\"of lemma} for $\R$ in \cites{dagsamIII, dagsamV}. 
\begin{princ}[$\LIN$]\label{drifty2}
$(\forall \Psi:\R\di \R^{+})(\exists  (y_{n})_{n\in \N}){(\forall x\in \R)}(\exists k\in \N)(x\in I_{y_{k}}^{\Psi})$.
\end{princ}
Furthermore, since Cantor space (denoted $C$ or $2^{\N}$) is homeomorphic to a closed subset of $[0,1]$, the former inherits the same property.  
In particular, for any $G^{2}$, the corresponding `canonical cover' of $2^{\N}$ is $\cup_{f\in 2^{\N}}[\overline{f}G(f)]$ where $[\sigma^{0^{*}}]$ is the set of all binary extensions of $\sigma$.  By compactness, there are $ f_0 , \ldots , f_n\in 2^{\N}$ such that $\cup_{i\leq n}[\bar f_{i} G(f_i)]$ still covers $2^{\N}$.  By \cite{dagsamIII}*{Theorem 3.3}, $\HBU$ is equivalent to the same compactness property for $C$, as follows:
\be\tag{$\HBU_{\c}$}
(\forall G^{2})(\exists  f_{1}, \dots, f_{k} \in 2^{\N} ){(\forall f \in 2^{\N})}(\exists i\leq k)(f\in [\overline{f_{i}}G(f_{i})]).
\ee
 On a technical note, when we say `finite sub-cover', we mean the set of the associated neighbourhoods, not `just' their union.    
We now introduce the specification $\SFF(\Theta)$ for a functional $\Theta^{2\di 1^{*}}$ which computes a finite sequence as in $\HBU_{\c}$.  
We refer to such a functional $\Theta$ as a \emph{realiser} for the compactness of Cantor space, and simplify its type to `$3$'.  
Clearly, there is no unique such $\Theta$: just add new sequences to $\Theta(G)$.
\be\tag{$\SFF(\Theta)$}
(\forall G^{2})(\forall f^{1}\leq_{1}1)(\exists g\in \Theta(G))(f\in [\overline{g}G(g)]).
\ee
Any functional $\Theta$ satisfying $\SFF(\Theta)$ is called a \emph{special fan functional} or simply a \emph{$\Theta$-functional}.
As to its provenance, $\Theta$-functionals were introduced as part of the study of the \emph{Gandy-Hyland functional} in \cite{samGH}*{\S2} via a different definition.  
These are identical up to a term of G\"odel's $T$ of low complexity by \cite{dagsamII}*{Theorem~2.6}.  

\smallskip

Finally, we have studied countable sets in \cites{dagsamXI, dagsamXII, dagsamX}, in the guise of the following. 
\begin{princ}[$\cocode_{0}$]
For any countable $A\subset [0,1]$, there is a sequence $(x_{n})_{n\in \N}$ that contains all elements of $A$.
\end{princ}
This principle is `explosive' in the sense that $\FIVE^{\omega}+\cocode_{0}$ proves $\SIX$, while $\FIVE^{\omega}$ is $\Pi_{3}^{1}$-conservative over $\FIVE$ (see \cite{dagsamXI, dagsamX}).  

\subsection{Higher-order computability}\label{prelim2}
As some of our main results are part of computability theory, we make our notion of `computability' precise as follows.  
\begin{enumerate}
\item[(I)] We adopt $\ZFC$, i.e.\ Zermelo-Fraenkel set theory with the Axiom of Choice, as the official metatheory for all results, unless explicitly stated otherwise.
\item[(II)] We adopt Kleene's notion of \emph{higher-order computation} as given by his nine clauses S1-S9 (see \cites{longmann, Sacks.high}) as our official notion of `computable'.
\end{enumerate}
A thorough introduction to Kleene computability theory may be found in \cite{longmann}.
We do recall an important notion from the latter.  
\begin{rem}[Normal and non-normal mathematics]\label{dorf}\rm
The distinction between `normal' and `non-normal' mathematics is based on the following definition.
\begin{center}
For $n\geq 2$, a functional of type $n$ is called \emph{normal} if it computes Kleene's $\exists^{n}$ following S1-S9, and \emph{non-normal} otherwise. (\cite{longmann}*{\S5.4})  
\end{center}
Similarly, we call a statement about type $n$ objects ($n\geq 2$) \emph{normal} if it implies the existence of $\exists^{n}$ over Kohlenbach's base theory from Section \ref{prelim1}, and \emph{non-normal} otherwise.  
We also use `\emph{strongly} non-normal' for type $3$ functionals that do not compute $\exists^{3}$ \emph{relative to $\exists^{2}$}.  
Note that by \cite{kohlenbach2}*{\S3}, $(\exists^{2})$ is equivalent to the existence of a discontinuous function on $\R$.

\smallskip

Historically, higher-order computability theory and higher-order RM have mostly been focused on the normal world.  
Recently, the authors have identified $\HBU$ and $\Theta$-functionals as interesting parts of the \emph{non-normal} world (\cite{dagsam, dagsamII,dagsamIII}). 
Since $\HBU$ can be formulated in third-order arithmetic, `$\HBU$ is non-normal' means that $\HBU$ does not prove $(\exists^{2})$ in this case.  
The associated $\Theta$-functionals are fourth-order and `a given $\Theta$-functional is non-normal' thus means that it does not compute $\exists^{3}$.  
The same holds for $\WHBU$ (see Definition \ref{ohyee}) and the associated $\Lambda$-functionals (see Definition~\ref{lambdagvd}). 
The uncountability of $\R$, when formulated using injections or bijections to $\N$, is similarly non-normal (see \cite{dagsamX, dagsamXI, dagsamXII}).  

\smallskip

However, it is an empirical observation that the above non-normal theorems and functionals, which are intuitively `weak', are classified as `hard to prove' and the associated functionals as `hard to compute' \emph{relative to the normal scale based on comprehension and discontinuous functionals}: in each case $\Z_{2}^{\omega}$ cannot prove the theorem and no $\SS_{k}^{2}$ can compute a realiser, while $\Z_{2}^{\Omega}$ and $\exists^{3}$ suffice.  
In this way, the normal scale gives intuitively \emph{weak} non-normal theorems and functionals \emph{the same} classification, namely \emph{rather strong}.  In this light, the normal scale  seems unsuitable for analysing non-normal theorems and functionals.  
Thus, the need for the development of the non-normal scale arises, which is the topic of this paper, and also of \cite{dagsam, dagsamII, dagsamIII,dagsamVII, dagsamV, dagsamIX, dagsamX, dagsamXI, dagsamXII}.  
Here, Theorem \ref{theorem.lambda} is a `milestone' result from computability theory while Theorem \ref{Frink} is a milestone in RM, where the non-normal nature of $\WHBU$ follows from that of $\HBU$.

\smallskip

Finally, the importance of the of `normal versus non-normal' distinction was only really understood by the authors after the completion of \cites{dagsamX, samph}.
\end{rem}
\section{Reverse Mathematics and $\WHBU$}\label{RMM}
\subsection{Introduction}
In this section, we study the RM of measure theory,  $\WHBU$ in particular, as summarised by the following list.  
\begin{itemize}
\item In Section \ref{introk}, we introduce $\WHBU$, a version of the Vitali covering theorem that is a direct weakening of $\HBU$ from Section \ref{moresubsec}.  We investigate generalisations of $\WHBU$ akin to those studied in second-order RM.  
\item We show that various instances of Littlewood's three principles (including Lusin's and Egorov's theorems) imply $\WHBU$ (Section \ref{kolp}).
\item We study Kreuzer's measure theory \cite{elkhuisje} (Section \ref{onderkruipers}). We derive the Egorov's theorem but show that the Heine-Borel theorem cannot be proved.  
\item We show that $\WHBU$ also occurs in an alternative (very different) approach to the Lebesgue integral (Section \ref{GL}), namely the \emph{gauge integral}.  
A similar result for the Riemann integral is obtained.
\end{itemize}
Thus, $\WHBU$ is shown to arise naturally in different approaches to measure theory, i.e.\ our results can be said to be independent of the particular framework.  
Regarding the third item, we \emph{could} obtain equivalences involving $\WHBU$, but this would require a base theory beyond the scope of this paper. 

\smallskip

Finally, we discuss an important convention as to the meaning of the Lebesgue measure.  
In a nutshell, except in Section \ref{onderkruipers}, we interpret the Lebesgue measure in a `virtual' or `comparative' sense similar to the approach in second-order RM.  
\begin{rem}[A measure by any other name]\label{dreft}\rm
First of all, Lebesgue measure theory can be developed in second-order RM (see e.g.\ \cite{simpson2}*{X.1}).  
However, the Lebesgue measure is defined via a supremum (see Definition \cite{simpson2}*{X.1.2}) that need not always exist in weak systems like $\RCA_{0}$.  Nonetheless, $\L_{2}$-formulas
like e.g.\ 
\be\label{exakt}
\text{\emph{the Lebesgue measure of a given open set $U$ is at most $1$}}
\ee
always makes sense, even in $\RCA_{0}$.  Indeed, \eqref{exakt} essentially expresses that any continuous approximation (from below) of the characteristic function of $U$ will have Riemann integral at most $1$.
This `comparative' or `virtual' meaning of \eqref{exakt} is described in detail in \cite{simpson2}*{p.\ 392}.  

\smallskip

We will always interpret comparative statements involving the Lebesgue measure in this `virtual' or `comparative' way (except Section \ref{onderkruipers}).  
Now, the usual definition of the Lebesgue measure on $\R$ is as follows (see e.g.\ \cite{taomes}):
\be\label{LM}
{\textstyle \lambda (E)=\inf \left\{\sum _{k} |I_{k}|:{(I_{k})_{k\in \mathbb {N} }}{\text{ is a sequence of intervals and }}E\subset \bigcup _{k}I_{k}\right\},}
\ee
in case this infimum exists.  Due to the quantification over sequences, we observe that $\Z_{2}^{\Omega}$ can always define $\lambda(E)$, assuming this infimum exists.  
However, statements like `$\lambda(E)>1$' can be interpreted in the aforementioned comparative sense, for which the associated infimum need not exist.  
To be absolutely clear, the formula `$\lambda(E)\geq \frac{1}{2}$' is purely symbolic and short for
\[\textstyle
\text{\emph{For a sequence of open intervals $ (I_{k})_{k\in \mathbb {N}} $ with $E\subset \bigcup _{k\in \N}I_{k}$, we have $\frac{1}{2}\leq\sum _{k=1}^{\infty } |I_{k}|$}}.
\]
In this way, comparative statements about the Lebesgue measure of arbitrary sets make sense in $\RCAo$, even if the infimum in \eqref{LM} does not always exist.  
It is important to note that this convention essentially `hard-wires' the first Littlewood principle into the definition of the Lebesgue measure.  
Based on the above convention, we say that a set $E \subseteq [0,1]$ is \emph{measurable} if 
\[
\lambda(E) + \lambda([0,1] \setminus E) \leq 1 .
\]
Similarly, a \emph{measurable} function $f:\R\di \R$ is defined as usual, namely as saying that for all $t\in \R$, the set $\{x\in \R : f(x)>_{\R}t\}$ is measurable.  
Since the latter set or a union $\cup_{n\in \N}A_{n}$ only exists as a set given $\exists^{2}$, we will often work over $\ACAo$.

\smallskip

Finally, the Lebesgue measure for open sets (in the sense of second-order RM) exists in $\ACA_{0}$, while the latter system suffices for a general treatment of the subject (see \cite{simpson2}*{X.1}).  
We let $(\blambda_{\open})$ be the non-normal statement that there exists $\blambda:(\R\di \R)\di \R$ such that for open $E\subset [0,1]$, $\blambda(E)$ equals the infimum as in \eqref{LM}.
Below, we show that the fragment $(\blambda_{\open})$ of the Lebesgue integral allows for a generalisation of $\WHBU$ to general coverings of arbitrary closed sets. 
\end{rem}

\subsection{The Vitali covering theorem and $\WHBU$}\label{introk}

\subsubsection{Introduction}\label{hintro}
In this section, we introduce $\WHBU$, a version of the Vitali covering theorem that is a direct weakening of $\HBU$ from Section \ref{moresubsec}.
We also establish some basic properties of $\WHBU$ in Section \ref{klopje}.

\smallskip

As to notation, recall that $\big(x-{\Psi(x)}, x+{\Psi(x)}\big)$ is denoted as $I_{x}^{\Psi}$ or $B(x, \Psi(x))$ for $\Psi:[0,1]\di \R^{+}$, while $\cup_{x\in [0,1]}B(x, \Psi(x))$ is called the \emph{canonical cover} of the unit interval generated by $\Psi$.
Also recall the convention concerning the Lebesgue measure from Remark \ref{dreft}.  Now consider the following:
\begin{princ}[$\WHBU$]\label{ohyee}
For $ \Psi:[0,1]\di \R^{+}$ and $\eps>_{\R}0$, there are $y_{0}, \dots, y_{n}\in [0,1]$ such that the measure of $\cup_{i\leq n} I_{y_{i}}^{\Psi} $ is at least $1-\eps$.
\end{princ}
As suggested by its name, $\WHBU$ is a weakening of $\HBU$. 
In fact, $\HBU$ is to $\WHBU$ what $\WKL$ is to $\WWKL$.  
The latter is \emph{weak weak K\"onig's lemma} and may be found in \cite{simpson2}*{X.1}, while the $\ECF$-translation converts the former two into the latter two.  
Now, $\WHBU$ constitutes the essence of \emph{Vitali's covering theorem} as follows, a version of which was introduced in 1907 (\cite{vitaliorg}).  
\begin{quote}
If $\mathcal{I}$ is a Vitali cover of $E\subset I$, then there is a sequence of disjoint intervals $I_{n}$ in $\mathcal I$ such that $E\setminus \cup_{n\in \N}I_{n}$ has measure zero.
\end{quote}
Indeed, a \emph{Vitali cover} of a set is an open cover in which every element of the set can be covered by an open set of \emph{arbitrary small} size (\cite{royden1}*{Ch.\ 5.1}).    
Vitali's covering theorem for countable coverings is equivalent, over $\RCA_0$, to $\WWKL_0$ by \cite{simpson2}*{X.1.13}.  Working in $\ACAo+\QFAC^{0,1}$, we may apply the latter to $\WHBU$ to obtain a countable Vitali sub-cover of a set of measure 1 of a given\footnote{For $\Psi:[0,1]\di \R^{+}$, one defines $\Psi_{k}:([0,1]\times \N)\di\R^{+} $ as $\Psi_{k}(x):= \frac{\Psi(k)}{2^{k}}$, which yields a `canonical' Vitali cover $\cup_{k\in \N}\cup_{x\in [0,1]}B(x, \Psi_{k}(x))$ of $[0,1]$ generated by $\Psi$.\label{lightfoot}} Vitali cover. 
In this way Vitali's covering theorem is provable from $\WHBU$, while proving the latter from the former is an easy exercise. 
As discussed in \cite{opborrelen}*{Note, p.\ 50-51}, Borel actually proves the (countable) Heine-Borel theorem to justify his use of the following lemma: 
\begin{center}
\emph{If $1>_{\R}\sum_{n=0}^{\infty}|a_{n}-b_{n}|$, then $\cup_{n\in \N}(a_{n}, b_{n})$ does not cover $[0,1]$}.  
\end{center}
However, the latter is equivalent to $\WWKL$ by \cite{simpson2}*{X.1.9}, which provides some historical motivation and context.  

\smallskip

Like for $\HBU$, $\Z_{2}^{\Omega}$ proves $\WHBU$, but $\Z_{2}^{\omega}$ cannot\footnote{The model $\mathcal{M}$ from \cite{dagsamV}*{\S4.1} satisfies $\SIXK+\QFAC^{0,1}$, but not $\WHBU$.  
This model is obtained from the proof that a realiser for $\WHBU$ is not computable in any type two functional. 
}.  
Hence, $\WHBU$ is quite hard to prove (in terms of conventional comprehension), and the finite sequence of reals in $\WHBU$ is similarly hard to compute: 
a $\Lambda$-functional is (equivalently) defined in Section \ref{CTM} by saying that $\Lambda(\Psi, \eps)$ computes $\langle y_{1}, \dots, y_{n}\rangle$ as in $\WHBU$.   
Like for $\Theta$-functionals, there is no unique such $\Lambda$-functional. 
Moreover, no type two functional can compute a $\Lambda$-functional (see \cite{dagsam, dagsamII}), which includes the comprehension functionals $\SS_{k}^{2}$ from Section \ref{moresubsec}.
These hardness properties do not disappear if we restrict $\Psi:[0,1]\di \R^{+}$ in $\WHBU$ to e.g.\ Borel\footnote{The proof of \cite{samcie22}*{Theorem 16 and Cor.\ 17} goes through with trivial modification, i.e.\ $\WHBU$ restricted to Baire 2 or semi-continuous functions implies that \emph{there is no injection from $2^{\N}$ to $\N$}.  The latter statement is however not provable in $\Z_{2}^{\omega}+\QFAC^{0,1}$ by \cite{dagsamXII}*{Theorem 3.2}.\label{floeker}} or semi-continuous functions.  

\subsubsection{Generalisations}\label{klopje}
We study the following rather straightforward generalisations of $\WHBU$, the analogues of which have been studied in second-order RM, namely \cite{simpson2}*{IV.1.6} and \cite{brownphd}*{Lemma 3.13}. 
\begin{enumerate}
\item[(a)] We replace the `interval' covering $\cup_{x\in [0,1]}B(x, \Psi(x))$ by a `general' covering $\cup_{x\in [0,1]}O_{x}$, only assuming that the open $O_{x}$ contains $x$ for any $x\in [0,1]$.
\item[(b)] We replace the covering $\cup_{x\in [0,1]}B(x, \Psi(x))$ of the unit interval $[0,1]$ by a covering $\cup_{x\in E}B(x, \Psi(x))$ of arbitrary $E\subset [0,1]$.
\end{enumerate}
As will become clear, these generalisations follow from $\WHBU$ and basic properties of the Lebesgue measure. 
We show that the same generalisations for $\HBU$ and the Lindel\"of lemma are much stronger than the original principles, highlighting 
a fundamental difference.  This kind of behaviour is also discussed in \cite{samcie21}.

\smallskip

First of all, motivated by item (a) right above, we define the following notion. 
\bdefi
A \emph{general} open covering of $X\subset \R$ is a mapping $\lambda x.O_{x}:\R\di (\R\di \R)$ such that $O_{x}$ is an open set containing $x$, for any $x\in X$.
\edefi
The following result shows that a fragment of the Lebesgue measure already significantly generalises $\WHBU$.  
We note that Kreuzer's measure theory from Section~\ref{onderkruipers} proves $(\blambda_{\open})$ (see Remark \ref{dreft}) and $\WHBU$ (see Theorems \ref{egotripke} and \ref{egotripke2}).
\begin{thm}[$\ACAo+(\blambda_{\open})$]\label{kag} The following are equivalent.
\begin{itemize}
\item $\WHBU$
\item For a general open covering $\lambda x.O_{x}$ and $\eps>0$,  there are $x_{0}, \dots, x_{k}\in [0,1]$ such that the measure of $\cup_{i\leq k}O_{x_{i}}$ is $>1-\eps$.
\end{itemize}
\end{thm}
\begin{proof}
Let $\lambda y.O_{y}$ be as in the theorem and note that the following follows by definition for any  $x\in [0,1]$:
\be\textstyle\label{gingi}
(\exists n\in \N)(\blambda(O_{x}\cap B(x, \frac{1}{2^{n+1}})  )=\frac{1}{2^{n}}),
\ee
where we note that the set in \eqref{gingi} is open.
Let $B(x)$ be the least $n$ as in \eqref{gingi}.
Then $\Psi(x):=2^{-(B(x)+2)}$ yields a canonical covering of $[0,1]$.  For $\eps>0$, $\WHBU$ yields $x_{0}, \dots, x_{k}\in [0,1]$ such that $\cup_{i \leq k} B(x_i, \Psi(x_{i}))$ has measure at least $1-\eps$.
Clearly, $x_{0}, \dots, x_{k}\in [0,1]$ is also such that the measure of $\cup_{i\leq k}O_{x_{i}}$ is at least $1-\eps$.  Indeed, while $B(x, \Psi(x))$ may not be a subset of $O_{x}$, we do have that the measure of $O_{x}$ is at least that of $B(x, \Psi(x))$.
\end{proof}
In short, $\WHBU$ `bootstraps' itself to general open coverings, thanks to (a fragment of) the Lebesgue measure.  
By contrast, the Lindel\"of lemma for general open coverings implies the `explosive' principle $\cocode_{0}$ from Section \ref{moresubsec}.
\begin{thm}[$\ACAo$]\label{kassle}
The Lindel\"of lemma for general open coverings of $\R$ implies $\cocode_{0}$. 
\end{thm}
\begin{proof}
Fix $A\subset [0,1]$ and $Y:[0,1]\di \N$ injective on $A$.  
Note that $\mu^{2}$ can enumerate all rationals in $A$, i.e.\ we may assume $A\cap \Q=\emptyset$.
Define the set $B$ as follows:
\[
y\in B \asa (\exists n\in \N)\big[ y\in [2n+1, 2n+2] \wedge (y-(2n+1)) \in A \wedge Y(y-(2n+1))=n     \big]. 
\]
Clearly, if we can enumerate the reals in $B$, we can enumerate the reals in $A$.
Now define the following general open covering on $[n, n+1]$ (for any $n\in \N$):
\be\label{maarho2}
O_{x}:=
\begin{cases}
\textup{the set $(n-1, n+3)\setminus B$} & \textup{if }  x\not \in B\wedge x\in [n, n+1)\\
\textup{the interval $B(x, d_{n}(x))$} & \textup{if } x\in B\wedge x\in (n, n+1),
\end{cases}
\ee
where $d_{n}(x)=\min\big( \frac{|x-n|}{2}, \frac{|x-(n+1)|}{2}  \big)$ in case $x\in (n, n+1]$. 
Clearly, $O_{x}$ is an open set such that $x\in O_{x}$ for any $x\geq 0$ and the extension to $\R$ is trivial.
Now let $(x_{n})_{n\in \N}$ be such that $\cup_{n\in \N}O_{x_{n}}$ covers $\R$.  
Use $\mu^{2}$ to remove all elements in the sequence not in $B$.  
The resulting sequence lists all reals in $B$ as `by definition' reals in $B$ are not covered by the set in the first case of \eqref{maarho2}.
Thus, we are done. 
\end{proof}
The proof of Theorem \ref{kag} also shows that any $\Lambda$-functional is readily generalised to general open coverings if we have access to the Lebesgue measure (for open sets).
By Theorem \ref{kank}, $\Theta$-functionals for general open coverings are quite explosive. 
\begin{princ}[$Z$-functional]\label{rampant}
A functional $Z^{3}$ is called a $Z$-functional if $Z(\lambda x.O_{x})$ is a finite sequence $x_{0}, \dots, x_{k}\in [0,1]$ such that $[0,1]$ is covered by $\cup_{i\leq k}O_{x_{i}}$ for every general open covering $\lambda x.O_{x}$ of $[0,1]$.
\end{princ}
Let $\Omega_{\textup{b}}$ be the partial functional that returns $0$ if the input set is the empty set and returns $1$ in if the input set is a singleton.  
Basic as this functional may seem, together with $\SS^{2}$, it computes $\SS_{2}^{2}$ (see \cite{dagsamXIII, dagsamXII}).  Moreover, we have the following. 
\begin{thm}\label{kank}
Together with $\exists^{2}$, a $Z$-functional computes $\Omega_{\textup{b}}$.  
\end{thm}
\begin{proof}
Let $X\subset [0,1]$ be finite and define the following general open covering:
\be\label{maarho}
O_{x}:=
\begin{cases}
\textup{the set $(-2,2)\setminus X$} & \textup{ if } x\not \in X\\
\textup{the interval $(-2, 2)$} & \textup{ otherwise },
\end{cases}
\ee
Clearly, $O_{x}$ is an open set such that $x\in O_{x}$ for any $x\in [0,1]$.
Now consider $Z(\lambda x.O_{x})=(x_{0}, \dots, x_{k})$ and note that in case for all $i\leq k$, we have $x_{i}\not \in X$, the set $X$ must be empty.
Hence, define $\Omega_{\textup{b}}(X)$ as $0$ in this case, and $1$ otherwise. 
\end{proof}
Secondly, motivated by item (b) from the beginning of this section, we study coverings of arbitrary sets, as in the following principle.
\begin{princ}[$\WHBU^{+}$]
For $\Psi:[0,1]\di \R^{+}$, $\eps>0$, and non-empty $E\subset [0,1]$ with measure $ e\in [0,1]$, there are $x_{0}, \dots, x_{k}\in E$ such that the measure of $\cup_{i\leq k}B(x_{i}, \Psi(x_{i}))$ is at least $e-\eps$
\end{princ}
We let $\WHBU_{\RM}^{+}$ be $\WHBU^{+}$ restricted to RM-closed sets $E$.  We now show that $\WHBU$ can be `bootstrapped' as follows. 
\begin{thm}[$\ACAo$]\label{fagan} We have $\WHBU\asa \WHBU^{+}_{\RM}\asa \WHBU^{+}$.
\end{thm}
\begin{proof}
We first prove $\WHBU\di \WHBU^{+}_{\RM}$.
Fix $\Psi:[0,1]\di \R^{+}$, $\eps>0$, and RM-closed $E\subset [0,1]$, where the latter is represented by the complement of $\cup_{n\in \N}(a_{n}, b_{n})$.  
Define $\Phi:[0,1]\di \R^{+}$ as follows:
\be\label{dagwim}
\Phi(x):= 
\begin{cases}
\Psi(x) & \textup{in case $x\in E$}\\
D(x) & \textup{in case $x\not \in E$}
\end{cases},
\ee
where $D(x)$ is $r\in \Q^{+}$ such that $B(x, r)\subset (a_{m}, b_{m})$ in case $m\in \N$ is the least natural such that $x\in (a_{m},b_{m}) $, and $0$ otherwise.  
 Apply $\WHBU$ to $\cup_{x\in [0,1]}B(x, \Phi(x))$ to obtain $x_{0}, \dots, x_{k}\in [0,1]$ such that the measure of $\cup_{i\leq k}B(x_{i}, \Phi(x_{i}))$ is at least $1-\eps$.
Let $y_{0}, \dots, y_{m}$ be those $x_{i} $ for $i\leq k$ that are in $E$, and let $z_{0}, \dots, z_{n}$ be the remaining ones.  
Since all sets involved are intervals, we may use the usual (second-order) Lebesgue measure $\lambda$ and obtain:
\begin{eqnarray*}
1-\eps <\lambda(\cup_{i\leq k}B(x_{i}, \Phi(x_{i})))
&\leq \lambda(\cup_{j\leq m}B(y_{j}, \Phi(y_{j})))+\lambda(\cup_{i\leq  n }B(z_{i}, \Phi(z_{i}))) \\
&= \lambda(\cup_{j\leq m}B(y_{j}, \Psi(y_{j})))+\lambda(\cup_{i\leq  n }B(z_{i}, \Phi(z_{i})))\\
&\leq \lambda(\cup_{j\leq m}B(y_{j}, \Psi(y_{j})))+(1-e), 
\end{eqnarray*}
which shows that $y_{0}, \dots, y_{m}\in E$ are as required by $\WHBU_{\RM}^{+}$.  

\smallskip

To show that $\WHBU^{+}_{\RM}\di \WHBU^{+}$, fix $\Psi:[0,1]\di \R^{+}$, $\eps>0$, and $E\subset [0,1]$ with measure $\lambda(E)=e$.
Then $[0,1]\setminus E$ has measure $1-e$ and by definition there is a sequence of open intervals $(I_{n})_{n\in \N}$ such that $\cup_{n\in \N}I_{n}$ covers $[0,1]\setminus E$ and has measure at most $(1-e)+\eps/2$.
Then $C:=[0,1]\setminus \cup_{n\in \N}I_{n}$ is RM-closed, satisfies $C\subset E$, and has measure at least $e-\eps/2$.  Now apply $\WHBU_{\RM}^{+}$ for $C$ and $\eps/2$.   
\end{proof} 
We observe that the second part of the proof only goes through because our comparative interpretation of the Lebesgue measure essentially hard-codes Littlewood's first principle. 
The first part of the proof of the theorem is `effective' and shows that a $\Lambda$-functional is readily generalised to RM-closed sets.
To generalise $\Lambda$-functionals to coverings of closed sets, let $\WHBU_{\closed}^{+}$ be the restriction of $\WHBU^{+}$ to closed sets (Definition \ref{openset}). 
\begin{cor}[$\ACAo+(\blambda_{\open})$]\label{carry}
We have $\WHBU\di \WHBU_{\closed}^{+}$.
\end{cor}
\begin{proof}
Recall $B:\R\di \N$ as defined from \eqref{gingi}.  In the proof of Theorem \ref{fagan}, modify the functional $\Phi$ from \eqref{dagwim} as follows: $\Phi(x)$ is $\Psi(x)$ in case $x\in E$, and $\frac{1}{2^{B(x)}}$ otherwise.  
The first part of the proof of the theorem now goes through. 
\end{proof}
The proof of the corollary shows that a $\Lambda$-functional is readily generalised to any closed set, assuming the Lebesgue measure $\blambda$ as in $(\blambda_{\open})$.
However, if we consider Vitali covers as in Footnote \ref{lightfoot}, then we can shrink the measure of $\cup_{i\leq k}B(x_{i}, \Psi(x_{i}))$ from Theorem \ref{fagan} below $e+\eps$ if necessary.   
In this way, a $\Lambda$-functional generalised to general open coverings of closed sets is computationally equivalent to the combination of: a `standard' $\Lambda$-functional and the Lebesgue measure $\blambda$ as in $(\blambda_{\open})$.
The general case, involving coverings of arbitrary sets, goes through \emph{mutatis mutandis}.

\smallskip

Next, we show that the same generalisations for the Lindel\"of lemma are quite powerful.  
A `fixed radius interval covering' of $E$ is $\cup_{x\in E}B(x, \eps)$ for some $\eps>0$.
\begin{thm}[$\ACAo$]\label{koti}
The principle $\cocode_{0}$ follows from the Lindel\"of lemma for fixed radius interval coverings of closed sets in $\R$.
\end{thm}
\begin{proof}
Consider the sets $A, B$ from the proof of Theorem \ref{kassle}.  Now apply the Lindel\"of lemma from the theorem to $\cup_{x\in B} (x-\frac{1}{2}, x+\frac{1}{2})$, which readily yields of enumeration on $B$, and hence of $A$.  
\end{proof}
On a historical note, Lindel\"of in \cite{blindeloef} formulates his lemma for general open coverings of \emph{any} set, 
while the Heine-Borel theorem for open coverings of closed sets may be found in e.g.\ \cite{lennes}.

\smallskip

Finally, Theorem \ref{koti} deals with coverings consisting of open intervals with a fixed radius.
Trivial as such coverings many seem, they play an important role in Section \ref{kolp} in the form of the following principle, which follows from $\WHBU$.  
\begin{princ}[$\WHBU^{-}_{\RM}$]
For $\eps, \delta >0$ and non-empty RM-closed $E \subset [0,1]$, there are $x_0, … ,x_k \in E$ such that $\cup_{i \leq k} B(x_i, \eps)$ has measure at least $\lambda(E)- \delta$.  
\end{princ}
We note that, $\WHBU^{-}_{\RM}$ is provable in $\WKL_{0}$ by \cite{jeranimo}.  With the gift of hindsight, $\WHBU_{\RM}^{-}$ also readily follows from $\HBU$, and $\ECF$ yields the result from \cite{jeranimo}.
\subsection{Littlewood's three principles}\label{kolp}
In this section, we derive $\WHBU_{\m}$ from a number of theorems that embody Littlewood's principles, like Lusin's theorem and Egorov's theorem (Section \ref{lusin}), Tao's `Littlewood-like' princples (Section \ref{datsnijdtkleinhout}), and convergence theorems (Remark \ref{conv}).  The restriction of $\WHBU$ to measurable functions is $\WHBU_{\m}$.

\smallskip

We recall that $\WHBU^{-}_{\RM}$ from Section \ref{klopje} is provable in $\RCAo+\WKL$ by \cite{jeranimo}.
We also recall the conventions regarding the Lebesgue measure from Remark \ref{dreft}.
On a conceptual note, we will observe that the (comparative or virtual) definition of the Lebesgue measure as in Remark \ref{dreft} essentially `hard-codes' Lusin's first principle, 
yielding substantial generalisations/robust variations of the Lusin and Egorov theorems (and related results).

\subsubsection{Theorems by Lusin and Egorov}\label{lusin}
In this section, we derive $\WHBU_{\m}$ from the well-known theorems due to Lusin and Egorov. 

\smallskip

First of all, Lusin's theorem expresses that any measurable function is a continuous function on nearly all of its domain. 
This theorem constitutes the second of Littlewood's principles, and Lusin proved this theorem for real intervals in \cite{lusin1} in 1912, but it had been established previously by Borel (\cite{korrelen}), Lebesgue (\cite{korrelen2}), and Vitali (\cite{karellen}).
We note that Lusin's theorem is often\footnote{The  proofs of Lusin's theorem in e.g.\ \cite{royden1}*{p.\ 74}, \cite{folly}*{p.\ 64}, and \cite{kleinbos}*{p.\ 29}, are  basic applications of Egorov's theorem.} proved via a straightforward application of Egorov's theorem.

\smallskip

Secondly, there are multiple formulations of Lusin's theorem  (\cite{kleinbos,kesteisdenbeste, royden1,ruudgulit, wieden}) and we first study the one found in e.g.\ \cite{wieden}, as follows.  
\begin{princ}[$\LUS$]
For measurable $f:I \di \R$ and $\eps>0$, there exists measurable $E\subset [0,1]$ with measure at least $1-\eps$ and such that $f$ restricted to $E$ is continuous.  
\end{princ}
We let $\LUS_{\RM}$ be $\LUS$ where the set $E$ is additionally assumed to be RM-closed.  
We note that Lusin's original formulation from \cite{lusin1} involves a \emph{perfect} set $E$, i.e.\ closed and no isolated points.
\begin{thm}[$\ACAo$]\label{kingmatti}~
We have $\LUS\asa \LUS_{\RM}$ and $\LUS\di \WHBU_{\m}$.
\end{thm}
\begin{proof}
For the equivalence, let $f:I \di \R$ be measurable and $\eps>0$.  Now apply $\LUS$ for $\eps/2$ to obtain measurable $E\subset [0,1]$ with measure at least $1-\eps/2$ and such that $f$ restricted to $E$ is continuous.
Since $[0,1]\setminus E$ has measure at most $\eps/2$, there is -by definition- a sequence of open intervals $(I_{n})_{n\in \N}$ such that $\cup_{n\in \N}I_{n}$ has measure at most $\eps$ and covers $[0,1]\setminus E$.   
By definition, $C:= [0,1]\setminus \cup_{n\in \N}I_{n}$ is RM-closed, has measure at least $1-\eps$, and is contained in $E$.  Thus, the set $C$ is as required for $\LUS_{\RM}$ and the equivalence follows.

\smallskip

For the second part, fix measurable $\Psi:[0,1]\di \R^{+}$ and $\eps>0$, and let $E$ be as in $\LUS_{\RM}$.  
Since $E$ has an RM-code, it is separably closed by \cite{browner}*{Theorem 3.0}.
Recall that \cite{samrep}*{Theorem 2.4} provides the Tietze extension theorem for (third-order) functions that are (epsilon-delta) continuous on a separably closed set in $[0,1]$.
Hence, there is \emph{continuous} $\Phi:[0,1]\di \R$ which equals $\Psi$ on $E$.  
Following \cite{kohlenbach4}*{\S4}, $\Phi$ has an RM-code, and applying \cite{simpson2}*{IV.2.11}, we know $\Phi$ attains its minimum on $E$, which must be non-zero. 
We therefore have $(\exists k_{0}^{0})(\forall x\in E)( |\Psi(x)|\geq \frac{1}{2^{k_{0}}})$ and applying $\WHBU^{-}_{\RM}$ for $\eps=\frac{1}{2^{k_{0}}}$ finishes the proof. 
\end{proof}
Thirdly, consider the following alternative formulation of Lusin's theorem.
\begin{princ}[$\LUS'$]
For measurable $f:I \di \R$ and $\eps>0$, there is a continuous $g:I\di \R$ such that $ \{x\in I: f(x)\ne g(x)\}$ has measure below $\eps$.  
\end{princ}

\begin{thm}[$\ACAo$]\label{highqual}
We have $\LUS'\di\WHBU_{\m} $.
\end{thm}
\begin{proof}
For $\eps>0$ and measurable $\Psi:I\di \R^{+}$, let $g:I\di \R$ be the continuous function provided by $\LUS'$ for $\eps/2$.  
Define $E=\{ x\in [0,1]: \Psi(x)=_{\R}g(x)\}$ using $(\exists^{2})$ and note that is has measure at least $1-\eps/2$.
Since $[0,1]\setminus E$ has measure as most $\eps/2$, there is by definition a sequence $(I_{n})_{n\in \N}$ of open intervals such that the union $\cup_{n\in \N}I_{n}$ has measure at most $\eps$ and covers $[0,1]\setminus E$.
Define the RM-closed set $F$ as $[0,1]\setminus \cup_{n\in \N}I_{n}$ and note that $F\subset E$ and that $F$ has measure at least $1-\eps$.
Following \cite{kohlenbach4}*{\S4}, one readily shows that $g$ has an RM-code, and applying \cite{simpson2}*{IV.2.11}, we know $g$ attains its minimum on $F$, which must be non-zero. 
We now have $(\exists k_{0}^{0})(\forall x\in F)( |\Psi(x)|\geq \frac{1}{2^{k_{0}}})$.
Apply $\WHBU_{\RM}^{-}$ to obtain $\WHBU_{\m}$.
\end{proof}
A basic fact of measure theory is that measurable functions can be expressed as the (pointwise) limit of simple functions (\cite{taomes}*{Theorem 1.3.20}).  
By converting the latter in continuous piecewise linear functions, one readily derives Lusin's theorem as in $\LUS$, i.e.\ one also obtains $\WHBU_{\m}$.

\smallskip

Fourth, we derive $\WHBU_{\m}$ from \emph{Egorov's theorem}, which was published around 1900 (\cite{egoroof, severe, korrelen2}) and expresses that a convergent sequence of measurable functions is uniformly convergent outside an arbitrarily small set (see e.g.\ \cite{taomes}*{\S1.3} and \cite{royden1}*{Ch.\ 3}).
Thus, Egorov's theorem corresponds to Littlewood's third principle and we use the formulation from \cite{wieden} as in $\EGO$ below.  In \cite{royden1}, one finds a weaker version $\EGO^{-}$, involving convergence \emph{everywhere} in the antecedent.  
\begin{princ}[$\EGO$]
Let $f_{n} : (I\times \N) \di \R$ be a sequence of measurable functions converging almost everywhere to measurable $ f :  I\di \R$, and let $\eps>0$. Then there is measurable $E\subset I$ of measure $1-\eps$ with $f_{n}$ converging uniformly to $f$ on $ E$.
\end{princ}
Let $\EGO_{\RM}^{-}$ be $\EGO^{-}$ with the extra assumption that $E$ is RM-closed.  
\begin{thm}[$\ACAo$]\label{egotripke}
We have $\EGO^{-}\asa \EGO_{\RM}^{-}$ and $\EGO^{-}\di \WHBU_{\m}$.  
\end{thm}
\begin{proof}
For the equivalence, proceed as in the first part of Theorem \ref{kingmatti}. 

\smallskip

For the second part, fix $\eps>0$ and measurable $\Psi:I\di \R^{+}$.  Apply $\QFAC^{1,0}$ to $(\forall x\in I)(\exists n\in \N)(\Psi(x)>\frac{1}{2^{n}}  )$ and let the resulting functional be named $\Phi$.  
Using $\exists^{2}$, one readily guarantees that $\Phi$ satisfies $x=_{\R} y \di \Phi(x)=_{0}\Phi(y)$ for $x, y\in \R$.
Now define $\Phi_{n}(x):= \min(\Phi(x), n)$.  By definition, we have that $\Phi_{n}$ converges to $\Phi$ on $I$. 
Let $E$ be as in $\EGO^{-}_{\RM}$, i.e.\ 
\be\label{uni}
(\forall \eps'>0)(\exists N\in \N)(\forall x\in E)(\forall n\geq N)(|\Phi_{n}(x)-\Phi(x)|<\eps'). 
\ee
For $\eps'=1$, take $N_{0}$ as in \eqref{uni}, and note that for $x\in  E$, we have $\Psi(x)\geq \frac{1}{2^{N_{0}}}$, by the definition of $\Phi$.
Applying $\WHBU^{-}_{\RM}$ for $\eps=\frac{1}{2^{N_{0}}}$ finishes the proof. 
\end{proof}
In conclusion, the Lusin and Egorov theorems imply $\WHBU_{\m}$ while the exact formulation of the former does not matter that much. 
In particular, the (comparative or virtual) definition of the Lebesgue measure as in Remark \ref{dreft} essentially `hard-codes' Lusin's first principle, 
allowing us to `bootstrap' e.g.\ $\LUS$ to $\LUS_{\RM}$.

\subsubsection{Littlewood-like principles}\label{datsnijdtkleinhout}
The literature contains a number of `Littlewood-like' principles, i.e.\ statements similar to Littlewood's three principles.  
We study two examples and sketch how they also imply $\WHBU_{\m}$. 

\smallskip

First of all, we study $\textsf{LLP}$, which is the (part of the) `fourth' Littlewood principle from \cite{deigo1} and one of Tao's `{Littlewood-like principles}' from \cite{taomes}*{Ex.\ 1.3.25}.  
\begin{princ}[$\textsf{LLP}$]
For measurable $f:\R \di \R$ and $\eps>0$, there exists a measurable set $K\subseteq I$ such that $\blambda(I\setminus K)<\eps$ and $f$ is bounded on $K$.
\end{princ}
Let $\textsf{\textup{LLP}}_{\RM}$ be $\LLP$ where $K$ is addtionally RM-closed. 
One proves the following theorem in the way as in the previous section.
\begin{thm}[$\ACAo$]
We have $\textsf{\textup{LLP}}\asa \textsf{\textup{LLP}}_{\RM}$ and $\textsf{\textup{LLP}} \di\WHBU_{\m}$.  
\end{thm}
\begin{proof}
Consider the function $\Phi$ from the proof of Theorem \ref{egotripke}. 
Then apply $\WHBU^{-}$ for the resulting set.  
\end{proof}

Secondly, we say that $f_{n}$ converges to $f$ \emph{in measure} on $X$, if for every $\eps > 0$ and $k\in \N$, there is $N\in \N$ such that for $n\geq N$, the measure of  $\{x \in  X : |f_n(x) - f(x)| \geq \eps\}$ is at most $\frac{1}{2^{k}}$.
The following theorem is called \emph{slightly weaker than Egoroff's theorem} by Royden in \cite{royden1}*{p.~72}, and connects the previous notion to pointwise convergence. 
\begin{princ}[\textsf{WTE}]
Let $f_{n},f $ be measurable and such that $(f_{n}(x))_{n\in \N}$ converges to $f(x)$ for almost all $x\in I$.  Then $(f_{n})_{n\in \N}$ converges to $ f$ in measure.
\end{princ}
\begin{thm}[$\ACAo$]
We have $\textsf{\textup{WTE}}\di \WHBU_{\m}$.  
\end{thm}
\begin{proof}
Fix $1>\eps_{0}>0$ and measurable $\Psi:I\di \R^{+}$.  Let $\Phi$ be as in the proof of Theorem \ref{egotripke} and define $\Phi_{n}(x):= \min(\Phi(x), n)$.  By definition, we have that $\Phi_{n}$ converges to $\Phi$ on $I$. 
Use $(\exists^{2})$ to define $E_{n}:=\{x \in  I : |\Phi_n(x) - \Phi(x)| \geq \eps_{0}\}$.
By \textsf{WTE}, for any $k\in \N$, there is $N\in \N$ such that the measure of $E_{n}$ is at most $1/2^{k}$ for $n\geq N_{0}$. 
For $x\in (I\setminus E_{N_{0}})$, $|\Phi_{N_{0}}(x)-\Phi(x)|<1$ implies $\Psi(x)\geq \frac{1}{2^{N_{0}+1}}$.
As for Theorem \ref{kingmatti}, $\WHBU_{\m}$ follows, and we are done.  Note that we need to use the first part of the proof of Theorem \ref{kingmatti} to obtain RM-closed sets. 
\end{proof}
Finally, we discuss how the well-known convergence theorems associated to the Lebesgue integral imply $\WHBU_{\m}$.
Unfortunately, establishing this result requires a non-trivial integration theory, as provided by e.g.\ the system $\ACAo+(\blambda_{0})$ from Section~\ref{onderkruipers}, \emph{and} also 
lots of technical details.  Hence, we merely sketch these results. 
\begin{rem}\label{conv}\rm
First of all, the Lebesgue integral constitutes a generalisation of the Riemann integral; one of the advantages of the former is the superior treatment of limits of integrals.  
In particular, the \emph{dominated} (resp.\ \emph{montone}) convergence theorem implies that pointwise convergence (ae) implies convergence of the associated integrals, \emph{assuming} the sequence is dominated by an integrable function (resp.\ the sequence is non-negative and monotone).  

\smallskip

Secondly, assuming the dominated convergence theorem, one derives $\WHBU_{\m}$ as in the following proof sketch.  
We implicitly use a number of properties of the Lebesgue measure and integral, provable in $\ACAo+(\blambda_{0})$ from Section~\ref{onderkruipers}, it seems.
\begin{proof} (Sketch)
Fix $\Psi:I\di \R^{+}$ and $\eps>0$ as in $\WHBU_{\m}$. 
Use $(\exists^{2})$ to define $f_{n}:I\di \R$ as $1$ if $\Psi(x)<_{\R}\frac{1}{2^{n}}$, and $0$ otherwise.  
Let $A_{n}$ be the subset of $[0,1]$ represented by $f_{n}$.  Since $(\forall n\in \N)(\forall x\in I)(f_{n}(x)\leq 1_{\R})$ and $f_{n}$ converges to $ f:= 1_{\R}$ pointwise on $I$.  We now have $\lim _{n\to \infty }\int _{I}|f_{n}-f|\,d\blambda =0$.  
By definition, we obtain $\lim_{n\di \infty} \blambda (A_{n})=1$.
Now let $N_{0}$ be such that $|1-\blambda(A_{n})|<\eps$ for $n\geq N_{0}$ and note that we may assume $\eps>\frac{1}{2^{N_{0}}}$ (just take a larger number if necessary).  
Then for $x\in I\setminus A_{N_{0}}$, we have $\Psi(x)\geq \frac{1}{2^{N_{0}}}$.
As for Theorem \ref{egotripke}, $\WHBU_{\m}$ now follows. 
\end{proof}
Thirdly, assuming the monotone convergence theorem, one derives $\WHBU_{\m}$ as in the following proof sketch.  
We implicitly use a number of properties of the Lebesgue measure and integral, provable in $\ACAo+(\blambda_{0})$ from Section~\ref{onderkruipers}, it seems.
\begin{proof} (Sketch)
Fix $\Psi:I\di \R^{+}$ and $\eps>0$ as in $\WHBU_{\m}$. 
Use $(\exists^{2})$ to define $f_{n}:I\di \R$ as $1$ if $\Psi(x)<_{\R}\frac{1}{2^{n}}$, and $0$ otherwise.  
Let $A_{n}$ be the subset of $[0,1]$ represented by $f_{n}$.  Since $(\forall n\in \N)(\forall x\in I)(0\leq f_{n}(x)\leq f_{n+1}(x)\leq 1_{\R})$ and $f_{n}\di f:= 1_{\R}$ pointwise everywhere, we have $\lim _{n\to \infty }\int _{I}|f_{n}-f|\,d\blambda =0$.  
By definition, we obtain $\lim_{n\di \infty} \blambda (A_{n})=1$.
Now let $N_{0}$ be such that $|1-\blambda(A_{n})|<\eps$ for $n\geq N_{0}$ and note that we may assume $\eps>\frac{1}{2^{N_{0}}}$ (just take a larger number if necessary).  
Now proceed as in the previous proof sketch. 
\end{proof}
\end{rem}
In conclusion, numerous fundamental (approximation) results from measure theory imply $\WHBU_{\m}$, sometimes over a non-trivial base theory.  
The development of measure theory even seems to go `hand in hand' with the approximation theory.

\subsection{About and around Kreuzer's measure theory}\label{onderkruipers}
\subsubsection{A measure of motivation}\label{motig}
The system of (Lebesgue) measure theory from \cite{elkhuisje} is introduced in Section~\ref{kuiken}. 
This system is $\ACA_{0}^{\omega}$ extended with the axiom $(\blambda)$ introducing an extension of the Lebesgue measure.
This system is \emph{non-classical}, as $(\blambda)$ implies that all subsets of the Cantor space are measurable, like in e.g.\ \cite{solo}.   
We list the main properties of Kreuzer's approach as follows. 
\begin{enumerate}
\renewcommand{\theenumi}{\roman{enumi}}
\item By \cite{elkhuisje}*{Theorem~3}, the axiom $(\blambda)$ gives rise to a $\Pi_{2}^{1}$-conservative extension of $\ACA_{0}^{\omega}$, i.e.\ the non-classical consequences of $(\blambda)$ are limited.\label{dorfu}  
\item By Theorem \ref{expla}, the Heine-Borel theorem $\HBU_{\c}$ for Cantor space is equivalent to its restriction to measurable functionals.  In this light, the assumption that all subsets of Cantor space are measurable seems innocent if we are interested in the study of compactness (and related notions).  
\item The system $\ACA_{0}^{\omega}+(\blambda)$ does not prove the Heine-Borel theorem for $2^{\N}$ (Corollary \ref{kalak}) but does prove Egorov's theorem (Theorem \ref{egotripke2}). 
\item No functional $\Theta$ as in $\SFF(\Theta)$ is computable in Kreuzer's Lebesgue measure $\blambda$ and Feferman's $\mu$ (Theorem \ref{hungy}). 
\end{enumerate}
In Section \ref{lusin}, we have derived $\WHBU_{\m}$ from Egorov's theorem.  
\subsubsection{A measure of measure theory}\label{kuiken}
We introduce the system of measure theory from \cite{elkhuisje} as in the axiom $(\blambda)$ below.

\smallskip

First of all, the system from \cite{elkhuisje} defines the Lebesgue measure on subsets of Cantor space, i.e.\ we 
need to represent such sets in $\RCAo$, as the latter is officially a type theory.  
Now, as noted in item \eqref{strijker} of Definition \ref{keepintireal}, we code sets as characteristic functions. 

\smallskip

To be absolutely clear, a set $Y\subseteq \N$ is given by a function $f_{Y}^{1}$, and we write `$n^{0}\in Y$' for $f_{Y}(n)=0$. 
There are a number of ways of representing subsets of $2^{\N}$, and we follow Kreuzer's approach from \cite{elkhuisje}*{\S2}.
Define $\sg^{1}$ as $\sg(0)=0$ and $\sg(k) = 1$ for $k > 0$. $\sg^{1 \rightarrow 1}(g)(n): = \sg(g(n))$, which we also denote as $\sg$ when there can be no confusion, maps $\N^{\N}$ to $2^{\N}$, 
and dispenses with a lot of notation.  Indeed, a set `$X\subseteq 2^{\N}$' is then given by a functional $F^{2}_{X}$, and we write `$g^{1}\in X$' in case $F_{{X}}(\sg(g))=0$, i.e.\ we quantify over Baire space but always work `modulo $\sg$', as also expressed by the last line of the axiom $(\blambda)$ just below.  For a sequence $X^{0\di \tau}$, we use $X_{n}$ to denote the $n$-th element of that sequence, as usual.    
 
\smallskip

Secondly, the system from \cite{elkhuisje} is\footnote{Note that Kreuzer's definition of $\ACAo$ in \cite{elkhuisje} is unfortunately different from ours.  In this paper, we exclusively use the definition $\ACAo\equiv \RCAo+(\mu^{2})$ from Section \ref{prelim2}.} then $\ACA_{0}^{\omega}$ extended with the axiom $(\blambda)$: 
\[
(\exists \pmb{\lambda}^{3})
\left(\begin{array}{l}
~~(\forall X^{2})(\blambda(X)\geq 0)\wedge (\forall f\in C)(X(f)\ne 0)\di (\blambda(X)=0)\\
\wedge (\forall X^{0\di 2})\big(\blambda(\cup_{i\in \N}X_{i}')=_{\R}\sum_{i=0}^{\infty}\blambda(X_{i}')\big) \\
\wedge (\forall s^{0^{*}})\big(\blambda([s])=_{\R}2^{-|s|}\big) \\
\wedge (\forall X^{2})\big( \blambda(X)=_{\R}\blambda(\lambda f^{1}.\sg(X(f)))=_{\R}  \blambda(X(\lambda n^{0}.\sg(f(n)) )) \big)
\end{array}\right),
\]
where $X_{i}' := X_{i} \setminus \cup_{j<i} X_{j}'$.
 The last line of $(\blambda)$ indicates that $\blambda$ is compatible with our coding of subsets of $2^{\N}$.
By \cite{elkhuisje}*{Theorem~3}, $\ACA_{0}^{\omega}+(\blambda)$ is a $\Pi_{2}^{1}$-conservative extension of $\ACA_{0}^{\omega}$.  Similar to \cite{elkhuisje}, we will make use of a Skolem constant $\blambda$ added to the language.  
As is well-known, $2^{\N}$ and $[0,1]$ are measure-theoretically equivalent.  Indeed, $\r(\alpha):=\sum_{n=0}^{\infty}\frac{\alpha(i)}{2^{n+1}}$ is a measure-preserving surjection, and $(\blambda)$ thus defines a measure on $[0,1]$.
In a strong system like $\ZFC$, we can prove that this measure  will be identical to the Lebesgue measure on the class of Lebesgue-measurable sets.

\smallskip

Thirdly, $(\blambda)$ is `non-classical' in nature as it implies that \emph{all} subsets of $2^{\N}$ are measurable.  
Indeed, the Axiom of Choice \textsf{AC} is known to yield non-measurable sets (see e.g.\ \cite{taomes}*{\S1.2.3}) if we require the measure to be countably additive \emph{and translation-invariant}.  
Due to the absence of the latter requirement, $(\blambda)$ taken as a statement in $\ZFC$ does not violate \textsf{AC}, but it implies large cardinal\footnote{The axiom $(\blambda)$ implies there is a weakly inaccessible cardinal below the continuum, namely a weakly Mahlo cardinal. In particular, the assumption violates the axiom \textsf{V = L}.} axioms.  

\subsubsection{Some results}
We establish some results in and about Kreuzer's framework.

\smallskip

First of all, whether or not all subsets of $2^{\N}$ are measurable turns out not to have an influence on open-cover compactness. Indeed, by Theorem~\ref{expla}, Heine-Borel compactness as in $\HBU_{\c}$ does not really change if we restrict to measurable functionals.
In this light, assuming that all subsets of $2^{\N}$ are measurable does not really change the strength of $\HBU_{\c}$,  i.e.\ one should feel free to use this assumption when studying compactness, like in the form of $(\blambda)$.  
\begin{thm}\label{expla}
Given $(\exists^2)$, the theorem $\HBU_\c$ follows from the restriction of $\HBU_\c$ to measurable functionals.  
\end{thm}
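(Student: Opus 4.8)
The plan is to reduce an arbitrary $G^{2}$ to a measurable functional to which the restricted principle applies, and then transfer the resulting finite sub-cover back to $G$. Two elementary observations drive the argument. First, \emph{monotonicity}: if $G'(f)\geq G(f)$ for all $f$, then $[\overline{f}G'(f)]\subseteq [\overline{f}G(f)]$, so any finite list $\langle f_{1},\dots,f_{k}\rangle$ witnessing $\HBU_{\c}$ for $G'$ automatically witnesses it for $G$. Second, a \emph{clopen--null observation}: a finite union of basic neighbourhoods $[\sigma]$ that has full $\blambda$-measure must equal all of $2^{\N}$, since its complement is clopen and null, hence empty. The second observation is what lets me upgrade ``covers up to measure zero'' to ``covers'', and it is available from $(\blambda)$ together with $(\exists^{2})$.

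The heart of the matter is the construction of a measurable $G'$ that agrees with $G$ off a null set. Using $(\exists^{2})$ I form the level sets $A_{m}:=\{f\in 2^{\N}: G(f)=m\}$, which $(\blambda)$ renders measurable; inner regularity then yields coded ($F_{\sigma}$) sets $\widehat{A}_{m}\subseteq A_{m}$ with $\blambda(A_{m}\setminus \widehat{A}_{m})=0$. Setting $D:=\bigcup_{m}\widehat{A}_{m}$ produces a \emph{conull, coded} set on which $G$ is coded-measurable (it equals $m$ on $\widehat{A}_{m}$), with coded null remainder $N:=2^{\N}\setminus D$. I define $G'$ to equal $G$ on $D$ and to be chosen measurably on $N$; then $G'$ is a genuine measurable functional with $G'=G$ $\blambda$-almost everywhere, so the restriction of $\HBU_{\c}$ applies to it.

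Applying that restriction to $G'$ produces a finite sub-cover $\langle f_{1},\dots,f_{k}\rangle$. The witnesses lying in $D$ satisfy $[\overline{f_{i}}G'(f_{i})]=[\overline{f_{i}}G(f_{i})]$ and are therefore honest $G$-neighbourhoods; the goal is to show these already cover a conull set, whence by the clopen--null observation they cover all of $2^{\N}$ and furnish the desired finite sub-cover for $G$. I expect the main obstacle to be exactly the witnesses falling in the null remainder $N$: there $G'$ need not dominate $G$ and the associated neighbourhoods are not $G$-neighbourhoods, so a single badly placed witness contributes a positive-measure clopen set that the good witnesses need not cover. Overcoming this is the crux. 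I would attack it by prescribing the behaviour of $G'$ on $N$ so that the region left uncovered by the good witnesses is itself null (and clopen, hence empty by the clopen--null observation) — arranging, for instance, that the neighbourhoods attached to $N$ aggregate to arbitrarily small measure — and, should a one-shot argument fail, by a finite iteration that absorbs the successive null residues through secondary applications of the restricted principle. This localisation of the entire difficulty to a measure-zero set, made possible by $(\blambda)$ and inner regularity, is what ultimately makes the measurable case of $\HBU_{\c}$ as strong as the general one.
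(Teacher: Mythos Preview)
Your approach has a real gap, and you identify it yourself: after applying the restricted $\HBU_{\c}$ to $G'$, the finitely many witnesses landing in the null set $N$ may contribute clopen neighbourhoods of \emph{positive} measure that the ``good'' witnesses in $D$ need not cover. Your proposed fixes do not close this. Even if you arrange $G'$ on $N$ so that $\bigcup_{f\in N}[\overline{f}G'(f)]$ has measure below some preassigned $\varepsilon$ (which is feasible, since $N$ is $G_{\delta}$ null), the good witnesses then cover a clopen set of measure merely $>1-\varepsilon$, not a conull set, so your clopen--null observation does not fire. An iteration does not obviously terminate: each pass leaves a clopen residue, and re-running the construction on that residue reproduces the same difficulty. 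You also lean on $(\blambda)$ and inner regularity, whereas the hypothesis is only $(\exists^{2})$.

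The paper avoids the whole issue with a different reduction. Rather than altering $G$ off a null set, it \emph{embeds} Cantor space as a measure-zero compact copy $C^{0}\subset C$ (the set of $\langle 0^{\omega},f\rangle$ under the even--odd pairing) and defines a new functional $F_{0}$: on $C^{0}$ it encodes $G$ via the homeomorphism $C^{0}\cong C$, and for $h\notin C^{0}$ it returns the least $n$ with $[\bar h n]\cap C^{0}=\emptyset$. This makes $F_{0}$ continuous off the null set $C^{0}$, hence measurable, and by design the neighbourhood $[\bar h F_{0}(h)]$ for $h\notin C^{0}$ is \emph{disjoint} from $C^{0}$. Consequently, in any finite sub-cover produced by the restricted $\HBU_{\c}$ for $F_{0}$, the covering of $C^{0}$ comes entirely from witnesses in $C^{0}$; pulling those back through the homeomorphism yields the desired finite sub-cover for $G$. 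The key idea you are missing is to push the arbitrary behaviour of $G$ into a null \emph{compact} set that the ``measurable part'' of the new functional is forbidden from touching, so there is no leakage between good and bad witnesses.
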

\begin{proof}
Fix $f,g \in C$ and define $h = \langle f,g\rangle $ by $ h(2n) = f(n)$ and $h(2n+1) = g(n)$.
Let $C^0$ be the set of $\langle \lambda x^{0}.0,f\rangle $ such that $f\in  C$. Then $C^0$ is a compact subset of $C$ of measure $0$, so every total function that is continuous outside $C^0$ will be measurable. Moreover $C^0$ is homeomorphic to $C$.
Let $F:C \rightarrow \N$ be arbitrary. Define $F_{0}(\langle \lambda x.0,f\rangle ) = 2F(f) + 1$, and define  $F_{0}(h) = n$ for the least $n$ such that $C_{\bar h(n)}$ is disjoint from $C^0$ if $h$ is not in $C^0$.
Then $F_{0}$ is measurable, and if we apply $\HBU_\c$ to $F_{0}$, $C^0$ can only be covered by $C_{\bar h(F^0(h))}$ for $h \in C^0$, and by the homeomorphism, we obtain a finite sub-cover of the cover of $C$ induced by $F$.
\end{proof}
In hindsight, the previous theorem is not that surprising: the Axiom of Choice is not needed to prove $\HBU_{\c}$ (see \cite{dagsamV}*{\S4.1}): the latter is provable in $\Z_{2}^{\Omega}$.  
Since the existence of non-measurable sets is intimately connected to the Axiom of Choice, it stands to reason the latter has no influence on $\HBU_{\c}$.  Since special fan functionals compute realisers for $\HBU_{\c}$, 
we expect the following complimentary result, where $\textsf{LMC}(\blambda)$ is $(\blambda)$ without the leading existential quantifier.  
\begin{thm}\label{hungy}
No functional $\Theta$ as in $\SFF(\Theta)$ is computable in any $\blambda$ as in $\textup{\textsf{LMC}}(\blambda)$ and Feferman's $\mu$. 
\end{thm}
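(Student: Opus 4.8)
The plan is to argue by contradiction, using two ingredients: the fact from \cite{dagsamIII, dagcie18} that a $\Theta$-functional together with $\exists^{2}$ computes a realiser for $\ATR_{0}$, and the tameness of $(\blambda)$ expressed by Kreuzer's conservation theorem \cite{elkhuisje}*{Theorem~3}. Fix a $\blambda$ with $\textsf{LMC}(\blambda)$ and suppose, towards a contradiction, that some $\Theta$ satisfying $\SFF(\Theta)$ is computable (S1-S9) in $\blambda$ and $\mu$. As $\exists^{2}$ is trivially computable in $\mu$, the cited result yields a realiser $R$ for $\ATR_{0}$ that is itself computable in $\blambda$ and $\mu$. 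The task is thus to show that an $\ATR_{0}$-realiser computable in $\blambda,\mu$ is incompatible with $(\blambda)$ being $\Pi_{2}^{1}$-conservative over $\ACA_{0}^{\omega}$.

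To make the conservation result bite, I would pass to an $\omega$-model. The key point is that Kreuzer's theorem is, at heart, a \emph{measure-expansion}: a second-order $\omega$-model of $\ACA_{0}$ can be endowed with a totally defined, countably additive $\blambda$ without increasing its second-order strength. I would choose as the underlying second-order model the hyperarithmetical sets $\HYP$, which already contain every real computable in $\mu$ (so that $\mu$ is present), form a model of $\ACA_{0}^{\omega}$ and even of $\Delta^{1}_{1}$-comprehension, yet fail $\Sigma_{1}^{1}$-separation and hence are \emph{not} a model of $\ATR_{0}$. Expanding $\HYP$ by such a tame measure and closing under S1-S9 produces a type structure $\mathcal{M}\models \ACA_{0}^{\omega}+(\blambda)$ containing $\blambda$ and $\mu$, whose type-$1$ part $M_{2}$ remains a model of $\ACA_{0}^{\omega}$ that does not satisfy $\ATR_{0}$.

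The contradiction then follows. Under our assumption, the computation witnessing $\Theta\le_{\textup{S1-S9}}(\blambda,\mu)$ places $\Theta$, and hence the realiser $R$, inside $\mathcal{M}$. Since $\mathcal{M}$ is closed under S1-S9, feeding $R$ the well-orderings and arithmetical operators coded by reals of $M_{2}$ returns the corresponding transfinite iterations as reals of $M_{2}$; that is, $M_{2}$ is closed under arithmetical transfinite recursion and so $M_{2}\models \ATR_{0}$, contradicting the choice of $M_{2}$. Consequently no $\Theta$ can be computable in $\blambda$ and $\mu$; and since the S1-S9 procedure was arbitrary and used only the data permitted by $\textsf{LMC}(\blambda)$, a procedure computing $\Theta$ from \emph{every} measure satisfying $\textsf{LMC}(\blambda)$ would in particular do so for the tame $\blambda$ above, which is exactly what we have excluded.

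I expect the main obstacle to lie in the passage from the \emph{syntactic} $\Pi_{2}^{1}$-conservation of \cite{elkhuisje} to the \emph{semantic} measure-expansion used in the second step: one must produce a $\blambda$ defined on all subsets of $2^{\N}$ lying in the model, countably additive \emph{within} the model and taking values in $M_{2}$, so that closing under S1-S9 adds no reals strong enough to resurrect $\ATR_{0}$ and so that $\mathcal{M}$ genuinely validates all four clauses of $(\blambda)$. A secondary point to secure is that the term realising ``$\Theta$ and $\exists^{2}$ compute an $\ATR_{0}$-realiser'' behaves uniformly across $\omega$-models, so that $R\in\mathcal{M}$ really forces $M_{2}\models\ATR_{0}$ rather than merely holding in the full type structure. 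Both should follow from the robustness of Kreuzer's construction and of the realiser term, but they are where the real work sits.
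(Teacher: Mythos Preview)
There is a genuine quantifier slip that breaks the argument. The theorem requires that for \emph{every} $\blambda$ satisfying $\textsf{LMC}(\blambda)$, no $\Theta$ is computable in that $\blambda$ and $\mu$. You fix such a $\blambda$ and an index $e$ with $\{e\}(\blambda,\mu)=\Theta$, but then pass to a model $\mathcal{M}$ built around a \emph{different}, ``tame'' measure. There is no reason the index $e$ computes a $\Theta$-functional, or even terminates, when fed the tame measure in $\mathcal{M}$: S1-S9 computability in one oracle does not transfer to another oracle merely because both satisfy the same first-order specification. Your closing remark that the procedure ``used only the data permitted by $\textsf{LMC}(\blambda)$'' and hence works for every measure is precisely the uniformity that must be proved, not assumed. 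A secondary gap is that ``closing under S1-S9'' over $\HYP$ together with a $\blambda$ and then asserting the type-$1$ part stays inside $\HYP$ is essentially the content of the theorem itself; Kreuzer's conservation result is syntactic and does not by itself control which reals appear after closing under higher-type computation with a type-$3$ oracle.

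The paper's proof sidesteps both issues by exploiting a structural fact true of \emph{every} $\blambda$ with $\textsf{LMC}(\blambda)$: on Borel sets, $\blambda$ is forced to coincide with Lebesgue measure, and the measure of a coded Borel set is computable from the code using $\mu$ alone. Given a putative computation $\{e\}(\blambda,\mu,F)$ with $F$ arithmetical in some $f^{1}$, the recursion theorem for S1-S9 lets one replace each oracle call $\blambda(X)$ by the $\mu$-computable Lebesgue measure of $X$ (since every such $X$ arising in the computation is Borel with a code generated inside the computation). Hence $\Theta(F)$ would be hyperarithmetical in $f$, contradicting the known existence of an arithmetical $F$ for which any $\Theta(F)$ is non-hyperarithmetical. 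This argument is uniform in $\blambda$ because it only uses the values of $\blambda$ on Borel sets, which are determined by $\textsf{LMC}(\blambda)$.
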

\begin{proof}
There is a partial functional of type $1 \di 1$ computable in $\mu$ which to a code for a Borel-subset $B$ of $C$ computes a binary representation of $\blambda(B)$, see \cite{Sacks.high}*{Section IV.1} or \cite{dagsamII}*{Prop.\ 3.23}. 
If $\Theta$ were computable in $\blambda$, we could use this and the recursion theorem (for S1-S9) to show that whenever we have an index $e$ for computing a functional $F$ from $\mu$ and some $f^1$, we can find a value of $\Theta(F)$ computable uniformly from $e$ and $f$. 
Since there is arithmetical $F$ such that $\Theta(F)$ cannot be hyperarithmetical (see \cite{dagsam, dagsamII}), this is impossible. Hence, no $\Theta$ is computable in $\blambda$ and $\mu$.
%
\end{proof}
Secondly, we now show that \emph{Heine-Borel} compactness as in $\HBU_{\c}$ is not provable from in Kreuzer's framework. 
\begin{cor}\label{kalak}
The system $\ACAo+(\blambda)$ cannot prove $\HBU_{\c}$. 
\end{cor}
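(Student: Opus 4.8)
The plan is to argue by contradiction and to \emph{extract} a realiser for $\HBU_{\c}$ from a hypothetical proof, thereby contradicting Theorem~\ref{hungy}. So suppose $\ACAo+(\blambda)\vdash \HBU_{\c}$. The first thing I would note is that $\HBU_{\c}$ has the logical shape $(\forall G^{2})(\exists w^{1^{*}})A(G,w)$, where $A(G,w)$ is the formula $(\forall f^{1}\leq_{1}1)(\exists i<|w|)(f\in[\overline{w_{i}}G(w_{i})])$ asserting that the finite sequence $w$ induces a finite sub-cover of $2^{\N}$ relative to $G$. Since the body after $\forall f$ is decidable given $G$, the matrix is $\Pi_{1}^{0}$ in $G,w$; as $(\exists^{2})$ is available in $\ACAo$, the whole of $A(G,w)$ is decidable relative to $\mu$, i.e.\ quantifier-free once $\mu$ is treated as a constant of the language. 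Observe also that the Skolem witness for the leading $\exists w$ in $\HBU_{\c}$ is exactly a $\Theta$-functional: for any $t$ with $(\forall G^{2})A(G,t(G))$ one has $\SFF(t)$ verbatim.

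The core step is a term-extraction (functional interpretation) argument. I would treat $\blambda$ as a Skolem constant governed by $\textup{\textsf{LMC}}(\blambda)$ and exploit that the latter is, modulo $\exists^{2}$, purely universal in its higher-type quantifiers: each conjunct has the form $(\forall X)[\dots]$ with an arithmetical matrix, the only existential higher-type quantifier being the leading $\exists\blambda$ that we have stripped off. The same holds for $(\mu^{2})$ once $\mu$ is a constant. Universal axioms are interpreted by themselves under the Dialectica-style interpretation of $\RCAo$-based systems, so $\ACAo+(\blambda)$ admits such an interpretation with witnessing terms drawn from G\"odel's system $T$ extended by the constants $\mu$ and $\blambda$. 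Applying this to the $\Pi_{2}$-consequence $(\forall G)(\exists w)A(G,w)$ yields a closed term $t[\mu,\blambda]$ with $\ACAo+(\blambda)\vdash(\forall G^{2})A(G,t(G))$, so that $t$ literally satisfies $\SFF(t)$. But then $t$ is a $\Theta$-functional that is computable (in the sense of S1--S9) from $\mu$ and $\blambda$, which is exactly what Theorem~\ref{hungy} forbids. An alternative packaging passes to the canonical model whose objects are the functionals computable from $\blambda$ and $\mu$: this models $\ACAo+(\blambda)$ and hence $\HBU_{\c}$, and one then needs $\HBU_{\c}\to(\exists\Theta)\SFF(\Theta)$ to produce $\Theta$ inside the model, i.e.\ computable from $\blambda,\mu$.

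The hard part is precisely this passage from the provability (or pointwise validity) of $\HBU_{\c}$ to the existence of a \emph{single, uniform} realiser $\Theta$ computable from $\blambda,\mu$. Theorem~\ref{hungy} only rules out a uniform total $\Theta$; it is entirely possible that for each individual $G$ some finite sub-cover is (non-uniformly) computable from $\blambda,\mu$. Consequently a naive model argument---hoping that the type-$1$ objects computable from $\blambda,\mu$ are too sparse to contain any sub-cover---should not be expected to work, since the recursion-theorem content of Theorem~\ref{hungy} shows the obstruction is genuinely one of \emph{uniformity}. This is why I favour the functional-interpretation route, where Skolemisation produces the uniform witness $t$ automatically; the equivalence $\HBU\asa(\exists\Theta)\SFF(\Theta)$ over $\RCAo+\QFAC^{2,1}$ from Section~\ref{prelim2} is the model-theoretic shadow of the same phenomenon, but carries the hypothesis $\QFAC^{2,1}$ that the extraction sidesteps.

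Finally, I would account for the qualifier ``if it is consistent''. Since $(\blambda)$ carries large-cardinal strength, the contradiction with Theorem~\ref{hungy}---which speaks about genuine functionals $\blambda$ satisfying $\textup{\textsf{LMC}}(\blambda)$ in the $\ZFC$ metatheory---only bites when such a $\blambda$ actually exists, i.e.\ when $\ACAo+(\blambda)$ admits a (standard) model witnessing an actual measure functional. Hence the conclusion is stated conditionally on consistency.
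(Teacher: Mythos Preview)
Your approach---extract a full $\Theta$-functional via functional interpretation and invoke Theorem~\ref{hungy}---is plausible in outline, but it has a real gap, and you have dismissed precisely the argument the paper actually uses.

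The gap is the type-$3$ term extraction. Kreuzer's conservation result and its extraction procedure are stated only for $\Pi^1_2$-sentences: they produce terms of type $1\to 1$, and the extracted term lives in $T_0[\mu]$ \emph{without} $\blambda$. Your argument needs extraction at type $2\to 1^*$ with $\blambda$ as a constant. The claim that $\textsf{LMC}(\blambda)$ is ``purely universal'' so that ``universal axioms are interpreted by themselves'' skips over the well-known obstacle that full extensionality at types $\geq 2$---which $\RCAo$ includes---has no Dialectica interpretation. One would first have to pass to a weakly extensional fragment and argue that the hypothetical proof of $\HBU_\c$ survives there; none of this is automatic, and you do not address it.

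More importantly, your third paragraph misdiagnoses the situation. You say a ``naive model argument---hoping that the type-$1$ objects computable from $\blambda,\mu$ are too sparse to contain any sub-cover---should not be expected to work'' because the obstruction is one of uniformity. But this is exactly what the paper does, and it \emph{does} work: there is a specific \emph{arithmetical} $F_0$ such that no finite sub-cover of its canonical cover is hyperarithmetical. Kreuzer's Lemma~7 shows that the term model $T_0[\mu]$ satisfies $(\blambda)$, and its type-$1$ objects are hyperarithmetical; hence $F_0$ is in the model but no sub-cover is, so $\HBU_\c$ fails there. No uniform realiser is needed---one bad $F_0$ suffices. The paper's second argument is the extraction version of the same idea: restrict $\HBU_\c$ to this fixed $F_0$, obtaining a genuine $\Pi^1_2$-sentence (after replacing the inner $\forall f\in C$ by a numerical quantifier), and then Kreuzer's extraction yields a hyperarithmetical sub-cover, a contradiction. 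By fixing $F_0$ first, the paper avoids type-$3$ extraction entirely.

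Finally, your reading of ``if it is consistent'' is off. This is the routine caveat: an inconsistent theory proves everything. The paper's model is built in $\ZFC$ with no large-cardinal input; the large-cardinal content of $(\blambda)$ as a set-theoretic statement plays no role.
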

\begin{proof}
Similar to the proof of the theorem, there is arithmetical $F_{0}$ such that for any finite sub-cover $\cup_{i\leq k}[\overline{f_{i}}F_{0}(f_{i})]$ of the canonical cover $\cup_{f\in C}[\overline{f}F_{0}(f)]$, the finite sequence $\langle f_{1}, \dots, f_{k}\rangle$ is not hyperarithmetical (see \cite{dagsam, dagsamII}).  
As in \cite{elkhuisje}, denote by $T_{0}$ the sub-system of G\"odel's system $T$, where primitive recursion is restricted to the recursor $\mathbf{R}_{0}$, 
and let $T_{0}[F]$ be $T_{0}$ extended with the function(al) $F$.  
Then $T_{0}[F]$ satisfies $(\blambda)$ by \cite{elkhuisje}*{Lemma~7}.  
Hence, there is a model of $\ACAo+(\blambda)$ in which $\HBU_{\c}$ is false, as it contains $F_{0}$ but the finite sub-cover for the associated canonical cover is lacking.   

\smallskip

Alternatively, the proof of \cite{elkhuisje}*{Theorem 3} establishes the following term extraction procedure: if for arithmetical $A$, $\ACAo+(\blambda)$ proves $(\forall f^{1})(\exists g^{1})A(f, g)$, then a term $t$ can be extracted from this proof such that $\ACAo$ proves $(\forall f^{1})A(f, t(f, \mu))$.  Note the essential role of $\mu^{2}$ in the conclusion. 
If $\ACAo+(\blambda)$ proves $\HBU_{\c}$, we consider the latter restricted to the arithmetical functional $F_{0}$ from the previous paragraph.  Note that we may replace\footnote{In the case of the unit interval, this means that if we have a finite sub-cover for $[0,1]\cap \Q$, we also have a finite sub-cover for $[0,1]$ by adding the end-points of the intervals of the first covering. 
} the innermost universal  quantifier over $C$ by a numerical quantifier.  Hence, the resulting sentence (modulo some applications of $\mu^{2}$) has the right format for applying the previous term extraction result.  
However, this means we obtain a hyperarithmetical finite sub-cover for the canonical cover corresponding to $F_{0}$, a contradiction.   
\end{proof}
By \cite{elkhuisje}*{Remark 13}, $\FIVE^{\omega}+(\blambda)$ similarly cannot prove $\HBU_{\c}$, and the same for stronger systems.  
A lot of details need to be worked out to establish this result, however.   The crucial part of the previous theorem is that 
$\HBU_{\c}$ restricted to \emph{arithmetically defined} covers already yields non-hyperarithmetical functions (and in fact $\ATR_{0}$; see \cite{dagsam,dagsamII}).  
In Section \ref{CTM}, we do establish that $\FIVE^{\omega}+\WHBU$ cannot prove $\HBU_{\c}$. 

\smallskip

Finally, {Egorov's theorem} can be established in Kreuzer's framework.  
\begin{thm}\label{egotripke2}
The system $\ACAo+(\blambda)$ proves Egorov's theorem as follows:
\begin{center}
let $f_{n} : (I\times\N) \di \R$ be a sequence converging almost everywhere to $ f :  I\di \R$, and let $\eps>0$. Then there is $E\subset I$ such that $\blambda(E)>1-\eps$  and $f_{n}$ converges uniformly to $f$ on $ E$.
\end{center}  
\end{thm}
\begin{proof}
The `usual' proof (see e.g.\ \cite{royden1}*{Ch.\ 3.6}) goes through as follows.  First of all, the Lebesgue measure as in $(\blambda)$ is `continuous from above', i.e.\ we have
\be\label{ergotoko}
[(\forall n\in \N)(E\subseteq E_{n+1}\subseteq E_{n}\subset [0,1])\wedge E=\cap_{n\in \N}E_{n} ]\di  \blambda(E)=\lim_{n\di \infty} \blambda(E_{n}).  
\ee
Indeed, the proof of \eqref{ergotoko} in e.g.\ \cite{royden1}*{p.\ 63} amounts to nothing more than defining a \emph{disjoint} collection from the $E_{k}$'s (which can be done using $\mu^{2}$), and then applying 
countable additivity, included in $(\blambda)$, to obtain the consequent of \eqref{ergotoko}.  

\smallskip

Secondly, define $E_{{n,k}}:=\cup _{{m\geq n}}\{x\in I : |f_{m}(x)-f(x)|\geq {\frac  1k}\}$ and note that $\blambda(\cap_{n\in \N}E_{n, k})=0$ by the assumption that $f_{n}\di f$ almost everywhere. 
Applying \eqref{ergotoko}, we obtain $\lim_{n\di \infty} \blambda(E_{n,k})=0$, which implies the following:
\be\label{convje}\textstyle
(\forall k\in \N)(\exists N\in \N)(\forall n\geq N)(\blambda(E_{n,k})<\frac{1}{2^{k+1}}).
\ee
Fix $\eps>0$ and use $(\mu^{2})$ in \eqref{convje} to find $g^{1}$ such that $(\forall k\in \N)(\blambda(E_{g(k),k})<\frac{\eps}{2^{k+1}})$ and define $E:=\cup_{k\in \N}E_{g(k),k}$.
It is now straightforward to show that the set $E$ is as required for $\EGO$.  Indeed, countable additivity implies that $\blambda(E)\leq \sum_{n=0}^{\infty}\blambda(E_{g(k), k})\leq \eps$ while for $x\in I\setminus E$, the rate of uniform convergence is $g$.  
\end{proof}
In conclusion, while Kreuzer's framework can establish fundamental results in measure theory like Egorov's theorem, we cannot hope to prove any theorem based on the (uncountable) Heine-Borel theorem by Corollary \ref{kalak}.
As shown in \cite{dagsamIII}, many basic results in third-order arithmetic imply $\HBU$, including the development of the \emph{gauge integral} (see Section \ref{bintro}).  However, 
the latter for absolutely integrable functions is exactly the Lebesgue integral (\cite{bartle1}).  

\subsection{Alternative approaches}\label{GL}
The above results suggest that $\WHBU$ is essential to the development of the measure theory, the Lebesgue measure and integral in particular.  
To assuage any fears that these results depend on our choice of framework, we now consider a \emph{very} different framework for the 
Lebesgue integral, and show that $\WHBU$ is essential there too. 

\smallskip

One alternative framework is the \emph{gauge integral} (\cite{zwette,mullingitover}) restricted to \emph{bounded} functions (Sections \ref{bintro} and \ref{grm}).  
Basic properties of the (general) gauge integral were shown in \cite{dagsamIII}*{\S3.3} to be equivalent to $\HBU$.     
We discuss further applications of our alternative approach in Remark \ref{topent}, including \emph{topological entropy} 

\smallskip

Finally, to drive home the point that $\WHBU$ emerges everywhere in integration theory, we establish the following result in Section \ref{netz}: 
the monotone convergence theorem \emph{for nets} of functions and the Riemann integral implies $\WHBU$.

\subsubsection{Restricting the gauge integral}\label{bintro}
The \emph{gauge integral} is a generalisation of the Lebesgue and improper Riemann integral; it was introduced by Denjoy (in a different from) around 1912 and developed further by Lusin, Perron, Henstock, and Kurzweil (\cite{zwette}).  
The definition of the gauge integral in Definition \ref{GID} is highly similar to the Riemann integral (and simpler than Lebesgue's integral), but boasts a maximal `closure under improper integrals', known as \emph{Hake's theorem} (\cite{bartle1}*{p.\ 195}). 

\smallskip

The aforementioned scope and versatility of the gauge integral comes at a non-trivial `logical' cost: as established in \cite{dagsamIII}*{\S3}, $\HBU$ is equivalent to many basic properties of the gauge integral, including uniqueness.  The additivity of the gauge integral also requires \emph{discontinuous} functions on $\R$, and the resulting system is at the level of $\ATR_{0}$ by \cite{dagsam}*{Cor.\ 6.7} and \cite{dagsamIII}*{Theorem 3.3}.  It is then a natural question if for natural \emph{sub-classes} of functions, a weaker system, e.g.\ at the level of $\ACA_{0}$, suffices to develop the associated \emph{restricted} gauge integral.   

\smallskip

The positive answer to this question starts with a fundamental result, namely that for \emph{bounded} $f$ on bounded intervals, the following are equivalent: $f$ is measurable, $f$ is gauge integrable, and $f$ is Lebesgue integrable (\cite{bartle1}*{p.\ 94}).  Thus, the bounded functions on $[0,1]$ constitute a sub-class with natural properties.  
Furthermore, the Riemann sum of bounded functions is `well-behaved':  the former sum does not vary much\footnote{Using the notions defined in Definition \ref{GID}, if $(\forall x\in I)(|f(x)|\leq d)$ for $d\in \N$, then $S(f, P)$ only varies $\eps /d$ if we change $f$ in an interval $(a,b)\subset I$ such that $|b-a|<\eps$.} if we change the function on a small sub-interval.  
Hence, we may weaken $\HBU$ to only apply to `most' of $I$, which is exactly $\WHBU$: the latter expresses that we have a finite sub-cover of any canonical cover, for `most' of $I$, i.e.\ a subset of measure $1-\eps$ for any $\eps>0$.     

\smallskip

The previous discussion leads to the following definition. 
For brevity, we assume bounded functions on $I$ to be bounded by $1$.  
The crucial and (to the best of our knowledge) new concepts are `$\eps$-$\delta$-fine' and the $L$-integral in items \eqref{hurfo1} and \eqref{lakel22}.
All other notions are part of the (standard) gauge integral literature (see e.g. \cite{mullingitover}).  
\bdefi\label{GI}[Integrals]\label{GID}~   
\begin{enumerate}
\renewcommand{\theenumi}{\roman{enumi}}
\item A gauge on $I\equiv[0,1]$ is any function $\delta:\R\di \R^{+}$.  
\item A sequence $P:=(t_{0}, I_{0}, \dots, t_{k}, I_{k})$ is a \emph{tagged partition} of $I$, written `$P\in \textsf{tp}$', if the `tag' $t_{i}\in \R$ is in the interval $ I_{i}$ for $i\leq k$, and the $I_{i}$ partition $I$.
\item If $\delta$ is a gauge on $I$ and $P=(t_{i}, I_{i})_{i\leq k}$ is a tagged partition of $I$, then $P$ is \emph{$\delta$-fine} if $I_{i}\subseteq [t_{i}-\delta(t_{i}), t_{i}+\delta(t_{i})]$ for $i\leq k$.\label{hurfo}
\item If $\delta$ is a gauge on $I$ and $P=(t_{i}, I_{i})_{i\leq k}$ is a tagged partition of $I$ and $\eps>0$, then $P$ is \emph{$\eps$-$\delta$-fine} if $\cup_{i=0}^{k}\widetilde{I}_{i}$ has measure at least $ 1-\eps$, where 
$\widetilde{I}_{i}$ is ${I}_{i}$ if $I_{i}\subseteq [t_{i}-\delta(t_{i}), t_{i}+\delta(t_{i})]$, and empty otherwise.\label{hurfo1}
\item For a tagged partition $P=(t_{i}, I_{i})_{i\leq k}$ of $I$ and any $f$, the \emph{Riemann sum} $S(f, P)$ is $\sum_{i=0}^{n}f(t_{i})|I_{i}|$, while the \emph{mesh} $\|P\|$ is $\max_{i\leq n}|I_{i}|$.
\item A function $f:I\di \R$ is \emph{Riemann integrable} on $I$ if there is $A\in \R$ such that $(\forall \eps>_{\R}0)(\exists\delta>_{\R}0)(\forall P\in \textsf{tp})(\|P\|\leq_{\R} \delta\di |S(f, P)-A|<_{\R}\eps)$.\label{lakel}
\item A function $f:I\di \R$ is \emph{gauge integrable} on $I$ if there is $A\in \R$ such that
$(\forall \eps>_{\R}0)(\exists\delta:\R\di \R^{+})(\forall P\in \textsf{tp})(\textup{$P$ is $\delta$-fine }\di |S(f, P)-A|<_{\R}\eps)$.\label{lakel2}
\item A \textbf{bounded} $f:I\di \R$ is \emph{L-integrable} on $I$ if there is $A\in \R$ such that
$(\forall \eps>_{\R}0)(\exists\delta:\R\di \R^{+})(\forall P\in \textsf{tp})(\textup{$P$ is $\eps$-$\delta$-fine }\di |S(f, P)-A|<_{\R}\eps)$.\label{lakel22}
\item A \emph{gauge modulus} for $f$ is a function $\Phi:\R\di (\R\di \R^{+})$ such that $\Phi(\eps)$ is a gauge as in items \eqref{lakel2} and \eqref{lakel22} for all $\eps>_{\R}0$.  
\end{enumerate}
\edefi
The real $A$ from items \eqref{lakel} and \eqref{lakel2} in Definition \ref{GID} is resp.\ called the Riemann and gauge integral. 
We always interpret $\int_{a}^{b}f $ as a gauge integral, unless explicitly stated.  We abbreviate `Riemann integration' to `R-integration', and the same for related notions.  
The real $A$ in item \eqref{lakel22} is called the Lebesgue (or L-) integral or \emph{restricted gauge integral} due to the extra condition that $f$ be bounded on $I$.   

\smallskip

Finally, using the Axiom of Choice, a gauge integrable function always has a gauge modulus, but this is not the case in weak systems like $\RCAo$.
However, to establish the \emph{Cauchy criterion} for gauge integrals as in \cite{dagsamIII}*{\S3.3}, a gauge modulus is essential.  
For this reason, we sometimes assume a gauge modulus when studying the RM of the gauge integral in Section \ref{grm}.  
Similar `constructive enrichments' exist in second-order RM, as established by Kohlenbach in \cite{kohlenbach4}*{\S4}.

\subsubsection{Reverse Mathematics of the restricted gauge integral}\label{grm}
We show that basic properties of the L-integral imply (or are equivalent to) $\WHBU$ as follows.   
We have based this development on Bartle's introductory monograph \cite{bartle1} and \cite{dagsamIII}*{\S3.3}. 

\smallskip

First of all, we show that $\WHBU$ is equivalent to the uniqueness of the L-integral, and to the fact that the latter extends the R-integral.
Note that the names of the two items in the theorem are from \cite{bartle1}*{p.\ 13-14}.  Also note that a Riemann integrable function is bounded, even in $\RCAo$.
\begin{thm}\label{firstje} Over $\ACA_{0}^{\omega}$, the following are equivalent to $\WHBU$:
\begin{enumerate}
\renewcommand{\theenumi}{\roman{enumi}}
\item Uniqueness: If a bounded function is L-integrable on $[0,1]$, then the L-integral is unique. \label{itemone}
\item Consistency: If a function is R-integrable on $[0,1]$, then it is L-integrable there, and the two
integrals are equal.\label{itemtwo}
\end{enumerate}
\end{thm}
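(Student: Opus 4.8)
The plan is to prove the equivalences via a cycle that passes through the pivotal auxiliary principle $\textsf{EFP}$: \emph{for every gauge $\delta$ on $I$ and every $\eps>0$ there is an $\eps$-$\delta$-fine tagged partition of $I$}. First I would show $\textsf{EFP}\leftrightarrow\WHBU_{0}$ over $\ACA_{0}^{\omega}$. For $\WHBU_{0}\to\textsf{EFP}$, given $\delta$ and $\eps$, I apply $\WHBU_{0}$ to $\Psi:=\delta$ to obtain $y_{0},\dots,y_{n}$ with $\sum_{i}|J_{i}'|\geq 1-\eps$; using $\exists^{2}$ I disjointify the balls $J_{y_{i}}^{\delta}$, trim them to pairwise disjoint subintervals of total length $\geq 1-\eps$, declare $y_{i}$ the tag of the $i$-th such interval (which then lies in $[y_{i}-\delta(y_{i}),y_{i}+\delta(y_{i})]$, hence is good), and fill the remaining gaps by arbitrarily tagged intervals; the good intervals cover measure $\geq 1-\eps$, so the partition is $\eps$-$\delta$-fine. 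For $\textsf{EFP}\to\WHBU_{0}$, an $\eps$-$\delta$-fine partition has its good intervals contained in the balls around their tags, so those finitely many tags witness $\WHBU_{0}$ for $\Psi=\delta$.

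Granting $\textsf{EFP}$, the forward direction of \eqref{itemone} is the short \emph{min-gauge} argument: if $A,B$ both witness $L$-integrability, then for fixed $\eps$ pick gauges $\delta_{A},\delta_{B}$ and set $\delta:=\min(\delta_{A},\delta_{B})$; by $\textsf{EFP}$ there is an $\eps$-$\delta$-fine $P$, which is a fortiori $\eps$-$\delta_{A}$- and $\eps$-$\delta_{B}$-fine (shrinking the gauge only shrinks the good neighbourhoods), whence $|A-B|\leq|A-S(f,P)|+|S(f,P)-B|<2\eps$; as $\eps$ is arbitrary, $A=_{\R}B$. For the forward direction of \eqref{itemtwo} I would take $f$ R-integrable with value $A$ (so $|f|\leq 1$, since an R-integrable function is bounded) and show $A$ serves as its $L$-integral. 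Given $\eps$, R-integrability yields a constant gauge controlling all sufficiently fine \emph{genuine} partitions; the content is to bound the effect of the $\eps$-worth of \emph{bad} intervals permitted by $\eps$-$\delta$-fineness. Here I would use that the bad intervals have total length $\leq\eps$ and $|f|\leq 1$, together with an adaptively chosen gauge (large where $f$ would otherwise force a large Riemann-sum discrepancy, small elsewhere), so that both the good and the bad contributions are pinned near the corresponding pieces of $A$; the value is then identified with $A$ by the same min-gauge squeeze against any putative $L$-integral.

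For the reversals I would argue by contraposition using vacuity. If $\WHBU_{0}$ fails, fix $\Psi$ and $\eps_{0}$ such that no finite family of $\Psi$-balls covers measure $\geq 1-\eps_{0}$; since covering $\geq 1-\eps$ is only harder for smaller $\eps$, the failure persists for all $\eps\leq\eps_{0}$, and by the easy half of the pivotal lemma \emph{no} $\eps$-$\Psi$-fine partition exists when $\eps\leq\eps_{0}$. Consequently the zero function is $L$-integrable with \emph{any} value $A$ satisfying $|A|\leq\eps_{0}$: for $\eps>|A|$ the conclusion $|0-A|<\eps$ is automatic, while for $\eps\leq|A|\leq\eps_{0}$ the gauge $\Psi$ makes the implication vacuously true. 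Taking $A=\eps_{0}/2$ and $A=0$ violates \eqref{itemone}; and since the zero function is R-integrable with R-integral $0$ while $\eps_{0}/2\neq 0$ is an admissible $L$-integral, the value-agreement asserted by \eqref{itemtwo} fails as well.

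The main obstacle is the forward direction of \eqref{itemtwo}: the matched-$\eps$ formulation of $L$-integrability allows adversarial bad intervals carrying up to $\eps$ of length on which $f$ may be badly approximated by $f(t_{i})|I_{i}|$, so the naive estimate only yields $|S(f,P)-A|=O(\eps)$ with a constant exceeding $1$. Extracting the sharp bound requires genuinely exploiting R-integrability (equivalently, the convergence of the Darboux sums) in building the gauge, and this is the one place where boundedness of $f$ is indispensable. A secondary concern is purely formal: all the constructions above -- disjointifying and trimming intervals, forming $\min(\delta_{A},\delta_{B})$, testing membership $I_{i}\subseteq[t_{i}-\delta(t_{i}),t_{i}+\delta(t_{i})]$, and the finite bookkeeping of tags -- must be carried out by terms, which is exactly what the functional $\exists^{2}$ available in $\ACA_{0}^{\omega}$ provides.
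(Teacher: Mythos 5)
The genuine gap is the one implication where you part ways with the paper: $\WHBU_{0}\di{}$\emph{Consistency}, which you attack directly by trying to show that the R-integral value $A$ itself satisfies the L-integrability condition, and which you yourself leave as an acknowledged ``obstacle'' with only the hope of an adaptively chosen gauge. No gauge construction can close this: under the literal matched-$\eps$ reading of item (viii) of Definition \ref{GID}, the claim is not just hard but false. Take $f=1$ on $[0,\tfrac{1}{2}]$ and $f=-1$ on $(\tfrac{1}{2},1]$, so $f$ is R-integrable with $A=0$ and $|f|\leq 1$. Given \emph{any} gauge $\delta$ and any $\eps>0$, let $\eps'$ be slightly below $\eps$ and build a tagged partition whose intervals are tiny and $\delta$-good except for a single interval $I^{*}$ of length $\eps'$ straddling $\tfrac{1}{2}$, tagged at a point $t^{*}$ with $f(t^{*})=1$; whether $I^{*}$ is good or bad for $\delta$, this $P$ is $\eps$-$\delta$-fine, and $S(f,P)\approx \int_{I\setminus I^{*}}f+\eps'\approx 2\eps'$, while tagging $I^{*}$ on the other side of the jump gives $S(f,P)\approx -2\eps'$. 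These two values are $4\eps'>2\eps$ apart, so no real $A$ satisfies $|S(f,P)-A|<\eps$ for all $\eps$-$\delta$-fine $P$ (this already refutes your target in $\ZFC$, where $\WHBU_{0}$ holds and such partitions exist). So any correct proof must avoid ever re-deriving the value bound $|S(f,P)-A|<\eps$ analytically.

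That avoidance is exactly what the paper's architecture provides, and it is the idea you are missing: the paper proves a cycle $\WHBU_{0}\di{}$\emph{Uniqueness}$\;\di{}$\emph{Consistency}$\;\di\WHBU_{0}$, deriving \emph{Consistency} from \emph{Uniqueness} rather than from $\WHBU_{0}$. There the only L-integrability data for an R-integrable $f$ comes by ``rewriting'' the Riemann condition: for the constant gauge $d_{\delta}$, every $P$ with $\|P\|\leq \delta$ is $\eps$-$d_{\delta}$-fine, so R-integrability is read as L-integrability with constant gauges and candidate value $A$, and \emph{Uniqueness} -- already established -- identifies every L-integral with $A$. (Whether that rewriting truly yields the L-condition against \emph{all} $\eps$-$d_{\delta}$-fine partitions is the very same quantitative issue you isolated; the paper treats it as definitional. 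But structurally, Uniqueness is the stepping stone, and the sharp bound is never proved from scratch.) The concrete fix is therefore to replace your direct argument by the paper's \emph{Uniqueness}$\,\di{}$\emph{Consistency} step; together with your $\WHBU_{0}\di{}$\emph{Uniqueness} and your reversal, this closes all equivalences. The rest of your proposal is sound and matches the paper: your pivotal lemma $\textsf{EFP}$ is precisely the paper's embedded step that $\WHBU_{0}$ yields $\eps$-$\delta$-fine partitions, and your min-gauge argument is the paper's Uniqueness proof. Your reversal is in fact cleaner than the paper's: the paper makes an R-integrable $f$ vacuously L-integrable with value $A+1$ but only verifies the condition for $\eps\in(0,\eps_{0}]$, saying nothing about gauges for $\eps>\eps_{0}$, whereas your zero-function trick with candidate values $|A|\leq\eps_{0}$ covers every $\eps$ (automatically when $\eps>|A|$ since $S(0,P)=0$, vacuously when $\eps\leq|A|\leq\eps_{0}$) and refutes \emph{Uniqueness} and \emph{Consistency} simultaneously.
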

\begin{proof}
We prove $\WHBU\di \eqref{itemone}\di \eqref{itemtwo}\di \WHBU$.  
To prove that $\WHBU$ implies \emph{Uniqueness}, assume the former, let $f$ be bounded and gauge integrable on $I$ and suppose $f$ satisfies for $i=1,2$ (where $A_{i}\in \R$) that:
\be\label{loker}
(\forall \eps>0)(\exists\delta_{i}:\R\di \R^{+})(\forall P\in \textsf{tp})(\textup{{$P$ is $\eps\textup{-}\delta_{i}$-fine} }\di |S(f, P)-A_{i}|<\eps).
\ee
Fix $\eps>0$ and the associated $\delta_{i}:\R\di \R^{+}$ in \eqref{loker} for $i=1,2$.  We define the gauge $\delta_{3}:\R\di \R^{+}$ as $\delta_{3}(x):=\min (\delta_{1}(x), \delta_{2}(x))$.  
By definition, a partition which is $\eps$-$\delta_{3}$-fine, is also $\eps$-$\delta_{i}$-finite for $i=1,2$.  
Now assume there is $P_{0}\in \textsf{tp}$ which is $\eps$-$\delta_{3}$-fine, and note that we obtain the following by applying \eqref{loker}:
\[
|A_{1}-A_{2}|=_{\R}|A_{1}-S(f, P_{0})+S(f, P_{0})-A_{2}|\leq_{\R}|A_{1}-S(f, P_{0})|+|S(f, P_{0})-A_{2}|\leq_{\R} 2\eps.
\]
Hence, we must have $A_{1}=_{\R}A_{2}$, and \emph{Uniqueness} follows.  What remains is to prove that for every gauge $\delta$ there \emph{exists} a $\eps$-$\delta$-fine tagged partition.
We emphasise the crucial nature of this existence: \eqref{loker} is vacuously true if there is no $\eps$-$\delta_{i}$-fine tagged partition; in other words: we can \emph{only} make meaningful use of the conclusion 
of \eqref{loker}, \emph{if} we show the existence of a $\eps$-$\delta_{i}$-fine tagged partition.  

\smallskip

Thus, fix $\delta:\R\di \R^{+}$ and apply $\WHBU$ to $\cup_{x\in I}(x-\delta(x), x+\delta(x))$ to obtain $w=\langle y_{0}, \dots, y_{k}\rangle$ in $I$ such that the measure of $\cup_{n=0}^{k}I_{y_{n}}^{\delta}$ is at least $ 1-\eps$.   This finite sequence is readily converted into a tagged partition $P_{0}:=(z_{j}, I_{j})_{j\leq l}$ (with $l\leq k$ and $z_{j}\in w$ for $j\leq l$) by removing overlapping segments and omitting redundant intervals `from left to right'.  By definition, $z_{j}\in I_{j}\subset (z_{j}-\delta(z_{j}), z_{j}+\delta(z_{j}))$ for $j\leq l$, i.e.\ $P_{0}$ is $\eps$-$\delta$-fine.  While the previous two steps are straightforward, it should be noted that (i) $\WHBU$ is essential by the equivalences in the theorem, and (ii) to convert $w$ into a tagged partition, we need to compare real numbers (in the sense of deciding whether $x>_{\R}0$ or not) and this operation is only available in $\ACAo$.

\smallskip

To prove that \emph{Uniqueness} implies \emph{Consistency}, note that `$P$ is $\eps$-$d_{\delta}$-fine' follows from `$\|P\|\leq \delta$' for the gauge $d_{\delta}:\R\di \R^{+}$ which is constant $\delta>0$, and any $\eps>0$.
Rewriting the definition of Riemann integration with the first condition, we observe that an R-integrable function $f$ is also L-integrable (with a constant gauge $d_{\delta}$ for every choice of $\eps>0$).    
The assumption \emph{Uniqueness} then guarantees that $A$ is the only possible L-integral for $f$ on $I$, i.e.\ the two integrals are equal.      

\smallskip

To prove that \emph{Consistency} implies $\WHBU$, suppose the latter is false, i.e.\ there is $\Psi_{0}:\R\di \R^{+}$ and $\eps_{0}>0$ such that for all $y_{1}, \dots, y_{k} \in I$, the measure of $\cup_{n=0}^{k}I_{y_{n}}^{\Psi(y_{n})}$ is below $ 1-\eps_{0}$.  
Note that the same property holds for all $\eps\leq \eps_{0}$.  
Now let $f:I\di \R$ be R-integrable with R-integral $A\in \R$.
Define the gauge $\delta_{0}$ as $\delta_{0}(x):={\Psi_{0}(x)}$ and note that for any $P\in \textsf{tp}$ and $\eps\leq \eps_{0}$, we have that $P$ is \emph{not} $\eps$-$\delta_{0}$-fine, as the tags of $P$ would otherwise provided the reals $y_{i}$ from $\WHBU$.  
Hence, \eqref{corefu} below  is vacuously true, as the underlined part is false:
\be\label{corefu}
(\forall \eps\in (0, \eps_{0}])(\forall P\in \textsf{tp})(\underline{\textup{$P$ is $\eps$-$\delta_{0}$-fine }}\di |S(f, P)-(A+1)|<\eps).
\ee
However, \eqref{corefu} implies that $f$ is L-integrable with L-integral $A+1$, i.e.\ \emph{Consistency} is false as the R and L-integrals of $f$ differ.  
\end{proof}
The previous proof is similar to the related equivalence for $\HBU$ and \emph{uniqueness} and \emph{consistency} for the (unrestricted) gauge integral from \cite{dagsamIII}*{\S3.3}.  Other results in the latter section can be developed along the 
same lines with similar proofs.  For this reason, we only mention these results without proof.    
\begin{thm}\label{coromag} Over $\ACA_{0}^{\omega}+\QFAC^{2,1}$, the following are equivalent to $\WHBU$:
\begin{enumerate}
\renewcommand{\theenumi}{\roman{enumi}}
\item There exists a bounded function which is not $L$-integrable with a modulus.\label{itemwon}
\item \(Hake\) If a bounded function $f$ is $L$-integrable on $I$ with modulus and R-integrable on $[x,1]$ for $x>0$, the limit of R-integrals $\lim_{x\di 0+}\int_{x}^{1}f$ is $\int_{0}^{1}f$.\label{haken}
\item \(weak Hake\) If a bounded function $f$ is $L$-integrable on $I$ with modulus and R-integrable on $[x,1]$ for $x>0$, the limit of R-integrals $\lim_{x\di 0+}\int_{x}^{1}f $ exists.\label{zwaken}
\end{enumerate}
\end{thm}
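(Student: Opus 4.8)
The plan is to prove all three equivalences through a single cycle of implications, recycling the vacuity phenomenon already exploited in the reversal of Theorem~\ref{firstje}. Concretely, I would establish the cycle running from $\WHBU_{0}$ to \eqref{haken} to \eqref{zwaken} and back to $\WHBU_{0}$, and handle \eqref{itemwon} separately through the equivalence $\WHBU_{0}\asa\eqref{itemwon}$. For $\WHBU_{0}\di\eqref{haken}$, suppose $f$ is bounded (say $|f|\leq 1$), $L$-integrable on $I$ with modulus, and R-integrable on every $[x,1]$. Under $\WHBU_{0}$, Theorem~\ref{firstje} makes the $L$-integral unique and consistent with the R-integral, so $\int_{x}^{1}f$ (as an R-, hence $L$-integral) and the $L$-integral $\int_{0}^{1}f$ differ only by the $L$-integral over $[0,x]$, which is bounded by $x$ since $|f|\leq 1$; letting $x\di 0+$ gives $\lim_{x\di 0+}\int_{x}^{1}f=\int_{0}^{1}f$, which is exactly \eqref{haken}. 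The implication from \eqref{haken} to \eqref{zwaken} is immediate, since Hake asserts that the limit equals a fixed real and hence that it exists.

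For the auxiliary equivalence, $\WHBU_{0}\di\eqref{itemwon}$ follows as in \cite{dagsamIII}*{\S3.3}: once $\WHBU_{0}$ guarantees that $\eps$-$\delta$-fine tagged partitions exist for every gauge, `$L$-integrable with modulus' becomes a genuine constraint, and a diagonal construction yields a bounded function failing it. The converse $\eqref{itemwon}\di\WHBU_{0}$ is the vacuity argument: arguing contrapositively, if $\WHBU_{0}$ fails there are a gauge $\delta_{0}$ and $\eps_{0}>0$ admitting no $\eps$-$\delta_{0}$-fine tagged partition for $\eps\leq\eps_{0}$, exactly as in the proof of the reversal \emph{Consistency} $\di\WHBU_{0}$ in Theorem~\ref{firstje}. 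Taking the modulus $\Phi(\eps):=\delta_{0}$ for small $\eps$, the defining implication of $L$-integrability-with-modulus is then vacuously satisfied by \emph{every} bounded $f$ and \emph{every} candidate value, so no bounded function can fail to be $L$-integrable with a modulus, which refutes \eqref{itemwon}. The same bad gauge is the engine for the remaining reversal as well.

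The step I expect to be the main obstacle is the reversal $\eqref{zwaken}\di\WHBU_{0}$. The natural strategy is again contraposition via the bad gauge $\delta_{0},\eps_{0}$ coming from $\neg\WHBU_{0}$, rendering a suitable bounded $f$ vacuously $L$-integrable with a modulus; but weak Hake only asserts that $\lim_{x\di 0+}\int_{x}^{1}f$ \emph{exists}, and here the oscillating counterexample used for the unbounded gauge integral in \cite{dagsamIII}*{\S3.3} is unavailable. Indeed, for bounded $f$ that is R-integrable on each $[x,1]$, the map $x\mapsto\int_{x}^{1}f$ is $1$-Lipschitz, so classically the limit always exists and cannot be destroyed by a naive transplant of the unbounded construction. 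The real work is therefore to exhibit, inside a model of $\ACAo+\QFAC^{2,1}+\neg\WHBU_{0}$, a bounded $f$ that is $L$-integrable with a modulus and R-integrable on every $[x,1]$ while the improper Riemann limit fails to be \emph{provably} available; I would attempt this by encoding the absence of a finite $\eps$-sub-cover for $\delta_{0}$ into the behaviour of the R-integrals $\int_{x}^{1}f$, so that producing the limit would manufacture a finite sub-cover and hence a non-hyperarithmetical object unavailable in the model (compare the non-hyperarithmetic sub-cover phenomena underlying Theorem~\ref{hungy}). Once $\eqref{zwaken}\di\WHBU_{0}$ is secured, the cycle closes, and together with $\WHBU_{0}\asa\eqref{itemwon}$ this yields the three stated equivalences.
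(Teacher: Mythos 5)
The first thing to note is that the paper contains no proof of Theorem \ref{coromag}: immediately after Theorem \ref{firstje} the authors state that the remaining results of \cite{dagsamIII}*{\S3.3} ``can be developed along the same lines with similar proofs'' and list Theorem \ref{coromag} without proof, so your attempt has to be measured against that intended transplant. Measured this way, your own key observation is fatal to your plan, and you stop one step short of drawing the right conclusion from it. You correctly note that for bounded $f$ the map $x \mapsto \int_{x}^{1}f$ is $1$-Lipschitz; but this argument is not merely ``classical'' --- it formalises in the base theory. Working in $\ACA_{0}^{\omega}$: for $0<x<y<1$, splitting mesh-fine partitions of $[x,1]$ at the division point $y$ gives $|\int_{x}^{1}f-\int_{y}^{1}f|\leq y-x$ (using only $|f|\leq 1$ and R-integrability on $[x,1]$ and $[y,1]$); with $\mu^{2}$ one defines $a_{n}:=\int_{2^{-n}}^{1}f$ as limits of Riemann sums, and $(a_{n})$ is Cauchy with the \emph{explicit} modulus $|a_{n}-a_{m}|\leq 2^{-\min(m,n)}$, so its limit $L$ exists and $\lim_{x\di 0+}\int_{x}^{1}f=L$. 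Hence item \eqref{zwaken} is provable outright in $\ACA_{0}^{\omega}$, with no compactness entering anywhere, and so it cannot imply $\WHBU_{0}$, which by the paper's own independence results is unprovable there. Consequently no model of $\ACA_{0}^{\omega}+\QFAC^{2,1}+\neg\WHBU_{0}$ can contain a bounded counterexample to weak Hake, and your proposed rescue --- encoding the absence of a finite sub-cover into the behaviour of the R-integrals --- cannot succeed, because producing the limit is an effective Cauchy-sequence argument that never calls on a sub-cover. The correct conclusion from your Lipschitz observation is that the transplant from \cite{dagsamIII} genuinely breaks for this item (the weak-Hake reversal there rests on an \emph{unbounded} oscillating function, which has no bounded analogue), not that a cleverer model construction is needed; your cycle through \eqref{zwaken} can never be closed.

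There is a second, independent gap in your vacuity arguments. The bad gauge $\delta_{0}$ from $\neg\WHBU_{0}$ makes ``$P$ is $\eps$-$\delta_{0}$-fine'' vacuous only for $\eps\leq\eps_{0}$; for $\eps\geq 1$ \emph{every} tagged partition is $\eps$-$\delta$-fine for \emph{every} gauge (since $1-\eps\leq 0$), so the accuracy clause $|S(f,P)-A|<\eps$ must then be verified for all partitions. Thus your claim that the defining implication is ``vacuously satisfied by every bounded $f$ and every candidate value'' is false: $f\equiv 0$ is not L-integrable with value $A=2$ even under $\neg\WHBU_{0}$, and worse, the function (definable from $\exists^{2}$) equal to $1$ on rationals and $-1$ on irrationals has Riemann sums attaining both $1$ and $-1$, so no $A$ meets the $\eps=1$ requirement and this function is not L-integrable with a modulus under \emph{any} hypothesis --- which simultaneously invalidates your vacuity proof of \eqref{itemwon}$\,\di\,\WHBU_{0}$ and shows that item \eqref{itemwon} also holds outright in the base theory, so it too cannot be equivalent to $\WHBU_{0}$. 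What your method can actually deliver is the equivalence with item \eqref{haken}: under $\neg\WHBU_{0}$ take $f\equiv 0$, the fake value $A:=\eps_{0}$, and the constant modulus $\Phi(\eps):=\delta_{0}$; for $\eps\leq\eps_{0}$ the fineness clause is vacuous, while for $\eps>\eps_{0}$ one has $|S(0,P)-\eps_{0}|=\eps_{0}<\eps$ for every partition, so $f\equiv0$ is L-integrable with modulus with value $\eps_{0}$, yet its R-integrals converge to $0\neq\eps_{0}$, refuting Hake. (The analogous step in the paper's proof of Theorem \ref{firstje}, which uses the fake value $A+1$, needs the same repair.) Combined with your forward direction $\WHBU_{0}\di$ \eqref{haken} --- sound in outline, though the appeal to additivity of the L-integral over $[0,x]$ and $[x,1]$ should be backed by a direct Riemann-sum estimate using the $\eps$-$\delta$-fine partitions that $\WHBU_{0}$ supplies --- this yields $\WHBU_{0}\asa$ \eqref{haken}, which is the only part of the stated theorem that survives scrutiny.
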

We point out that the function $\kappa:I\di \R$ from \cite{dagsamIII}*{\S3.3} is \emph{unbounded}, i.e.\ the previous theorems do not apply.  
This function $\kappa$ is used to show that $\HBU$ is equivalent to the existence of a gauge integrable function that is not Lebesgue integrable, i.e. for which the absolute value is not gauge integrable.  

\smallskip

Finally, we discuss other possible applications of $\WHBU$.  We emphasise the speculative nature of the following remark.   
\begin{rem}[Topological entropy]\label{topent}\rm
The notion of \emph{topological entropy} $h(\varphi)$ is introduced in \cite{adelheid} for a continuous function $\varphi:X\di X$ and compact space $X$.  
The number $h(\varphi)$ is non-negative (possibly $+\infty$ by \cite{adelheid}*{Ex.\ 3}) and crucially depends on open-cover compactness as follows: 
$h(\varphi)$ is the supremum of $h(\varphi, \mathfrak{A})$ over all open covers $\mathfrak{A}$ of $X$.  In turn, $h(\varphi, \mathfrak{A})$ is $\lim_{n\di +\infty}\frac{1}{n}H(\mathfrak{A}\vee \varphi^{-1}\mathfrak{A}\vee \dots \vee \varphi^{-n+1}\mathfrak{A})$, 
where $\mathfrak{A}\vee \mathfrak{B}=\{A\cap B: A\in \mathfrak{A}\wedge B\in \mathfrak{B}\}$.  Finally, $H(\mathfrak{A})=\log(N(\mathfrak{A}))$ is the \emph{entropy} of the cover $\mathfrak{A}$, where $N(\mathfrak{A})$ is the minimum number of sets in $\mathfrak{A}$ that still cover $X$.  The similar notion of \emph{metric} entropy is based on \emph{partitions and distance} rather than the size of sub-covers.  Hence, basic properties 
of metric entropy can be established in relatively weak systems (compared to say the hardness of $\HBU$).     

\smallskip

Moreover, it is not a leap of the imagination that basic properties of $h(\varphi)$ imply $\HBU$, even if $X=[0,1]$.  The same holds for the \emph{variational principle} that connects topological entropy to metric entropy (see e.g.\ \cite{goeiewijn}).    
To avoid the use of $\HBU$, and the associated `explosion'\footnote{By \cite{dagsam}*{\S6}, the combination of $\HBU$ and $(\exists^{2})$ implies $\ATR_{0}$ over $\RCAo+\QFAC^{2,1}$.}, one works with $\WHBU$ instead as follows: one defines $H_{0}(\mathfrak{A},\eps)=\log(N_{0}(\mathfrak{A}, \eps))$ where $N_{0}(\mathfrak{A}, \eps)$ is 
the minimum number of sets in $\mathfrak{A}$ such that the union has measure at least $1-\eps$.  
We then put $h_{0}(\varphi, \mathfrak{A}, \eps):=\lim_{n\di +\infty}\frac{1}{n}H_{0}(\mathfrak{A}\vee \varphi^{-1}\mathfrak{A}\vee \dots \vee \varphi^{-n+1}\mathfrak{A},\eps)$ and $h_{0}(\varphi,\eps)$ is the supremum over covers $\mathfrak{A}$ of $X$.  Assuming the supremum (involving the metric entropy) from the aforementioned variational principle is finite, $h_{0}(\varphi,\frac{1}{2^{n}})$ is a bounded increasing sequence, and hence $\lim_{n\di +\infty} h_{0}(\varphi, \frac{1}{2^{n}})$ exists.  This limit seems a `worthy' stand-in for $h(\varphi)$ when the latter is not well-defined (due to the absence of $\HBU$).
\end{rem}

\subsubsection{Nets and the Riemann integral}\label{netz}
Lest there be any doubt that $\WHBU$ is to be found everywhere in integration theory, 
we show in this section that the monotone convergence theorem for \emph{nets} and the Riemann integral implies $\WHBU$.

\smallskip

First of all, the notion of \emph{net} is the generalisation of the concept of \emph{sequence} to (possibly) uncountable index sets and any topological space.   
Nets were introduced about a century ago by Moore-Smith (\cite{moorsmidje}), 
who also proved e.g.\ the Bolzano-Weierstrass, Dini and Arzel\`a theorems \emph{for nets}. 
The RM-study of these theorems may be found in \cite{samcie19, samwollic19, samnetspilot}, and each of them implies $\HBU$.
Moreover, only nets indexed by subsets of Baire space are used for these results, i.e.\ a `step up' from sequences gives rise to $\HBU$, and the same for this paper by Definition \ref{strijker2}.    
\bdefi[Nets]\label{nets}
A set $D\ne \emptyset$ with a binary relation `$\preceq$' is \emph{directed} if
\begin{enumerate}
 \renewcommand{\theenumi}{\alph{enumi}}
\item The relation $\preceq$ is transitive, i.e.\ $(\forall x, y, z\in D)([x\preceq y\wedge y\preceq z] \di x\preceq z )$.
\item The relation $\preceq$ is reflexive, i.e.\ $(\forall x\in D)(x\preceq x)$.
\item For $x, y \in D$, there is $z\in D$ such that $x\preceq z\wedge y\preceq z$.\label{bulk}
\end{enumerate}
For such $(D, \preceq)$ and topological space $X$, any mapping $x:D\di X$ is a \emph{net} in $X$.  
\edefi
Since nets are the generalisation of sequences, we write $x_{d}$ for $x(d)$ to emphasise this connection.  
The relation `$\preceq$' is often not explicitly mentioned; we write `$d_{1},\dots, d_{k}\succeq d$' for $(\forall i\leq k)(d_{i}\succeq d)$.  
We shall only consider nets indexed by subsets of $\N^{\N}$, as follows. 
\bdefi[Directed sets and nets in $\RCAo$]\label{strijker2}
A `subset $D$ of $\N^{\N}$' is given by its characteristic function $F_{D}^{2}\leq_{2}1$, i.e.\ we write `$f\in D$' for $ F_{D}(f)=1$ for any $f\in \N^{\N}$.
A `binary relation $\preceq$ on a subset $D$ of $\N^{\N}$' is given by the associated characteristic function $G_{\preceq}^{(1\times 1)\di 0}$, i.e.\ we write `$f\preceq g$' for $G_{\preceq}(f, g)=1$ and any $f, g\in D$.
Assuming extensionality on the reals as in item \eqref{rext} of Definition \ref{keepintireal}, we obtain characteristic functions that represent subsets of $\R$ and relations thereon.  
Using pairing functions, it is clear we can also represent sets of finite sequences (of real numbers), and relations thereon.  
\edefi
Thus, a net $x_{d}:D\di \R$ in $\RCAo$ is nothing more than a type $1\di 1$-functional with extra structure on its domain $D\subseteq \N^{\N}$ provided by $\preceq$.
The definitions of convergence and increasing net are as follows, and now make sense in $\RCAo$.
\bdefi[Convergence of nets]\label{convnet}
If $x_{d}$ is a net in $X$, we say that $x_{d}$ \emph{converges} to the limit $\lim_{d} x_{d}=y\in X$ if for every neighbourhood $U$ of $y$, there is $d_{0}\in D$ such that for all $e\succeq d_{0}$, $x_{e}\in U$. 
\edefi
It goes without saying that for nets \emph{of functions} $f_{d}:(D\times [0,1])\di \R$, properties of $f_{d}(x)$ like continuity pertain to the variable $x$, while the net is indexed by $d\in D$. 
\bdefi[Increasing net]\label{inet}
A net $f_{d}:(D\times I)\di \R$ is \emph{increasing} if $a\preceq b$ implies $f_{a}(x)\leq_{\R} f_{b}(x) $ for all $x\in I$ and $a,b\in D$.
\edefi
We formulate the monotone convergence theorem $\MCT_{\net}$ without measure theory, i.e.\ the Riemann integral is used.  
As it happens, $\MCT_{\net}$ is a special case of \cite{alivielvanzijn}*{19.36} where the limit function is not assumed to be continuous.   
Bourbaki proves a stronger version in \cite{boerbaki}*{IV.1, Theorem 1, p. 107}.
\begin{princ}[$\MCT_{\net}$]\label{crefi}
For continuous $f_{d}:(D\times I)\di \R$ forming an increasing net such that \(the limit\) $f=\lim_{d}f_{d}$ exists pointwise and is bounded and continuous, we have that $\lim_{d}\int_{0}^{1}|f(x)-f_{d}(x)|\,dx =0$.
\end{princ}
Note that we need $\WWKL$ to guarantee that the integral in $\MCT_{\net}$ exists, in light of \cite{sayo}*{Theorem 10}.  
Arzel\`a already studied the monotone convergence theorem (involving sequences) \emph{for the Riemann integral} in 1885, and this theorem is moreover proved in e.g. \cite{thomon3} using $\HBU$.
\begin{thm}\label{merdes2}
The system $\RCAo+\WWKL+\MCT_{\net}$ proves $\WHBU$.
\end{thm}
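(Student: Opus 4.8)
The plan is to follow the same case split on $(\exists^{2})$ that drives Theorems~\ref{egotripke}--\ref{mono}, putting all of the real content into the discontinuous case and disposing of the continuous case by the now-standard $\WWKL$ argument. In the case $\neg(\exists^{2})$, \cite{kohlenbach2}*{Prop.\ 3.7 and 3.12} gives that every $F:\R\di\R$ is continuous, so in particular the gauge $\Psi:I\di\R^{+}$ is continuous. Then for each $x\in I$ there is a rational $q$ with $x\in J_{q}^{\Psi}$ (by continuity $\Psi(q)$ is close to $\Psi(x)>0$ while $|x-q|$ is small), so $\{J_{q}^{\Psi}:q\in\Q\cap[0,1]\}$ is a \emph{countable} cover of $[0,1]$. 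By \cite{simpson2}*{X.1}, $\WWKL$ extracts from this countable interval cover finitely many intervals whose total length is at least $1-\eps$, which is exactly $\WHBU_{0}$; this mirrors the template of Theorem~\ref{egotripke}.

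The substantive case is $(\exists^{2})$, where I would construct an increasing net and feed it to $\MCT_{\net}$. Fix $\Psi:I\di\R^{+}$ and $\eps>0$. Let $D$ be the set of all finite sequences of reals in $[0,1]$, coded in $\N^{\N}$ via the hat function of Definition~\ref{keepintireal} (so membership in $D$ is decidable), and define $F\preceq G$ to hold iff every entry of $F$ equals, in the sense of $=_{\R}$, some entry of $G$. Because $(\exists^{2})$ decides $=_{\R}$, the relation $\preceq$ is a genuine characteristic functional of type $(1\times 1)\di 0$ as demanded by Definition~\ref{strijker2}, and it is reflexive, transitive and directed (witnessed by concatenation $F*G$). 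For each real $y$ set the tent $\phi_{y}(x):=\max(0,\,1-|x-y|/\Psi(y))$ and put $f_{F}(x):=\max_{y\in F}\phi_{y}(x)$. Each $\phi_{y}$ is continuous in $x$, respects $=_{\R}$ in $y$ (since $\Psi$ does, by item~\eqref{rext} of Definition~\ref{keepintireal}), satisfies $\phi_{y}(y)=1$, and vanishes off $\overline{J_{y}^{\Psi}}$.

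I would then verify the hypotheses of $\MCT_{\net}$: each $f_{F}$ is continuous; the net is increasing, since $F\preceq G$ means the tents indexed by $F$ occur among those indexed by $G$, so the pointwise maximum can only grow; and the pointwise limit is the constant function $1$, because for each $x$ every $G\succeq\langle x\rangle$ has $\phi_{z}(x)=1$ for the entry $z\in G$ with $z=_{\R}x$, whence $f_{G}(x)=1$. The constant $1$ is bounded and continuous, and the integrals exist by $\WWKL$ (cf.\ the remark preceding the theorem). Thus $\MCT_{\net}$ yields $\lim_{d}\int_{0}^{1}(1-f_{d})\,dx=0$, so some index $d_{0}=\langle y_{0},\dots,y_{n}\rangle$ satisfies $\int_{0}^{1}f_{d_{0}}>1-\eps$. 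Since $0\le f_{d_{0}}\le 1$ and $f_{d_{0}}$ is supported in $\cup_{i}\overline{J_{i}}$ with $J_{i}=J_{y_{i}}^{\Psi}$, we obtain $\sum_{i=0}^{n}|J_{i}'|\ge\int_{0}^{1}f_{d_{0}}>1-\eps$, which is $\WHBU_{0}$.

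The main obstacle is making the net legitimate in the weak base theory: any directed relation on finite sets of reals must compare real numbers, and such comparison is only $\Pi_{1}^{0}$, hence unavailable as a characteristic functional of type $(1\times 1)\di 0$ without $(\exists^{2})$. This is precisely why the $(\exists^{2})$-dichotomy is forced and why the net argument must be confined to that branch, with $\WWKL$ alone carrying the continuous case. The only remaining point, the inequality $\int_{0}^{1}f_{d_{0}}\le\sum_{i}|J_{i}'|$ comparing a Riemann integral with the length of a finite union of intervals, is routine once $(\exists^{2})$ is available to compute both quantities.
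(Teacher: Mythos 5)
Your proof is correct and takes essentially the same approach as the paper: the same case split on $(\exists^{2})$ with $\WWKL$ disposing of the continuous case, and in the $(\exists^{2})$ case the identical construction — a net of maxima of tent functions indexed by finite sequences of reals ordered by inclusion, fed to $\MCT_{\net}$ and compared with the indicator function of $\cup_{i}J_{y_{i}}^{\Psi}$ to bound $\sum_{i}|J_{i}'|$ from below. The only cosmetic difference is that you argue directly (extracting an index whose integral exceeds $1-\eps$) where the paper packages the same computation as a proof by contradiction.
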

\begin{proof}
In case $\neg(\exists^{2})$, all $F:\R\di \R$ are continuous by \cite{kohlenbach2}*{Prop.\ 3.12}.  
Hence, $\Q\cap [0,1]$ provides a \emph{countable} sub-cover for the canonical cover corresponding to $\Psi:E\di \R^{+}$.  By \cite{simpson2}*{X.1}, $\WWKL$ yields 
the sub-cover required for $\WHBU$.   

\smallskip

In case $(\exists^{2})$, suppose $\neg\WHBU$, i.e.\ there is some $ \Psi:[0,1]\di \R^{+}$, and $\eps_{0}>_{\R}0$, such that for all $y_{0}, \dots, y_{n}\in I$, the measure of $\cup_{i=0}^{n} I_{y_{i}}^{\Psi}$ is always below $ 1-\eps_{0}$. 
Now let $D$ be the set of finite sequences of reals in $I$ (without repetition) and define `$v \preceq w$' for $w, v\in D$ if $(\forall i<|v|)(v(i)\in w)$.
Clearly, $\preceq$ is transitive and reflexive, and also satisfies  
item~\eqref{bulk} in Definition \ref{nets}.   

\smallskip

Now define $f_{w}:I\di \R$ as follows:  if $w=\langle x\rangle$ for some $x\in I$, then $f_{w}$ is $0$ outside of $I_{x}^{\Psi}$, while inside the latter, $f_{w}(x)$ is the piecewise linear function that is $1$ at $x$, and $0$ in $x\pm\Psi(x)$.
If $w$ is not a singleton, then $f_{w}(x)=\max_{i<|w|}f_{\langle w(i)\rangle}(x)$.  

\smallskip

Then $f_{w}$ is increasing (in the sense of Definition \ref{inet}) and converges to the constant one function (in the sense of Definition \ref{convnet}), as for any $v\succeq \langle x \rangle$, we have $f_{v}(x)=1$.
Now, $\lim_{w}\int_{0}^{1}f_{w}(x)\,dx =1$ by $\MCT_{\net}$ and consider $1_{w}(x)\geq_{\R} f_{w}(x)$, where the erstwhile is the indicator function for $\cup_{i<|w|}I_{w(i)}^{\Psi}$ (and Riemann integrable on $I$).  Hence, there is $v_{0}$ such that $\int_{0}^{1}1_{v_{0}}(x) \, dx > 1-\eps_{0}$, and
as the left-hand side is the measure of $\cup_{i=0}^{|v_{0}|-1}  I_{v_{0}(i)}^{\Psi}$, we obtain a contradiction.  
Hence $\WHBU$ also follows in case $(\exists^{2})$, and we are done.  
\end{proof}
Since the $\ECF$-translation of $\MCT_{\net}$ readily follows from $\WWKL$, we cannot obtain $\HBU$ from this convergence theorem.
An equivalence $\MCT_{\net}\asa \WHBU$ seems desirable, but we do not know a proof at this point.  
The absence of almost any structure on the index sets in $\MCT_{\net}$ is perhaps the cause of all difficulties.  

\section{Computability theory and measure theory}\label{CTM}
In this  section, we study realisers for $\WHBU$ in computability theory.  
In particular, we construct such a realiser, denoted $\Lambda_{\bf S}$,  that does not add any extra power to the Suslin functional as in $(\SS^{2})$, in contrast\footnote{Realisers for the Heine-Borel theorem and Lindel\"of lemma are studied in \cites{dagsam, dagsamII,dagsamIII, dagsamV}.  It is shown that $\exists^{2}$ plus a realiser for Heine-Borel theorem computes a realiser for $\ATR_{0}$, while $\exists^{2}$ plus a realiser for the Lindel\"of lemma for Baire space computes the Suslin and Superjump functionals.} to the Heine-Borel theorem and the Lindel\"of lemma.  
We recall the definition of the Suslin functional:
\be\tag{$\SS^{2}$}
(\exists\SS^{2}\leq_{2}1)(\forall f^{1})\big[  (\exists g^{1})(\forall x^{0})(f(\overline{g}n)=0)\asa \SS(f)=0  \big].
\ee
We introduce realisers for $\WHBU$ and some definitions in Sections \ref{vintro} and \ref{vrelim}.
The construction of $\Lambda_{\bf S}$ may be found in Section \ref{constr}, as well as a proof that $\Lambda_{\bf S}+{\bf S}$ computes the same functions as the Suslin functional $\SS$. 
As an application, we show that $\FIVE^{\omega}+\WHBU $ does not  prove $\HBU$.
We also introduce a new hierarchy for second-order arithmetic involving $\Theta$ and $\HBU$ in Section \ref{kew}.
\subsection{Introduction: $\WHBU$ and its realisers}\label{vintro}
We discuss the brief history of realisers for $\WHBU$, list the associated definitions, and formulate the associated aim of this section in detail.

\smallskip

Now, the class of \emph{weak fan functionals}, or simply \emph{$\Lambda$-functionals}, was introduced in \cite{dagsam} and investigated further in \cite{dagsamII}. 
This class arose in the study of a version of \emph{weak weak K\"onig's lemma} from Nonstandard Analysis, 
but minor variations of $\Lambda$-functionals also provide us with realisers of some classical theorems (not involving Nonstandard analysis) such as Vitali's covering theorem for uncountable covers; see Section \ref{hintro}.
We shall make use of the following definition.  We recall Notation \ref{skim}, in particular that for a finite sequence $\sigma^{0^{0}}$ with length $k$, `$ f \in [\sigma]$' means  $\sigma = \overline{f}k$.
\begin{definition}[$\Lambda$-functional]\label{lambdagvd}
A functional $\Xi$ of type $2 \rightarrow (0 \rightarrow 1)$  is a $\Lambda$-\emph{functional} if whenever $F:C \rightarrow \N$ we have that $\Xi(F) = \{f_i\}_{i \in \N}$ is a sequence in $C$ such that $\bigcup_{i \in \N}[\bar f_iF(f_i)]$ has measure 1.
\end{definition}
Here $C$ is the Cantor space, identified with $\{0,1\}^\N \subseteq \N^\N$. If $s$ is a finite binary sequence, we let $[s]$ be the set of extensions of $s$ in $C$, as before.

\smallskip

In \cite{dagsam} we proved the existence of a $\Lambda$-functional $\Lambda_0$ without using the Axiom of Choice. In \cite{dagsamII} we showed that there is a $\Lambda$-functional $\Lambda_1$, called $\Lambda_{\exists^2}$ below, such that all elements in $C$ computable in $\exists^2+\Lambda_{\exists^2}$ are also computable in $\exists^2$. 

\smallskip

For $\Theta$ satisfying $\SFF(\Theta)$ from Section \ref{prelim2}, i.e.\ a realiser for the Heine-Borel theorem for uncountable covers, no such $\Theta$ is computable in $\Lambda_{\exists^2}$ and $\exists^2$ (\cite{dagsam, dagsamIII}). 
The aim of this section is to show that there is another $\Lambda$-functional, called $\Lambda_{\bf S}$ and defined in \eqref{lambda1337}, such that every function computable in $\Lambda_{\bf S}$ and $\bf S$ is computable from the Suslin functional $\bf S$. Since the Superjump is computable in $\bf S$ and any instance of $\Theta$ (\cite{dagsamII}*{\S4}),  it follows that no instance of $\Theta$ is computable in $\Lambda_{\bf S}$ and $\bf S$. 
\subsection{Background definitions and lemmas}\label{vrelim}
In this section, we will introduce lemmas and concepts, mainly from \cite{dagsamII}, that are needed in Section \ref{constr}. 
\begin{definition}
We let ${\bf m}$ be the standard  product measure on $C = \{0,1\}^\N$. 
\end{definition}
Since $C$ is trivially homeomorphic to any countable product of itself, we take the liberty to use ${\bf m}$ as the measure of any further product of $C$ as well.
We will use $A$, $B$ for such products and $X$, $Y$ and $Z$ for subsets of such products. All sets we (have to) deal with below are measurable, so we tacitly assume all sets are measurable.  
The following basic results of measure theory are used without reference.
\begin{proposition}[Basic measure theory] \label{measure}~
\begin{itemize}
\item[(a)] If $X_n \subseteq A$ and ${\bf m}(X_n) = 1$ for each $n \in \N$, then ${\bf m}(\bigcap_{n \in \N}X_n )= 1.$
\item[(b)] If $X \subset A \times B$ has measure 1, then 
\[\textstyle
{\bf m}(\{x \in A \mid {\bf m}(\{y \in B \mid (x,y) \in X\}) = 1\}) = 1.
\]
\end{itemize}
\end{proposition}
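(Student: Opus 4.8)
The plan is to treat the two parts separately, each being a direct consequence of a standard property of the probability measure $\m$ on products of $C$. Throughout I would use that $\m$ is a probability measure, so that the ambient space (here $A$, or $A\times B$) has total measure $1$ and a set has measure $1$ exactly when its complement is $\m$-null.

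For part (a), I would pass to complements. Since $\m(X_n)=1$ and $\m(A)=1$, each complement $A\setminus X_n$ is null. The complement of the intersection satisfies $A\setminus\bigcap_{n\in\N}X_n=\bigcup_{n\in\N}(A\setminus X_n)$, and countable subadditivity of $\m$ gives $\m\big(\bigcup_{n\in\N}(A\setminus X_n)\big)\le\sum_{n\in\N}\m(A\setminus X_n)=0$. Hence $\bigcap_{n\in\N}X_n$ has measure $1$, as claimed.

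For part (b), I would invoke Fubini's theorem for the null set $Z:=(A\times B)\setminus X$, which has $\m(Z)=0$ since $\m(A\times B)=1$ and $\m(X)=1$. Writing $Z_x:=\{y\in B\mid (x,y)\in Z\}$ for the $x$-section, Fubini gives $\int_A \m(Z_x)\,d\m(x)=\m(Z)=0$. As the integrand $x\mapsto \m(Z_x)$ is non-negative, it must vanish for $\m$-almost every $x$; that is, the set $\{x\in A\mid \m(Z_x)=0\}$ has measure $1$. Finally, since $\m(B)=1$ and $\{y\in B\mid (x,y)\in X\}$ is precisely the complement $B\setminus Z_x$, the condition $\m(Z_x)=0$ is equivalent to $\m(\{y\in B\mid (x,y)\in X\})=1$. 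This identifies the measure-$1$ set just obtained with the set in the statement, completing the proof.

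I do not expect a genuine obstacle here, as both items are textbook facts; the only point requiring any care is the measurability of the sections $Z_x$ and of the function $x\mapsto\m(Z_x)$, which is needed to make sense of the integral in the application of Fubini for part (b). The blanket assumption made just before the proposition — that all sets one has to deal with are measurable — removes even this concern, so part (a) reduces to countable subadditivity and part (b) to Fubini's theorem for the product measure $\m$.
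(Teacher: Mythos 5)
Your proof is correct: part (a) is exactly the countable-subadditivity argument on complements, and part (b) is the standard Fubini argument applied to the null complement, with the measurability issues covered by the paper's blanket assumption that all sets in play are measurable (they are Suslin sets, for which Proposition \ref{prop.unif} supplies the needed structure). Note that the paper itself offers no proof at all --- Proposition \ref{measure} is explicitly introduced as a collection of ``basic results of measure theory\dots used without reference'' --- so your write-up simply supplies the canonical textbook argument the authors had in mind.
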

We shall make use of the general machinery on measure-theoretic uniformity for $\bf S$ from \cite{Suslin}, summarised in Definition \ref{def.unif} and Proposition \ref{prop.unif}. Our construction of $\Lambda_{\SS}$ in Section \ref{constr} will be an adjustment of the construction of  $\Lambda_{\exists^2}$ from \cite{dagsamII} to the computability theory of $\bf S$. The technical details of the constructions of $\Lambda_{\exists^{2}}$ and $\Lambda_{\SS}$ are quite similar, and we refer the interested reader to \cite{dagsamII}; we shall rather focus on the underlying intuition.  
\begin{definition}[$C$-sets]\label{def.unif} Let $\textup{\seq}$ be the set of finite sequences of integers.
\begin{itemize}
\item[(a)] A \emph{Suslin scheme} on a set $X$ is a map $s \mapsto P_s$ sending $s\in \textup{\seq}$ to $P_s\subseteq X$.
\item[(b)] If $ {\bf P} = \{P_s\}_{s \in {\rm \seq}}$ is a Suslin scheme, then define ${\bf A}({\bf P}) = \bigcup_{f \in \N^\N} \bigcap_{n \in \N} P_{\bar fn}.$
The functional {\bf A} is known as the \emph{Suslin operator}.
\item[(c)] The \emph{$C$-sets} in $\N^\N$ and related spaces are the elements of the least set algebra containing the open sets and being closed under the Suslin operator.
\end{itemize}
\end{definition}
The notion of $C$-set was first introduced in \cite{severeruss} and is also studied in e.g.\ \cites{hinman, seker, kleinburgerlijk}, a fact unknown to the authors before the beginning of 2020.
Indeed, in previous versions of this paper and \cite{Suslin}, $C$-sets were called `Suslin sets'.    
All $C$-sets have codes  in $\N^\N$ in analogy with the coding of Borel sets and the set of such codes has $\Pi^1_1$-complexity.
The next proposition is proved in detail in \cite{Suslin}.
\begin{proposition}\label{prop.unif} \hspace*{10mm}
\begin{itemize}
\item[(a)] If $A \subseteq \N^\N$ is a $C$-set, then $A$ is computable in $\bf S$ uniformly in any code for $A$. There is a countable upper bound on the ordinal ranks of the computations needed to determine membership in $A$ from $\bf S$.
\item[(b)] If $A \subseteq \N^\N$ is computable in $\bf S$ and $f$ with computation-ranks bounded by the countable ordinal $\alpha$, then $A$ is a $C$-set and there is a code for $A$ computable in $\bf S$, $f$ and any 
$\N^\N$-code for $\alpha$.
\item[(c)] If $A \subset C$ is a $C$-set, then ${\bf m}(A)$ is computable in $\bf S$ and a code for $A$.
\item[(d)] If $A \subseteq C$ is computable in $\bf S$ and ${\bf m}(A) > 0$, then $A$ contains an element computable in $\bf S$. This basis theorem can be relativised to any $f \in \N^\N$.
\item[(e)] The algebra of $C$-sets is a $\sigma$-algebra, i.e. closed under countable unions and complements, and thus contains the Borel sets.
\end{itemize}
\end{proposition}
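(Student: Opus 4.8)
The plan is to treat the five items as facets of a single correspondence: that membership in a Suslin set, decidability of that membership by ${\bf S}$ under S1--S9, and a countable bound on the ordinal ranks of the associated computations, all coincide. Items (a) and (b) are the two directions of this correspondence, and (c)--(e) are consequences. I would first prove (a) by transfinite induction on the rank of the well-founded tree underlying a code for a Suslin set $A$; recall that validity of a code amounts to well-foundedness of this construction tree, which is exactly why the set of codes has $\Pi^1_1$-complexity and why ${\bf S}$ can recognise codes. For an open set, membership is $\Sigma^0_1$ and hence decided by $\exists^2$, which is computable in ${\bf S}$; the complement step merely negates the answer. The Suslin-operation node is the crux: $x\in{\bf A}({\bf P})$ iff $(\exists f)(\forall n)(x\in P_{\overline{f}n})$, and the induction hypothesis gives that membership in each $P_s$ is ${\bf S}$-computable, uniformly in $s$, with ranks below the current level. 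The point is that ${\bf S}$, being a normal type-two functional deciding $\Sigma^1_1$, is closed under precisely this $(\exists f)(\forall n)$-quantification by the reflection and stage-comparison properties of Kleene recursion in normal functionals; this both keeps membership ${\bf S}$-computable and furnishes the uniform countable rank bound asserted at the end of (a).

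The converse (b) is the dual induction, now on the ordinal rank $\alpha$ of an ${\bf S}$-computation deciding membership in $A$. Tracing the S1--S9 clauses, each clause is either arithmetic (absorbed into the open-set and algebra operations) or an application of ${\bf S}$, which introduces exactly one Suslin operation; assembling these along the computation tree yields a Suslin scheme, hence a code, computable in ${\bf S}$, $f$, and any $\N^\N$-code for $\alpha$ (the last being needed to lay out the transfinitely many stages). Item (e) is then bookkeeping: the defining closure under ${\bf A}$ already yields countable unions, since $\bigcup_n A_n = {\bf A}({\bf P})$ for the scheme reading off $A_n$ from the first coordinate $n$ of the branch, while closure under complement is built in; thus the algebra is a $\sigma$-algebra and, containing the open sets, contains all Borel sets.

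For (c) I would exploit regularity together with (a): a Suslin set is Lebesgue measurable, and its measure is approximated along the transfinite stages $A_{\xi}$ of the Suslin construction, ${\bf m}(A)=\lim_{\xi}{\bf m}(A_{\xi})$. Since (a) caps the ranks at a fixed countable ordinal, this limit is attained by a stage that ${\bf S}$ can reach, and each ${\bf m}(A_{\xi})$ is computable from ${\bf S}$ and the code because the approximants are effectively Borel. For the measure-theoretic basis theorem (d) I would build an ${\bf S}$-computable point of $A$ bitwise: given ${\bf m}(A)>0$, at each stage extend the current binary string $\sigma$ by the bit $i$ for which the relative measure of $A\cap[\sigma*\langle i\rangle]$ stays bounded away from $0$; such a bit exists by additivity, and the choice is ${\bf S}$-computable by (c). The resulting branch lies in $A$ by the Lebesgue density theorem and is computable in ${\bf S}$, and relativising each step to a parameter $f$ yields the stated relativisation.

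The main obstacle is the Suslin-operation step of (a) together with its rank bookkeeping, namely showing that ${\bf S}$-computability is closed under $(\exists f)(\forall n)$ while the ordinal ranks stay below a single countable bound. A naive count suggests that iterating the Suslin operation over ${\bf S}$-decidable matrices should climb beyond $\Sigma^1_1$ (the relativised predicate is prima facie $\Sigma^1_1({\bf S})$), and the reason it does not is exactly the normality of ${\bf S}$: the uniform rank bound lets one absorb all lower-level computations into a single ${\bf S}$-computable real, relative to which the matrix becomes arithmetic and the existential function quantifier genuinely $\Sigma^1_1$, hence decided by ${\bf S}$ again. This selection/reflection argument is the delicate core that \cite{Suslin} carries out in full, and on which the measure-theoretic items (c) and (d) ultimately rest.
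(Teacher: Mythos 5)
First, a point of comparison: the paper itself gives no proof of this proposition; it is stated as a summary of results established elsewhere, with an explicit pointer that the proof is carried out in detail in \cite{Suslin}. So your attempt can only be measured against that external development. Your outline of (a), (b) and (e) is sound in shape: in particular your closing observation --- that the uniform countable rank bound lets one absorb the family $s \mapsto \chi_{P_s}(x)$ into a single ${\bf S}$-computable real $g$, after which $\exists f\,\forall n\,(g(\overline{f}n)=1)$ is a genuine $\Sigma^1_1(g)$ statement decided by one application of ${\bf S}$ --- is exactly the right mechanism for the Suslin-operation step of (a), and the dual induction on computation rank for (b), with each S8-application of ${\bf S}$ contributing one Suslin operation, is the right decomposition.

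However, your argument for (d) has a genuine gap. Building $x$ bit by bit so that the (relative) measure of $A\cap[\sigma_n]$ stays positive at every finite stage does \emph{not} place the limit point $x$ in $A$: take $A = C\setminus\{y\}$, a set of measure $1$; the greedy construction (under any tie-breaking rule) can output exactly $x=y\notin A$, even though $A$ has relative measure $1$ in every $[\sigma_n]$. The Lebesgue density theorem does not repair this: it says that almost every point \emph{of} $A$ is a density point of $A$, not that a point in all of whose neighbourhoods $A$ has positive measure must belong to $A$ (that conclusion is only valid for closed $A$). What the proof actually requires is effective inner regularity (capacitability): from the Suslin code one must extract a \emph{closed} subset $K\subseteq A$ of positive measure whose tree is computable in ${\bf S}$ --- for the Suslin operation this is Lusin's hard-core construction, carried out effectively through the scheme $\{P_s\}$ and the induction on the code --- and then take, say, the leftmost branch of $K$. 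That extraction is the real content of (d), and it is also what your sketch of (c) silently leans on: there, too, the claim that the limit of the measures of the transfinite approximants is ``attained by a stage ${\bf S}$ can reach'' is unjustified as stated (such limits need only converge, not be attained), and the computability of ${\bf m}(A)$ in ${\bf S}$ and the code again comes from the effective capacitability argument rather than from the approximants being ``effectively Borel.''
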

We let $\omega_1^{{\bf S},g}$ be the first ordinal that is not computable in $\bf S$ and $g$, while $\omega_{1}^{\bf S}$ is $\omega_{1}^{\bf S, \emptyset}$. We also need the following result from \cite{Suslin}.
\begin{proposition}\label{propmeas} The set $\{g \in C:\omega_1^{\bf S} = \omega_1^{{\bf S},g}\}$ has measure 1.
\end{proposition}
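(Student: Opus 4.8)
The plan is to prove the complementary set $B := \{g\in C : \omega_1^{{\bf S},g} > \omega_1^{\bf S}\}$ is ${\bf m}$-null; since adjoining an oracle can only increase the supremum of the ordinals computable from ${\bf S}$, we always have $\omega_1^{{\bf S},g}\geq \omega_1^{\bf S}$, so ${\bf m}(B)=0$ is exactly the assertion. First I would unfold $B$ recursion-theoretically: $\omega_1^{{\bf S},g}>\omega_1^{\bf S}$ holds iff $g$ together with ${\bf S}$ computes a well-order of $\N$ of order type $\geq \omega_1^{\bf S}$, so $B=\bigcup_{e\in\N}B_e$, where $B_e$ collects the $g$ for which $\{e\}^{{\bf S},g}$ is a well-order of such order type. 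By countable subadditivity of ${\bf m}$ it then suffices to show ${\bf m}(B_e)=0$ for each fixed $e$.

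Fix $e$, write $\prec_g := \{e\}^{{\bf S},g}$ and let $\mathrm{WO}_e:=\{g : \prec_g$ is a well-order$\}$, a $\Pi^1_1({\bf S})$ set, hence Suslin with an ${\bf S}$-computable code since the Suslin sets form a $\sigma$-algebra containing the Borel sets (Proposition \ref{prop.unif}(e)). For each ${\bf S}$-computable $\alpha<\omega_1^{\bf S}$ fix an ${\bf S}$-computable well-order $a_\alpha$ of type $\alpha$ and set $X_\alpha:=\{g : \prec_g$ is a well-order of type $<\alpha\}$. Deciding membership in $X_\alpha$ reduces to comparing $\prec_g$ with $a_\alpha$, which is computable in ${\bf S}$, $g$ and $a_\alpha$ at bounded ranks; by Proposition \ref{prop.unif}(b) each $X_\alpha$ is Suslin with a code computable in ${\bf S}$ (as $a_\alpha$ is), and by Proposition \ref{prop.unif}(c) the real ${\bf m}(X_\alpha)$ is ${\bf S}$-computable uniformly in a code for $\alpha$. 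The sets $X_\alpha$ increase in $\alpha$ with $\bigcup_{\alpha<\omega_1^{\bf S}}X_\alpha=\mathrm{WO}_e\setminus B_e$, so continuity of ${\bf m}$ from below (along a cofinal $\omega$-sequence in the countable ordinal $\omega_1^{\bf S}$) gives $\sup_{\alpha<\omega_1^{\bf S}}{\bf m}(X_\alpha)={\bf m}(\mathrm{WO}_e)-{\bf m}(B_e)$, with ${\bf m}(\mathrm{WO}_e)$ itself ${\bf S}$-computable by Proposition \ref{prop.unif}(c).

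The heart of the argument is to turn a hypothetical ${\bf m}(B_e)=\delta>0$ into a contradiction with the admissibility of $\omega_1^{\bf S}$, i.e.\ with the fact that no ${\bf S}$-computable function is cofinal into $\omega_1^{\bf S}$. The benign case is when the increasing ${\bf S}$-computable map $\alpha\mapsto {\bf m}(X_\alpha)$ does \emph{not} attain its supremum below $\omega_1^{\bf S}$: searching ${\bf S}$-computably through codes for ${\bf S}$-computable well-orders one selects ordinals $\alpha_n$ with ${\bf m}(X_{\alpha_n})\nearrow {\bf m}(\mathrm{WO}_e)-\delta$, and these $\alpha_n$ must be cofinal in $\omega_1^{\bf S}$ (were they bounded by some $\gamma<\omega_1^{\bf S}$ the supremum would already be attained at $\gamma$), yielding the forbidden ${\bf S}$-computable cofinal sequence.

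I expect the genuine obstacle to be the remaining possibility of a positive measure ``jump'' at $\omega_1^{\bf S}$: that the supremum is attained at some ${\bf S}$-computable $\beta^{\ast}$ while still ${\bf m}(B_e)>0$. Here the elementary tools fail, because the ${\bf S}$-computable elements delivered by the basis theorem (Proposition \ref{prop.unif}(d)) for the positive-measure Suslin set $\{g\in\mathrm{WO}_e : \prec_g$ has order type $\geq\beta^{\ast}\}$ are precisely the ${\bf m}$-null exceptions of low rank and do not land in $B_e$. Excluding this jump is the Sacks-type measure-boundedness phenomenon from \cite{Suslin}: the uniform countable bound on computation ranks in Proposition \ref{prop.unif}(a) forces the ${\bf S}$-computable value ${\bf m}(\mathrm{WO}_e)$ to be reflected by the stages $X_\alpha$ with $\alpha<\omega_1^{\bf S}$, whence almost every $g\in\mathrm{WO}_e$ has ${\bf S}$-computable rank and ${\bf m}(B_e)=0$. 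This reflection step is the crux; the rest is bookkeeping with Propositions \ref{measure} and \ref{prop.unif}.
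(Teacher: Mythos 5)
The first thing to note is that the paper itself contains no proof of this proposition: it is imported verbatim from \cite{Suslin} (``We also need the following result from \cite{Suslin}''), so there is no internal argument to measure your proposal against. Your skeleton is the right Sacks-style one, and it is surely the skeleton of the proof in \cite{Suslin}: reduce to ${\bf m}(B_e)=0$ for each Kleene index $e$, stratify $\mathrm{WO}_e$ into constituents $X_\alpha$ for ${\bf S}$-computable $\alpha<\omega_1^{\bf S}$, and rule out a measure jump at $\omega_1^{\bf S}$ using admissibility.

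However, the proposal has a genuine gap, and it sits exactly where you locate ``the crux''. First, your claim that $\mathrm{WO}_e$ is ``a $\Pi^1_1({\bf S})$ set, hence Suslin with an ${\bf S}$-computable code since the Suslin sets form a $\sigma$-algebra containing the Borel sets'' is wrong: co-semi-computability in ${\bf S}$ (which is what membership in $\mathrm{WO}_e$ amounts to, since it asks for totality of $\{e\}({\bf S},g)$ plus well-foundedness) lies strictly beyond the Suslin algebra, in exact analogy with the fact that a lightface $\Pi^1_1$ set is in general not Borel even though the Borel sets form a $\sigma$-algebra containing the open sets. Proposition \ref{prop.unif}(b) confers Suslin-ness only on sets computable in ${\bf S}$ with \emph{countably bounded} computation ranks, and for the very $g$ at issue (those with $\omega_1^{{\bf S},g}>\omega_1^{\bf S}$) the ranks of the computations $\{e\}({\bf S},g,n,m)$ need not be bounded below $\omega_1^{\bf S}$; for the same reason even your constituents $X_\alpha$ are only Suslin if one defines them with an explicit rank bound on the computations, not merely a bound on the order type. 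Consequently Proposition \ref{prop.unif}(c) does not apply to $\mathrm{WO}_e$, and the ${\bf S}$-computability of ${\bf m}(\mathrm{WO}_e)$ --- which your ``benign case'' search uses as its target value and which your final step also leans on --- is unjustified; it is in fact essentially equivalent to the reflection you are trying to establish. Second, and decisively: the step you yourself identify as the heart of the matter, namely ${\bf m}(\mathrm{WO}_e)=\sup_{\alpha<\omega_1^{\bf S}}{\bf m}(X_\alpha)$, is not proved but attributed to ``the Sacks-type measure-boundedness phenomenon from \cite{Suslin}''. That phenomenon \emph{is} the proposition: measure-theoretic uniformity for recursion in ${\bf S}$ is precisely what the paper cites \cite{Suslin} for, so invoking it is circular, and the appeal to Proposition \ref{prop.unif}(a) cannot substitute for it, since that clause again applies only to Suslin sets. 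A self-contained proof has to establish this reflection directly --- e.g.\ by showing that semi-computable-in-${\bf S}$ sets are approximable in measure from inside by Suslin sets via a boundedness argument over the ${\bf S}$-admissible structure, or by a random-forcing argument over that structure --- and that content is in \cite{Suslin}, not in anything available inside this paper.
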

Unless specified otherwise, the sets $X,Y, Z$ considered below are computable in ${\bf S}$, possibly from parameters and at a countable level, i.e.\ they are $C$-sets. Without pointing this out every time, we make use of the following result from \cite{Suslin}.
\begin{proposition} 
If $A$ and $B$ are measure-spaces as above and $X \subseteq A \times B$ is a $C$-set, then $\{x \in A \mid {\bf m}(\{y \in B \mid (x,y) \in X\}) = 1\}$
is a $C$-set with a code computable from ${\bf S}$ and any code for $X$. 
\end{proposition}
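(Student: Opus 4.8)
The plan is to reduce the statement to the effective measurability, in the sense of Proposition \ref{prop.unif}, of the section-measure function $x \mapsto {\bf m}(X_x)$, where $X_x := \{y \in B \mid (x,y) \in X\}$. Since ${\bf m}$ is a probability measure, ${\bf m}(X_x) = 1$ holds exactly when ${\bf m}(X_x) > 1 - 2^{-n}$ for every $n$, so that
\[
Z := \{x \in A \mid {\bf m}(X_x) = 1\} = \bigcap_{n \in \N} \{x \in A \mid {\bf m}(X_x) > 1 - 2^{-n}\}.
\]
By Proposition \ref{prop.unif}(e) the Suslin sets form a $\sigma$-algebra, and this closure is effective in the codes; hence it suffices to prove the uniform lemma that for each rational $r$ the superlevel set $Z_r := \{x \in A \mid {\bf m}(X_x) > r\}$ is a Suslin set with a code computable from ${\bf S}$ and any code $c$ for $X$, uniformly in $r$. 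Granting the lemma, the codes for the sets $Z_{1-2^{-n}}$ are obtained uniformly in $n$ from ${\bf S}$ and $c$, and the effective $\sigma$-algebra closure of Proposition \ref{prop.unif}(e) then delivers a code for $Z$ computable from ${\bf S}$ and $c$.

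First I would establish that the section $X_x$ is itself a Suslin set whose code $c_x$ is computable from $c$ and $x$: writing $X = {\bf A}({\bf P})$ for a Suslin scheme ${\bf P} = \{P_s\}$ of closed subsets of $A \times B$ coded by $c$ as in Definition \ref{def.unif}, the fibre scheme $\{(P_s)_x\}$ is closed in $B$ with code arithmetic in $c$ and $x$, and the operator ${\bf A}$ commutes with fibering. Applying Proposition \ref{prop.unif}(c) to $X_x$ then shows that the real ${\bf m}(X_x)$ is computable from ${\bf S}$ and $c_x$, hence from ${\bf S}$, $c$ and $x$; comparing with $r$ shows that membership in $Z_r$ is decided by a single ${\bf S}$-computation relative to the parameter $c$. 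To convert this into a Suslin code I would invoke Proposition \ref{prop.unif}(b), whose hypothesis is a uniform countable bound on the ranks of these computations.

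Securing that uniform rank bound is the main obstacle, since the computation deciding $x \in Z_r$ has rank only below the non-uniform ordinal $\omega_1^{{\bf S},c,x}$, so Proposition \ref{prop.unif}(b) does not apply off the shelf. The resolution combines two ingredients. Classically, in our $\ZFC$ metatheory, the superlevel set of the section measure of an analytic set is again analytic, hence Suslin; the standard proof represents $Z_r$ by an existential real quantifier over approximating witnesses extracted from the scheme ${\bf P}$, with the section measures ${\bf m}(X_x)$ entering as ${\bf S}$-computable reals via Proposition \ref{prop.unif}(c), and this argument is uniform in the code $c$. Proposition \ref{propmeas}, relativised to $c$, then supplies the uniform countable bound $\omega_1^{{\bf S},c}$ valid on the measure-one set $G := \{x \mid \omega_1^{{\bf S},c,x} = \omega_1^{{\bf S},c}\}$, which is exactly what certifies that the computations above stay at a bounded level and hence, through Proposition \ref{prop.unif}(b), keep $Z_r$ inside the algebra of Suslin sets with a code computable from ${\bf S}$ and $c$. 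I expect this interplay — running the classical analytic-section argument while controlling the computation ranks by $\omega_1^{{\bf S},c}$ on a set of full measure — to be the delicate heart of the proof, and it is precisely the place where the favourable behaviour of ${\bf S}$ with respect to the measure does the essential work.
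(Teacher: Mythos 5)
Your reduction of $Z$ to the superlevel sets $Z_r$ and your identification of the key obstacle --- a uniform countable bound on the ranks of the $\mathbf{S}$-computations deciding $x\in Z_r$, as demanded by Proposition \ref{prop.unif}(b) --- are both reasonable; note that the paper itself does not prove this proposition but imports it from \cite{Suslin}. The genuine gap is your resolution of that obstacle via Proposition \ref{propmeas}. Relativising Proposition \ref{propmeas} to the code $c$ only says that $G=\{x \mid \omega_1^{\mathbf{S},c,x}=\omega_1^{\mathbf{S},c}\}$ has measure one, and this cannot feed Proposition \ref{prop.unif}(b), for three reasons. First, the proposition claims that $Z_r$ \emph{itself} is Suslin with a code computable from $\mathbf{S}$ and $c$; controlling the computations only for $x\in G$ says nothing about $Z_r\setminus G$, and ``Suslin modulo a null set'' is not the statement being proved. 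Indeed, over \emph{all} $x\in A$ the pointwise rank bounds $\omega_1^{\mathbf{S},c,x}$ have supremum $\omega_1$, which is uncountable, so without a structural uniformity argument Proposition \ref{prop.unif}(b) is simply inapplicable to the full set. Second, even on $G$ you only obtain, for each $x$ separately, a rank below $\omega_1^{\mathbf{S},c,x}=\omega_1^{\mathbf{S},c}$; these pointwise bounds need not lie below any single $\alpha<\omega_1^{\mathbf{S},c}$. Third, the only candidate uniform bound, $\omega_1^{\mathbf{S},c}$ itself, is useless here: Proposition \ref{prop.unif}(b) computes the Suslin code from $\mathbf{S}$, the parameter, \emph{and a code for the bound}, and by definition $\omega_1^{\mathbf{S},c}$ admits no code computable in $\mathbf{S}$ and $c$, so the conclusion ``code computable from $\mathbf{S}$ and any code for $X$'' would be lost exactly where it matters.

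The uniformity has to come from a different source: Proposition \ref{prop.unif}(a) applied to $X$ itself already provides a single countable bound, determined by the code $c$ alone and uniform over all points of $A\times B$, on the ranks of the membership computations; one must then verify that the measure computation of Proposition \ref{prop.unif}(c), applied to the section codes $c_x$ (which all share the structure of $c$), has ranks bounded uniformly in terms of that bound. This amounts to an effective recursion over the structure of the Suslin code --- handling complements, countable unions, and the operator $\mathbf{A}$ (the latter by an effective capacitability argument) --- and that recursion is precisely the content of the result cited from \cite{Suslin}. Relatedly, your classical template treats $X$ as $\mathbf{A}(\mathbf{P})$ for a closed scheme $\mathbf{P}$, which covers only the analytic case; the paper's Suslin sets (Definition \ref{def.unif}(c)) form the least \emph{algebra} containing the open sets and closed under $\mathbf{A}$, hence include complements of analytic sets and iterations beyond them, so the sectionwise argument must be run as an induction over this algebra rather than read off from the analytic case. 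Proposition \ref{propmeas} is not an ingredient of this proof at all; the paper uses it for a different purpose, in the proof of Theorem \ref{theorem.lambda}.
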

%

\begin{nota}The following notational conventions are used below: 
\begin{enumerate}
\renewcommand{\theenumi}{\alph{enumi}}
\item $a$, $b$, $c$ are numerical arguments or values in computations, while  $\vec{a}, \vec{b}, \vec{c}$ are finite sequences of such.
\item $i$, $j$, $n$, $m$ are integers for other purposes, such as indexing.
\item $f$, $g$, $h$ are elements of $C$, with finite sequences denoted $\vec{f}, \vec{g}, \vec{h}$.
\item $(f)$ denotes an infinite sequence $(f) = \{f_i\}_{i \in \N}$ from $C$. 
\item Given $(f)$ and $\vec c = (c_0 , \ldots ,c_{n-1})$, we define $(f)_{\vec c} := (f_{c_0}, \ldots , f_{c_{n-1}})$. 
\end{enumerate}
\end{nota}
\begin{definition} Let $F:Y \rightarrow \N$ where $Y \subset C$, and let $(f)$ be as above.
\begin{itemize}
\item[(a)] We say that $(f)$ is \textbf{\emph{sufficient}} for $F$ if $F(f_i)$ is defined for all $i$ and the set $\bigcup_{i \in \N}[\bar f_iF(f_i)]$ has measure 1.
\item[(b)] We say that $(f)$ \textbf{\emph{fails}} $F$ if $F(f_i)$ is undefined for some $i $.
\end{itemize}
\end{definition}
The intuition behind this definition is that $(f)$ is sufficient for $F$ if $(f)$ is an acceptable value for $\Lambda(F)$. 
The key to the construction of $\Lambda_{\bf S}$ is that if we have a parametrised family of total functionals $F_x$, then we can use the same value for $\Lambda(F_x)$ for almost all $x$; moreover, if this family is computable in $\bf S$, we may choose this common value to be computable in $\bf S$. We shall construct approximations to $\Lambda_{\bf S}$ by recursion on the ordinals below $\omega_1^{\bf S}$, and the property of \emph{failing} will be used as a technical means to avoid `wild cases' of termination relative to our end product.

\smallskip
The following lemma, that is trivial from the point of view of measure theory, makes our intuition precise and plays an important part in the construction of $\Lambda_{\bf S}$.  
Note that we use commas to denote concatenations of finite sequences from $C$, while we use $\langle \cdot \rangle$ when we consider sequences involving other kinds of objects. A detailed proof can be found in \cite{dagsamII}.
\begin{lemma}\label{hovedlemma}
Let $A$ be a finite product of $C$, and let $\vec g$ range over the elements of $A$. Let $\vec c$ be a non-repeating sequence of integers of length $k$, and let $F:Z \rightarrow \N$ where $Z \subseteq C \times C^{k}\times A$ is a measurable set.
If $Y \subseteq C^{k} \times A$ has measure 1, then
\begin{itemize}
\item[(a)] the set of $\langle(f),\vec g\rangle$ such that $(f)_{\vec c},\vec g \in Y$ has measure 1,
\item[(b)] the measure of the following set is $1$\textup{:} the set of $\langle(f),\vec g\rangle$ such that either
\[
\text{${\bf m}(\{f \mid f, (f)_{\vec c},\vec g \in Z\}) = 1$  and $(f)$ is sufficient for $\lambda f.F(f,(f)_{\vec c},\vec g)$},
\]
\begin{center}
or
\end{center}
\[
\text{${\bf m}(\{f \mid f, (f)_{\vec c},\vec g \in Z\}) < 1$ and $(f)$ fails $\lambda f.F(f,(f)_{\vec c},\vec g)$}.
\]
\end{itemize}
\end{lemma}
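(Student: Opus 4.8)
The plan is to treat the statement as a pure measure-theoretic fact about the product measure ${\bf m}$ on $C^{\N}\times A$, with $\langle (f),\vec g\rangle$ as the random object; the Suslin-set machinery of Proposition \ref{prop.unif} plays no role here, only measurability of $Z$ and $Y$ together with the basic facts of Proposition \ref{measure}. For (a) I would observe that because $\vec c$ is non-repeating, the selection map $\langle (f),\vec g\rangle \mapsto \langle (f)_{\vec c},\vec g\rangle$ from $C^{\N}\times A$ onto $C^{k}\times A$ pushes ${\bf m}$ forward to the product measure: the coordinates $f_{c_0},\dots,f_{c_{k-1}}$ are distinct, hence mutually independent and uniform, and independent of $\vec g$. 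The set in (a) is exactly the preimage of $Y$ under this map, so its measure equals ${\bf m}(Y)=1$.

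For (b) the engine is Fubini. I would condition on the value of the parameter $\langle \vec u,\vec g\rangle := \langle (f)_{\vec c},\vec g\rangle$ and argue separately on the two regions cut out by the fibre measure ${\bf m}(Z_{\vec u,\vec g})$, where $Z_{\vec u,\vec g}:=\{h\in C : (h,\vec u,\vec g)\in Z\}$. The crucial point, again using that $\vec c$ is non-repeating, is that once the parameter is fixed the remaining coordinates $\{f_j : j\notin\vec c\}$ are still i.i.d.\ uniform on $C$ and independent of the parameter; these are the coordinates that do the work, so the argument reduces to a statement about a single i.i.d.\ sequence in $C$ tested against a fixed measurable target.

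On the region where ${\bf m}(Z_{\vec u,\vec g})<1$, each of the infinitely many free coordinates lies outside $Z_{\vec u,\vec g}$ with the fixed positive probability $1-{\bf m}(Z_{\vec u,\vec g})$, so the probability that all lie inside is $\lim_n {\bf m}(Z_{\vec u,\vec g})^{n}=0$; hence almost surely some $f_j\notin Z_{\vec u,\vec g}$, i.e. $F(f_j,(f)_{\vec c},\vec g)$ is undefined and $(f)$ fails. On the region where ${\bf m}(Z_{\vec u,\vec g})=1$, almost surely every $f_j$ lies in $Z_{\vec u,\vec g}$ (so all values are defined, using Proposition \ref{measure}(a) on the countable intersection), and it remains to see that $\bigcup_i[\bar f_i F(f_i,\vec u,\vec g)]$ has measure $1$ almost surely. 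For this I would decompose $Z_{\vec u,\vec g}$ into the countably many cells $A_s:=\{h\in Z_{\vec u,\vec g} : \bar h|s|=s\ \wedge\ F(h,\vec u,\vec g)=|s|\}$ indexed by finite binary strings $s$; these are disjoint with $\bigcup_s A_s=Z_{\vec u,\vec g}$, so $\sum_s {\bf m}(A_s)=1$. For every $s$ with ${\bf m}(A_s)>0$ the event ``some $f_i\in A_s$'' has probability $1$, and there are only countably many such $s$, so almost surely every positive-measure cell is hit (Proposition \ref{measure}(a)); when $A_s$ is hit, the neighbourhood $[s]\supseteq A_s$ enters the cover, whence the cover contains $\bigcup_{{\bf m}(A_s)>0}[s]$, a set of measure $\ge\sum_{{\bf m}(A_s)>0}{\bf m}(A_s)=1$. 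Integrating these two conditional statements over the parameter by Fubini, using Proposition \ref{measure}(b) to pass from ``almost every fibre'' to ``measure one in the product'', yields (b).

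The main obstacle is the covering step on the region ${\bf m}(Z_{\vec u,\vec g})=1$: one must show that sampling countably many i.i.d.\ points almost surely produces a cover of full measure, which is precisely where the cell decomposition and the countability of the set of strings are essential. A secondary but pervasive difficulty is the bookkeeping forced by the fact that the parameter $(f)_{\vec c}$ is read off the very sequence $(f)$ being tested: one must keep the $\vec c$-coordinates and the free coordinates cleanly separated (legitimate precisely because $\vec c$ is non-repeating), so that conditioning on the parameter leaves an honest i.i.d.\ family of free coordinates to carry both the ``fails'' and the ``sufficient'' conclusions. The detailed verification of these measurability and independence points is what is carried out in full in \cite{dagsamII}.
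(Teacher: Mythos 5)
Your overall strategy---pure measure theory, splitting the coordinates of $(f)$ into the $\vec c$-selected ones and the free ones, conditioning on the parameter $\langle (f)_{\vec c},\vec g\rangle$, a Borel--Cantelli-type argument on each region, and the cell decomposition $A_s$ for the covering step---is exactly the argument the paper intends: the paper gives no proof of its own, calling the lemma ``trivial from the point of view of measure theory'' and deferring all details to \cite{dagsamII}. Part (a), the branch ${\bf m}(Z_{\vec u,\vec g})<1$, and your covering argument on the full-measure branch are correct, granting the paper's blanket convention that every set in sight is measurable (note the statement assumes only $Z$, not $F$, to be measurable, so your cells $A_s$ and the event in (b) itself need that convention).

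The genuine gap is the parenthetical ``(so all values are defined)'' on the region ${\bf m}(Z_{\vec u,\vec g})=1$. Almost-sure membership in the fibre holds only for the \emph{free} coordinates $f_j$, $j\notin\vec c$. Sufficiency, by the paper's definition, demands that $F(f_i,(f)_{\vec c},\vec g)$ be defined for \emph{all} $i$, including $i=c_m$, and whether $F(u_m,\vec u,\vec g)$ is defined is a deterministic property of the parameter that no conditioning on the free coordinates can repair. It can fail for every parameter even though every fibre has measure $1$: take $k=1$, $\vec c=\langle 0\rangle$, $Z=\{(h,u,\vec g)\mid h\neq u\}$, and $F\equiv 0$ on $Z$. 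Then ${\bf m}(Z_{(f)_{\vec c},\vec g})=1$ for every $\langle (f),\vec g\rangle$, but $F(f_0,(f)_{\vec c},\vec g)$ is never defined, so $(f)$ always fails and is never sufficient; the set in (b) is then empty, not of measure $1$. This shows both that your step fails and that it cannot be patched for the dichotomy exactly as stated, since the statement ties ``fails'' to the condition ${\bf m}(\cdot)<1$. What your argument does prove---and what the construction of $\Lambda_{\bf S}$ in Section \ref{constr} actually needs---is the decoupled version: almost every $\langle (f),\vec g\rangle$ satisfies either ``$(f)$ fails $\lambda f.F(f,(f)_{\vec c},\vec g)$'' or ``${\bf m}(Z_{(f)_{\vec c},\vec g})=1$ and $(f)$ is sufficient''. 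Your closing remark that non-repetition of $\vec c$ legitimises the bookkeeping does not close this: non-repetition buys independence of the free coordinates from the parameter, but says nothing about definedness of $F$ at the parameter coordinates themselves, which is precisely where the stated dichotomy breaks.
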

The conclusions of the lemma do not change if we restrict $(f)$ to sequences from a subset $X$ of $C$ of measure 1. The requirement that $\vec c$ is non-repeating is essential, since otherwise the set of $(f)_{\vec c}$ will have measure 0.

\subsection{The construction of a weak $\Lambda$-functional} \label{constr}
We construct the $\Lambda$-functional $\Lambda_{\bf S}$ and show that $\Lambda_{\bf S}+{\bf S}$ computes the same functions as $\bf S$. 
\smallskip

The following partial ordering is crucial to our construction of $\Lambda_{\bf S}$.
\begin{lemma}\label{lemma.useful}
There is a well-ordering $(A,\prec)$ of a subset of $\N$ of order type $\omega_1^{\bf S}$, semi-computable in $\bf S$, such that for each $a \in A$, $\{\langle b,c\rangle \mid b \prec c \prec  a\}$ is computable in $\bf S$, uniformly in $a$.
\end{lemma}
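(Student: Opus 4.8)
The plan is to realise $\omega_1^{\bf S}$ concretely as the order type of the $\bf S$-computable ordinals, each named by a \emph{canonical} natural number, with the comparison relation supplied by ${\bf S}$. First I would record the two facts that make everything $\bf S$-effective, both consequences of the uniformity machinery in Proposition \ref{prop.unif} (from \cite{Suslin}). On one hand, given a number $e$ such that $\{e\}^{\bf S}$ is a total binary relation $R_e$ on a subset of $\N$, the assertion ``$R_e$ is a well-ordering'' is $\Pi^1_1$ in $R_e$ and hence decided by ${\bf S}$; on the other hand, given two such well-orderings $R_{e},R_{e'}$, the relation $\textup{otp}(R_{e})\le \textup{otp}(R_{e'})$ is the $\Sigma^1_1$ statement ``there is an order embedding of $R_e$ onto an initial segment of $R_{e'}$'', so it too is decided by ${\bf S}$, uniformly in $e,e'$. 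These comparisons are the engine of the construction.

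Next I would define the field and the order. Let $A$ be the set of $a=\langle e,n\rangle$ such that $\{e\}^{\bf S}$ is a total well-ordering $R_e$ with $n$ in its field, and such that $\langle e,n\rangle$ is the $<_{\N}$-least pair whose associated initial segment $R_e\restriction n$ has that order type; set $\langle e,n\rangle \prec \langle e',n'\rangle$ iff $\textup{otp}(R_e\restriction n)<\textup{otp}(R_{e'}\restriction n')$. By the minimality clause each $\bf S$-computable ordinal has exactly one code in $A$, so $\prec$ is a genuine well-ordering and $(A,\prec)$ is order-isomorphic to the ordered set of $\bf S$-computable ordinals; since the latter are precisely the ordinals below $\omega_1^{\bf S}$ (by definition of $\omega_1^{\bf S}$ together with Proposition \ref{prop.unif}(a)--(b)), the order type is exactly $\omega_1^{\bf S}$. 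That $A$ is \emph{semi}-computable in ${\bf S}$ rather than computable comes from the totality and minimality clauses, which require an unbounded $\Sigma$-search over $\bf S$-computations and cannot be decided outright; this is the expected state of affairs, mirroring the classical fact that an admissible ordinal carries a $\Sigma_1$-definable but not $\Delta_1$ well-ordering of itself.

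Finally, and this is where the real work lies, I would verify that for fixed $a\in A$ representing the ordinal $\xi<\omega_1^{\bf S}$ the restricted relation $\{\langle b,c\rangle: b\prec c\prec a\}$ is computable in ${\bf S}$, uniformly in $a$. The point is that $\xi$ is itself an $\bf S$-computable ordinal, explicitly witnessed by $R_e\restriction n$, so all comparisons needed below $a$ take place beneath a \emph{fixed} $\bf S$-computable ordinal and are therefore bounded computations. Concretely, the function sending each $\eta<\xi$ to its least code is $\bf S$-computable (via the comparison engine above) and total on an $\bf S$-computable set, so by $\Sigma$-boundedness — i.e.\ the admissibility of $\omega_1^{\bf S}$, which for the normal type-$2$ functional ${\bf S}\ge \exists^2$ is standard (see \cite{Sacks.high} and \cite{Suslin}) — its range is bounded by a number $B(a)$ computable in ${\bf S}$ from $a$. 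Deciding $b\prec c\prec a$ then reduces to inspecting codes $\le B(a)$, checking by ${\bf S}$ that they name well-orderings of type $<\xi$, and comparing, all uniformly in $a$.

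I anticipate the main obstacle to be exactly this last uniformity: proving that the proper initial segments are genuinely ${\bf S}$-\emph{computable} (decidable), not merely semi-computable like the whole of $\prec$. This hinges on the boundedness/reflection principle for $\bf S$-recursion — that an $\bf S$-computable function which is total on the ordinals below an $\bf S$-computable $\xi$ has $\bf S$-computable range — which is the effective content of the admissibility of $\omega_1^{\bf S}$ and is packaged in the rank-analysis of Proposition \ref{prop.unif}. A secondary technical point, to be dispatched using the totality and minimality clauses, is checking that $\prec$ is well-defined and linear on all of $A$, i.e.\ that distinct canonical codes always carry comparable and distinct ordinal values.
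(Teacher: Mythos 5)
Your construction founders on a point you partly anticipate but do not resolve: for indices of ${\bf S}$-computable relations, \emph{totality} is only semi-decidable in ${\bf S}$, never decidable, and this infects every clause of your definition. Concretely, from any index $e_0$ one can effectively (via S1--S9) produce an index $e'$ such that $\{e'\}({\bf S},\cdot,\cdot)$ is total (and then codes the five-element well-ordering) if and only if $\{e_0\}({\bf S},0)$ converges, and diverges everywhere otherwise; hence the predicate ``$\langle e',n'\rangle$ codes a well-ordered initial segment of type $<6$'' is as hard as the ${\bf S}$-halting problem and is \emph{not} ${\bf S}$-decidable. This has two consequences. First, your minimality clause destroys the semi-computability of $A$: verifying that $\langle e,n\rangle$ is the \emph{least} code of its order type requires verifying, for each smaller pair, a disjunction whose first disjunct is ``$\{e'\}^{\bf S}$ is not total'', and non-totality is not ${\bf S}$-semi-verifiable (if it were, totality would be ${\bf S}$-decidable by the standard selection argument, contradicting the reduction above). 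Second, and more fatally, the step you call ``the real work'' does not go through: the function sending $\eta<\xi$ to its least code is not ${\bf S}$-computable, because any computation of a \emph{least} code must rule out all smaller candidate pairs, i.e.\ decide code-hood (totality) for them. Gandy selection yields \emph{some} code of each type $\eta$, but not the least one, and $\Sigma$-boundedness cannot be applied to a function whose computability was never established. Nor does bounding the search by $B(a)$ help: the obstruction is not that the search is unbounded, but that each individual test ``is this pair a code?'' is undecidable, so inspecting finitely many candidates is no easier, uniformly in $a$.

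The paper's proof sidesteps all of this with a different choice of codes: $A$ is the set of \emph{convergent computation tuples} $\langle e,\vec a,b\rangle$ with $\{e\}({\bf S},\vec a)=b$, ordered by the ordinal norm $\|\cdot\|$ of the computation, with ties broken by numerical order. Semi-computability of $A$ is then immediate (it is the ${\bf S}$-halting set), the order type is $\omega_1^{\bf S}$ because the norms of convergent computations realize exactly the ordinals below $\omega_1^{\bf S}$, and the uniform ${\bf S}$-decidability of $\{\langle b,c\rangle : b\prec c\prec a\}$ is precisely Gandy's stage-comparison theorem for Kleene recursion in a normal type-two functional: given a \emph{convergent} computation $a$, one can decide, uniformly in $a$, whether an arbitrary tuple is a convergent computation of norm below $\|a\|$, and compare norms of tuples below that bound. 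Stage comparison is exactly the tool that replaces your missing ``recognize the codes below $a$'' step; arbitrary indices of ${\bf S}$-computable well-orderings admit no such recognition procedure, which is why a computation-based (or notation-style) coding, rather than a coding by well-ordering indices, is what makes the lemma true.
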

\begin{proof}
We let $A$ be the set of computation tuples $a = \langle e,\vec a , b\rangle$ such that $\{e\}({\bf S},\vec a) = b$ with the norm $|| \cdot ||$, and we let $a \prec a'$ if $||a|| < ||a'||$, or if $||a|| = ||a'||$ and $a < a'$.
This ordering has the desired properties. 
\end{proof}
We introduce some more notation.
\begin{nota}{\em
We  let $[f] $ denote a family $[f] = \{(f_a)\}_{a \in A} = \{\{f_{a,i}\}_{i \in \N}\}_{a \in A}.$
 When we have fixed $[f]$, and $a \in A$, we let $[f]_a$ be $[f]$ restricted to $\{b \in A \mid b \preceq a\}$ and we let $[f]_{\prec a}$ be $[f]$ restricted to $\{b \in A \mid b \prec a\}$.
}\end{nota}
\begin{definition}{\em  Let $[f]$ be as above. We define $\Lambda_{[f]}$ as the partial functional, accepting partial functionals of type 2 as inputs, as follows: $\Lambda_{[f]}(F) := \{f_{a,i}\}_{i \in \N}$ if
\begin{itemize}
\item[(a)] $(f_a)$ is sufficient for $F$,
\item[(b)] $F(f_{b,j})$ is defined for all $b \preceq a$ and all $j \in \N$,
\item[(c)] $(f_b)$ is not sufficient for $F$ for any $b \prec a$.
\end{itemize}
Similarly, $\Lambda_{[f]_a}$ and $\Lambda_{[f]_{\prec a}}$ are defined by replacing $[f]$ with $[f]_a$ or $[f]_{\prec a}$.
}\end{definition}
Since the specification for a $\Lambda$-functional only specifies the connection between $F$ and $\Lambda(F)$, and does not relate $\Lambda(F)$ and $\Lambda(G)$ for different $F$ and $G$, and since there is at least one $\Lambda$-functional $\Lambda_0$ (see \cite{dagsam}), functionals of the form $\Lambda_{[f]}$ can be extended to total $\Lambda$-functionals. We trivially have the following. 
\begin{lemma} 
Let $[f]$ be as above and also partially computable in ${\bf S}$ . Then every function $g$ computable in $\Lambda_{[f]}$ is also computable in ${\bf S}$.
\end{lemma}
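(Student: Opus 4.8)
The plan is to remove the oracle $\Lambda_{[f]}$ in favour of ${\bf S}$ in two stages: first show that a \emph{single} application $\Lambda_{[f]}(F)$ is computable in ${\bf S}$ whenever its argument $F$ is, and then splice this into an arbitrary S1--S9 computation. So the key sub-claim I would prove is: if $F$ is a partial type-$2$ functional computable in ${\bf S}$ and finitely many parameters $p$, then $\Lambda_{[f]}(F)$ is computable in ${\bf S}$ and $p$, uniformly in an index for $F$. Granting this, the lemma follows from the standard substitution analysis of S1--S9.

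For the sub-claim I would check that every ingredient in the definition of $\Lambda_{[f]}(F)$ is available from ${\bf S}$. The family $[f]$ is computable in ${\bf S}$ by hypothesis, so every $f_{a,i}$ is; the well-ordering $(A,\prec)$, and crucially each proper initial segment $\{b\mid b\prec a\}$, is computable in ${\bf S}$ uniformly in $a$ by Lemma~\ref{lemma.useful}. The sufficiency test in clause~(a) asks whether the open set $\bigcup_{i}[\bar f_{a,i}F(f_{a,i})]$ has measure $1$; with $F(f_{a,i})=\{d\}({\bf S},f_{a,i},p)$ this set is computable in ${\bf S}$ and $p$, hence is a Suslin set by Proposition~\ref{prop.unif}(b), so its measure is computable in ${\bf S}$ from a code by Proposition~\ref{prop.unif}(c), and ``$=1$'' is then a $\Pi^0_1$ condition settled by $\exists^{2}$, which is computable in ${\bf S}$. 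I would \emph{not} try to decide the definedness in clause~(b) separately: the simulation simply runs the ${\bf S}$-computations $\{d\}({\bf S},f_{b,j},p)$ as needed, and precisely when $\Lambda_{[f]}(F)$ is defined, say equal to $(f_a)$, clause~(b) guarantees that all of these converge for $b\preceq a$, so the bottom-up search along $\prec$ encounters no stuck computation before reaching $a$. Thus ``find the $\prec$-least $a$ satisfying (a)--(c)'' is an effective transfinite recursion along $(A,\prec)$ relative to ${\bf S}$.

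It remains to see that this recursion is a genuine ${\bf S}$-computation in the convergent case. When $\Lambda_{[f]}(F)=(f_a)$, the element $a$ has $\prec$-rank strictly below $\omega_1^{\bf S}$ (the order type of $A$), and by the uniform initial-segment computability of Lemma~\ref{lemma.useful} together with the uniform countable rank bound of Proposition~\ref{prop.unif}(a), the recursion up to $a$ halts at a stage of rank $<\omega_1^{\bf S}$; hence it is carried out by a single ${\bf S}$-computation, uniformly in $d$ and $p$. (When no qualifying $a$ exists, both $\Lambda_{[f]}(F)$ and the simulation diverge, which is harmless.) This proves the sub-claim. To finish, I would argue by induction on the ordinal rank of a convergent computation $\{e\}(\Lambda_{[f]},\vec x)=g(\vec x)$: by Kleene's clauses the oracle $\Lambda_{[f]}$ is only applied to functionals presented as subcomputations $F=\lambda h.\{d\}(\Lambda_{[f]},h,\vec x)$, each of which is computable in ${\bf S}$ and $\vec x$ by the induction hypothesis, so the sub-claim converts every such application into an ${\bf S}$-computation, while all remaining S1--S9 steps are handled directly by ${\bf S}$ (which computes $\exists^{2}$ and hence the arithmetical and primitive-recursive apparatus). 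Assembling these replacements yields a computation of $g$ from ${\bf S}$ alone.

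The only genuine obstacle is the ordinal book-keeping. Each application of $\Lambda_{[f]}$ injects a search of length up to $\omega_1^{\bf S}$, and one must ensure that splicing these searches into the outer computation keeps every rank strictly below $\omega_1^{\bf S}$, so that the end product is an honest ${\bf S}$-computation rather than a transfinite iteration beyond the reach of ${\bf S}$. This is exactly what the uniform rank bound of Proposition~\ref{prop.unif}(a) and the uniform initial-segment computability of Lemma~\ref{lemma.useful} supply; with this machinery in place the argument is routine, which is why the statement is recorded as immediate.
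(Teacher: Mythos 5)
Your overall route---first establish that a single application $\Lambda_{[f]}(F)$ is ${\bf S}$-computable, uniformly in an ${\bf S}$-index for $F$, and then absorb $\Lambda_{[f]}$ into an S1--S9 computation by induction on rank---is exactly the argument the paper has in mind when it records this lemma as trivial, and your sub-claim invokes the right ingredients: Lemma~\ref{lemma.useful} for the ordering and its initial segments, Proposition~\ref{prop.unif} (or simply $\exists^2\leq{\bf S}$, since the sets involved are countable unions of basic clopen sets) for the measure computations, and the recursion theorem for the transfinite search. Two harmless slips: ``measure $=1$'' is arithmetical ($\Pi^0_2$, not $\Pi^0_1$) in the sequence of values, but $\exists^2$ settles it regardless; and since $A$ is only semi-computable in ${\bf S}$, the search is best implemented via Gandy selection applied to the ${\bf S}$-semi-computable condition ``(a)$\wedge$(b)$\wedge$(c) holds at $a$'', whose witness is unique, rather than by literally stepping from one element of $A$ to the next.

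The one step you state more strongly than your induction hypothesis warrants is in the splicing stage: the inner functional $F=\lambda h.\{d\}(\Lambda_{[f]},h,\vec x)$ is \emph{not} known to be computable in ${\bf S}$ by the induction hypothesis. That hypothesis applies only to the \emph{convergent} subcomputations, so it yields an ${\bf S}$-computable partial functional $F'$ agreeing with $F$ on the domain of $F$, and $F'$ may converge on additional arguments. Since clause (c) in the definition of $\Lambda_{[f]}$ can in principle hold through divergence (a sequence $(f_b)$ \emph{fails} $F$ when some value is undefined), a functional of this shape could distinguish $F$ from a proper extension, so replacing $F$ by $F'$ needs justification. The repair is already contained in your own sub-claim: your implementation only queries its argument at points $f_{b,j}$ with $b\preceq a$, where $a$ is the stopping point; clause (b) guarantees that $F$ converges at all such points, the induction hypothesis then gives $F(f_{b,j})=F'(f_{b,j})$ there, so the computed measures coincide and the search stops at the same $a$, returning $(f_a)=\Lambda_{[f]}(F)$. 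Equivalently, clause (b) is precisely what makes $\Lambda_{[f]}$ monotone under extension of its argument. Once this point is made explicit, your proof is complete.
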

To obtain our main results, we must construct $[f]$ such that any function $g$ computable in any total extension of $\Lambda_{[f]}$ is still computable in ${\bf S}$. 
By convention, we let $C^0$ consist of a singleton, the empty sequence,  with measure 1.

\medskip

The following two lemmas are closely related to resp.\ \cite{dagsamII}*{Lemmas~3.29 and~3.30}.  Indeed, the proof of Lemma \ref{lemma.ind} proceeds via a line-to-line translation from the $\omega_1^{\rm CK}$-recursion in the proof of \cite{dagsamII}*{Lemma 3.29} to an $\omega_1^{\bf S}$-recursion. 
Since the proof is long and technical with no new additions, we restrict ourselves to an outline of the proof here. 
Whenever the proof makes use of \cite{dagsamII}*{Proposition 3.23}, the proof of Lemma \ref{lemma.ind} makes use of Proposition \ref{prop.unif} instead. 
Since the proof of Lemma~\ref{lemma.sup} is based on details in the proof of Lemma \ref{lemma.ind}, we refer to \cite{dagsamII}*{Lemma 3.30} for the exact argument.

\smallskip

In the formulation of the latter lemma, we have taken the (notational) liberty to ignore other ways of listing the inputs. There is no harm in this since we may always use Kleene's S6 to permute inputs. 
Our motivation is that stating and proving the general result will be much more cumbersome, but all genuine mathematical obstacles are however gone. 
\begin{lemma}\label{lemma.ind}
By ${\bf S}$-recursion on $a \in A$, we can construct $[f] = \{(f_a)\}_{a \in A}$ and sets $X_{a,k} \subseteq C^k$ of measure 1 \(for each $k \in \N$ and $a \in A$\) such that an alleged computation $\{e\}\big(\Lambda_{[f]_a},{\bf S},\vec a  ,\vec h, \vec g\big)$ will terminate whenever the parameters satisfy:
\begin{enumerate}
\item[(a)] $a \in A$ has norm $\alpha = ||a||$, $e$ is a Kleene-index, $\vec a\in \textup{\seq}$, and $\vec g \in X_{a,k}$, 
\item[(b)] $\vec h$ is a sequence from $\{f_{b,i} \mid i \in \N \wedge b \preceq a\}$,  
\item[(c)] there is some extension $[f']$ of ${[f]_a}$ such that $\{e\}(\Lambda_{[f']}, {\bf S},\vec a , \vec h, \vec g)\!\!\downarrow$ with a computation of ordinal rank at most $\alpha$. 
\end{enumerate} 
\end{lemma}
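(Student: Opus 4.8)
The plan is to carry out the construction exactly as in \cite{dagsamII}*{Lemma 3.29}, replacing the $\omega_1^{\rm CK}$-recursion there by a transfinite recursion along the well-ordering $(A,\prec)$ of order type $\omega_1^{\bf S}$ supplied by Lemma \ref{lemma.useful}, and replacing each appeal to \cite{dagsamII}*{Proposition 3.23} by the corresponding clause of Proposition \ref{prop.unif}. Concretely, I would define $(f_a)$ and the sets $X_{a,k}$ by ${\bf S}$-recursion on $a\in A$: at stage $a$ with $\alpha=\|a\|$ I may assume that $[f]_{\prec a}$ and all the sets $X_{b,k}$ for $b\prec a$ have already been produced, and that the conclusion of the lemma holds for every $b\prec a$. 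Since $\{\langle b,c\rangle\mid b\prec c\prec a\}$ is computable in ${\bf S}$ uniformly in $a$, the functional $\Lambda_{[f]_{\prec a}}$ and all data from earlier stages are available ${\bf S}$-computably, so the stage-$a$ construction can itself be organised to be ${\bf S}$-computable.

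The core of the stage is a case analysis on the last clause (S1--S9) of an alleged computation $\{e\}(\Lambda_{[f']},{\bf S},\vec a,\vec h,\vec g)$ of ordinal rank at most $\alpha$. Computations whose rank is strictly below $\alpha$ are handled by the induction hypothesis applied at the relevant $b\prec a$, so the only genuinely new case is an application of the oracle $\Lambda_{[f']}$ to a type-$2$ functional $F=\lambda f.\{e'\}(\Lambda_{[f']},{\bf S},\ldots,f,\ldots)$ arising inside the computation. Here each subcomputation $F(f)$ has rank strictly below $\alpha$, so by the induction hypothesis $F$ is total on the relevant arguments as $\vec g$ ranges over the previously-built measure-$1$ sets. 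I would then view $F$ as a family $F_{\vec g,\vec h}$ parametrised by the non-$\Lambda$ inputs and, after checking that it is a Suslin set computable in ${\bf S}$ by Proposition \ref{prop.unif}(a)--(b), apply Lemma \ref{hovedlemma}: for a measure-$1$ set of $\langle (f),\vec g\rangle$ the sufficiency-or-failure dichotomy holds, i.e.\ either $(f)$ is sufficient for $F_{\vec g,\vec h}$ or it fails it in the controlled way demanded by the definition of $\Lambda_{[f]_a}$.

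With this dichotomy in hand I would fix $(f_a)$ and shrink the sets $X_{a,k}$ simultaneously. Using the basis theorem Proposition \ref{prop.unif}(d) together with the computability of ${\bf m}(A)$ for Suslin $A$ from Proposition \ref{prop.unif}(c), I can choose $(f_a)=\{f_{a,i}\}_{i\in\N}$ computable in ${\bf S}$ so that it is sufficient for every rank-$\alpha$ functional $F$ that is not already served by some $(f_b)$ with $b\prec a$ (securing clause (c) of the definition of $\Lambda_{[f]_a}$), and so that every functional occurring in a lower-rank computation is total on $(f_a)$ (securing clause (b)). There are only countably many indices $e$ and finite input patterns to consider, each contributing one measure-$1$ constraint on $X_{a,k}$; by Proposition \ref{measure}(a) their intersection is again of measure $1$, and I would take $X_{a,k}$ to be this intersection, restricted as permitted by the remark after Lemma \ref{hovedlemma} to the fixed measure-$1$ set of admissible sequences. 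The \emph{failing} alternative in Lemma \ref{hovedlemma} is exactly what keeps the non-sufficient cases under control, preventing spurious terminations of $\Lambda_{[f]_a}$ on functionals that ought to be passed on to a later stage.

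The main obstacle, and the reason the full argument is long, is the self-referential use of $(f_a)$: the functional $\Lambda_{[f]_a}$ whose termination we are trying to force itself queries $(f_a)$, so the measure-$1$ constraints used to select $(f_a)$ and the termination behaviour they guarantee must be disentangled coherently, all while keeping the selection uniformly computable in ${\bf S}$ and verifying the non-repeating and positive-measure hypotheses of Lemma \ref{hovedlemma} at each invocation. Since every such step is a line-by-line transcription of the $\omega_1^{\rm CK}$-recursion of \cite{dagsamII}*{Lemma 3.29} with $\omega_1^{\bf S}$, ${\bf S}$ and Proposition \ref{prop.unif} in place of $\omega_1^{\rm CK}$, $\exists^2$ and \cite{dagsamII}*{Proposition 3.23}, and introduces no new mathematical content, I would present only this outline and refer to \cite{dagsamII} for the remaining bookkeeping.
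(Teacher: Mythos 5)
Your proposal is correct and follows essentially the same route as the paper's own proof: an ${\bf S}$-recursion along $(A,\prec)$ obtained by transliterating the $\omega_1^{\rm CK}$-recursion of \cite{dagsamII}*{Lemma 3.29}, with Proposition \ref{prop.unif} substituted for \cite{dagsamII}*{Proposition 3.23}, Lemma \ref{hovedlemma} handling the S8-applications of the $\Lambda$-oracle, and the sets $X_{a,k}$ obtained as countable intersections of measure-one constraint sets via Proposition \ref{measure}. The only cosmetic differences are that the paper invokes Gandy selection (rather than your appeal to the basis theorem in Proposition \ref{prop.unif}) to make the choice of $(f_a)$ uniformly ${\bf S}$-computable, and it explicitly names the recursion theorem for ${\bf S}$-computability as the device resolving the self-reference that you correctly identify as the main obstacle.
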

\begin{proof}(Outline)
\begin{itemize}
\item Given $a \in A$, we assume that $[f]_{\prec a}$ is constructed, that all sets $X_{b,k}$ are constructed for $b \prec a$, that the induction hypothesis holds and that what is constructed so far, is computable in ${\bf S}$.
\item The main step is, for each $k$, to construct  a $C$-set $Z_{a,k} \subseteq C^{\omega} \times C^k$ of measure 1 such that for all $\langle(f),\vec g \rangle \in Z_k$, the property stated in the Lemma will hold for $\vec g$ and $a$, except the requirement that $(f)$ is computable in ${\bf S}$, if we extend $[f]_{\prec a}$ with $(f)$.
\item We can then use Proposition \ref{measure}, Proposition \ref{prop.unif}, and Gandy selection to find an $(f_a)$ computable in ${\bf S}$ such that the following set has measure one:
\[
X_{a,k} = \{\vec g \in C^k \mid \langle(f_a),\vec g\rangle \in Z_{a,k}\}.
\]
\item The set $Z_{a,k}$ is the intersection of countably many sets needed to handle each case given by the index $e$ and by how the sequence $\vec h$ is selected from $[f]_{\prec a}$ and $(f_a)$. We use Lemma \ref{hovedlemma} to handle the cases corresponding to Kleene's S8, ensuring that we can use $(f_a)$ as the value of $\Lambda_{\bf S}(F)$ for all $F$ semi-computable in ${\bf S}$ and $\Lambda_{\bf S}$ and total on a set of measure 1 exactly at stage $||a||$.
\item The whole construction is tied together using the recursion theorem for computing relative to ${\bf S}$
\end{itemize}
This finishes the proof of the lemma.
\end{proof}
 For each $k\in \N$, define $X_k = \bigcap_{a \in A} X_{a,k}$. These sets are  $C$-sets, but not with a code computable in ${\bf S}$. They are complements of sets semi-computable in ${\bf S}$.
 For a proof of the following lemma, see (the proof of) \cite{Suslin}*{Lemma 3.27}.
 \begin{lemma}\label{lemma.sup}
 For each $k$ and $\vec g \in X_k$ we have ${\bf m}(\{g \mid g,\vec g \in X_{k+1}\}) = 1$.
 \end{lemma}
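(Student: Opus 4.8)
The plan is to exploit that $A$ is a \emph{countable} subset of $\N$, so that the intersection defining $X_{k+1}$ is a genuinely countable one, and then to feed the termination data built in Lemma \ref{lemma.ind} into the Fubini-type statement of Proposition \ref{measure}. First I would rewrite the fibre as
\[
\{g\in C \mid g,\vec g \in X_{k+1}\} = \bigcap_{a \in A}\{g \in C\mid g,\vec g \in X_{a,k+1}\},
\]
using $X_{k+1} = \bigcap_{a\in A}X_{a,k+1}$. Since $A \subseteq \N$ is countable, Proposition \ref{measure}(a) reduces the claim to showing, for each fixed $a \in A$ and each $\vec g \in X_k$, that the single fibre $\{g \mid g,\vec g \in X_{a,k+1}\}$ has measure $1$.

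Now fix $a$. Since $X_{a,k+1}$ has measure $1$ in $C^{k+1}$ by the construction in Lemma \ref{lemma.ind}, applying Proposition \ref{measure}(b) — with $\vec g$ as base point and the fresh coordinate $g$ as fibre — shows that for almost every $\vec g \in C^k$ the fibre $\{g \mid g,\vec g \in X_{a,k+1}\}$ has measure $1$. The real work is to upgrade `almost every $\vec g$' to `every $\vec g \in X_k$'. Here I would return to the way $X_{a,k+1}$ arises from $Z_{a,k+1}$ in the proof of Lemma \ref{lemma.ind}: recall $X_{a,k+1} = \{\langle g,\vec g\rangle \mid \langle (f_a),g,\vec g\rangle \in Z_{a,k+1}\}$, and $Z_{a,k+1}\subseteq C^\omega \times C^{k+1}$ is a countable intersection of sets, one per case $\langle e,\vec h\rangle$, each produced by Lemma \ref{hovedlemma}. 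The constraints that involve only $\vec g$ are already secured by $\vec g \in X_k = \bigcap_b X_{b,k}$, while the genuinely new constraints bear on the fresh coordinate $g$ and, by the `sufficient-or-fails' dichotomy of Lemma \ref{hovedlemma}(b) (together with the remark that $(f)$ may be restricted to a measure-$1$ subset of $C$), hold for a set of $g$ of measure $1$. A concluding application of Proposition \ref{measure}(a) across these countably many fibre conditions gives that $\{g \mid g,\vec g \in X_{a,k+1}\}$ has measure $1$ for every $\vec g \in X_k$, and then the first paragraph finishes the proof.

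The main obstacle is precisely this last upgrade: Proposition \ref{measure}(b) only delivers full-measure fibres for almost all base points, whereas the lemma asserts it for \emph{every} $\vec g \in X_k$. Overcoming it requires tracking, through the $\omega_1^{\bf S}$-recursion of Lemma \ref{lemma.ind}, that membership of $\vec g$ in $X_k$ already discharges every $\vec g$-only constraint in $Z_{a,k+1}$, so that only the constraints on the new coordinate remain, and these are handled uniformly by Lemma \ref{hovedlemma}. Since this is a line-by-line transcription of the $\omega_1^{\rm CK}$-argument underlying \cite{dagsamII}*{Lemma 3.30}, I would carry out the bookkeeping exactly as there, replacing each appeal to \cite{dagsamII}*{Proposition 3.23} by Proposition \ref{prop.unif}, and otherwise change nothing.
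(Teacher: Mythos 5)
Your proposal is correct and takes essentially the same route as the paper: the paper offers no self-contained argument for this lemma, but states that it rests on the details of the recursion in Lemma \ref{lemma.ind} and refers to \cite{dagsamII}*{Lemma~3.30} for the exact bookkeeping, which is exactly how you resolve the crucial upgrade from ``almost all $\vec g$'' to ``every $\vec g \in X_k$''. Your preliminary reductions (the countable intersection over $a \in A$ handled by Proposition \ref{measure}(a), and the observation that Proposition \ref{measure}(b) alone cannot give the fibre property at \emph{every} point of $X_k$, so that the coherence must be built into the construction itself) correctly locate where the real content lies.
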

 Let $\Lambda_0$ be the $\Lambda$-functional constructed in \cite{dagsam}, and let $[f]$ be as constructed in the proof of Lemma \ref{lemma.ind}. We define $\Lambda_{\bf S}$ as follows:
 \be\label{lambda1337}
 \Lambda_{\bf S}(F) = \left \{ \begin{array}{llc} \Lambda_{[f]}(F)& {\rm if\;defined}\\\Lambda_0(F)& {\rm otherwise}\end{array}\right.
 ,
 \ee
 and prove our main theorem as follows. 
 \begin{theorem}\label{theorem.lambda}
 If $f:\N\di \N$ is computable in $\Lambda_{\bf S}+\bf S$, then it is computable in ${\bf S}$.
 \end{theorem}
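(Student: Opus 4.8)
The plan is to show that the $\Lambda_0$-fallback clause of \eqref{lambda1337} is never actually reached in a convergent computation, so that $\Lambda_{\bf S}$ may be replaced by $\Lambda_{[f]}$ throughout, after which the earlier (trivial) lemma — every function computable in $\Lambda_{[f]}$ is computable in ${\bf S}$, since $[f]$ is partially computable in ${\bf S}$ — finishes the argument. Fix a Kleene-index $e$ with $f(n)=\{e\}(\Lambda_{\bf S},{\bf S},n)$ for all $n$. By definition $\Lambda_{\bf S}(F)=\Lambda_{[f]}(F)$ whenever the right-hand side is defined, and $\Lambda_{[f]_a}$, $\Lambda_{[f]}$ and $\Lambda_{\bf S}$ all agree on any $F$ whose least sufficient stage lies at or below $a$. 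Hence the entire difficulty is concentrated in showing that $\Lambda_{[f]}(F)$ is defined for every functional $F$ that is queried while computing $f(n)$.

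I would first record the monotonicity of S1-S9 computations: a convergent computation relative to a partial oracle keeps its value under any consistent extension of that oracle. Since $\Lambda_{\bf S}$ is a total extension of $\Lambda_{[f]}$ and of each $\Lambda_{[f]_a}$, this has two consequences. On one hand, once $\{e\}(\Lambda_{[f]_a},{\bf S},n)\!\downarrow$ is established its value is forced to equal $f(n)$; on the other hand, the whole problem reduces to producing convergence on the $\Lambda_{[f]}$-side, i.e.\ to checking that the computation never needs the $\Lambda_0$ branch.

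The core step is an induction on the ordinal rank of the convergent computation $\{e\}(\Lambda_{\bf S},{\bf S},n)$, proving that it coincides node-for-node with $\{e\}(\Lambda_{[f]},{\bf S},n)$. At every node other than an application of the $\Lambda$-oracle this is immediate from the induction hypothesis; at a node where the oracle is applied to some $F$, the induction hypothesis guarantees that $F$ is exactly the functional arising on the $\Lambda_{[f]}$-side, built from ${\bf S}$ and the already-matched subcomputations. Here I would invoke Lemma \ref{lemma.ind}: because its construction is carried out by ${\bf S}$-recursion on $A$ and anticipates every index $e$ — the set $Z_{a,k}$ being the intersection over all cases determined by $e$ and by the selection of $\vec h$ from $[f]_{\prec a}$ and $(f_a)$ — the family $[f]$ is arranged so that for precisely these $F$ some $(f_a)$ becomes sufficient at a stage $a$ with $\|a\|<\omega_1^{\bf S}$, the S8 case being handled through Lemma \ref{hovedlemma} and the failing mechanism ruling out spurious termination. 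Thus $\Lambda_{[f]}(F)$ is defined, the fallback is not reached, and the two computations agree. The coherence needed to run this across all ranks at once — that the measure-one sets $X_{a,k}$ intersect to sets $X_k$ on which the required slices stay of measure one — is supplied by Lemma \ref{lemma.sup}.

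Having shown $\{e\}(\Lambda_{[f]},{\bf S},n)\!\downarrow=f(n)$ for every $n$, the function $f$ is computable in $\Lambda_{[f]}$, so the preceding lemma yields that $f$ is computable in ${\bf S}$. Equivalently, one may finish directly by applying Lemma \ref{lemma.ind} with $k=0$, empty $\vec h$, and $\vec g=\langle\rangle\in X_0=C^0$: clause (c) holds with $[f']=[f]$ for the least suitable $a=a(n)$, whence $\{e\}(\Lambda_{[f]_a},{\bf S},n)\!\downarrow=f(n)$, and since $\Lambda_{[f]_a}$ is computable in ${\bf S}$ uniformly in $a$ by Lemma \ref{lemma.useful} and Proposition \ref{prop.unif}, while $a(n)$ is locatable via the ${\bf S}$-semicomputable ordering $\prec$, the value $f(n)$ is computable in ${\bf S}$. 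I expect the main obstacle to be the rank-induction of the third paragraph, specifically setting it up without circularity — the queried $F$ is itself $\Lambda_{\bf S}$-computable, so ``the same $F$ arises on the $\Lambda_{[f]}$-side'' must be justified simultaneously with the agreement of the two computations — and verifying that every such $F$ genuinely falls under a case covered by some $Z_{a,k}$ at a stage below $\omega_1^{\bf S}$; this is exactly where the recursion theorem for computing relative to ${\bf S}$ and the failing mechanism carry the essential weight.
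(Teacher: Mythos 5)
Your overall strategy---a rank induction matching the $\Lambda_{\bf S}$-computation against the $\Lambda_{[f]}$-side, with Lemmas \ref{lemma.ind}, \ref{lemma.sup} and \ref{hovedlemma} carrying the weight at oracle nodes, and the observation that $[f]$ is partially ${\bf S}$-computable finishing the argument---is indeed the paper's strategy. But the induction claim you formulate is too strong, and this is a genuine gap rather than a checkable detail. You claim node-for-node coincidence of $\{e\}(\Lambda_{\bf S},{\bf S},n)$ with $\{e\}(\Lambda_{[f]},{\bf S},n)$, equivalently that the $\Lambda_0$-fallback in \eqref{lambda1337} is never reached in a convergent computation. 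At an application of the $\Lambda$-oracle (Kleene's S8) the immediate subcomputations are $\{e_1\}(\Lambda_{\bf S},{\bf S},g,\ldots)$ for \emph{every} $g\in C$, so your induction hypothesis would have to be maintained at arbitrary real parameters. The construction in Lemma \ref{lemma.ind}, however, only guarantees termination on the measure-one sets $X_{a,k}$: for $g$ in the complementary measure-zero set (for instance $g$ with $\omega_1^{{\bf S},g}>\omega_1^{\bf S}$), nothing forces any $(f_a)$ to be sufficient for the functionals queried inside the subcomputation at $g$, so on the $\Lambda_{[f]}$-side that subcomputation may diverge while on the $\Lambda_{\bf S}$-side the fallback fires and the computation happily converges. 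Hence ``the same $F$ arises on the $\Lambda_{[f]}$-side'' is not recoverable from the induction hypothesis, and the induction does not close. This is exactly the obstacle you flag in your final paragraph, but it cannot be resolved within your formulation: it forces a weakening of the claim.

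The paper's proof is built around this weakening. Its claim \eqref{calem} carries the real parameters explicitly: $\vec h$ ranges over sequences from $[f]$, $\vec g$ ranges over $\bigcap_{a\in A}X_{a,k}$ with the additional condition $\omega_1^{{\bf S},\vec g}=\omega_1^{\bf S}$ (still measure one by Proposition \ref{propmeas}), and the conclusion is only that the \emph{value} is reproduced at some stage, $(\exists a\in A)\,(\{e\}(\Lambda_{[f]_a},{\bf S},\vec a,\vec h,\vec g)=b)$---not that the computation trees agree. At a $\Lambda_{\bf S}$-application node one then needs only measure-one-many subcomputations to be matched (Lemma \ref{lemma.sup} keeps the relevant slices of measure one); a single stage $a$ is extracted from the increasing sequence of measures, computable in $\vec g$ and ${\bf S}$, using $\omega_1^{{\bf S},\vec g}=\omega_1^{\bf S}$; and the two oracle values agree because sufficiency of some $(f_b)$ with $b\preceq a$ is decided by the values of the queried functional at the points $f_{c,j}$---which are covered by the $\vec h$-part of the induction hypothesis---rather than by its full graph, which differs between the two sides. (Note also that the ${\bf S}$-application case \eqref{cake1} is not ``immediate'': it needs the same admissibility of $\omega_1^{\bf S}$ relative to $\vec g$ to bound the stages $a_s$ uniformly over all $s\in\textup{\seq}$, precisely because the matching is stated stage-by-stage.) So the correct proof keeps your skeleton but replaces ``identical computations, fallback never used'' by ``value reproduced at some stage, for almost all parameters'', and that measure-theoretic relativisation of the induction hypothesis is what makes the argument go through.
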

 \begin{proof}
 We shall prove the stronger claim \eqref{calem} below by induction on the length of the computation.  
 We need some notation as follows.
 Let $e$ be a Kleene index, let $\vec a$ be a sequence from $\N$, let $\vec h$ be a sequence from $[f]$, and let $\vec g$ of length $k$ be a sequence from $\bigcap_{a \in A}X_{a,k}$ such that $\omega_1^{\bf S} = \omega_1^{{\bf S} ,\vec g}$. 
 By Proposition \ref{propmeas}, the final restriction does not alter the measure of the set.
Now consider the following claim:
\be\label{calem}
\{e\}(\Lambda_{\bf S} , {\bf S} , \vec a , \vec h , \vec g) = b\di (\exists a\in A) (\{e\}(\Lambda_{[f]_a},{\bf S},\vec a , \vec h , \vec g) = b)
\ee 
The theorem follows from the claim \eqref{calem} and the total instances $\lambda c. \{e\}(\Lambda_{\bf S},c)$. 
 
\smallskip 
 
 We now prove the claim \eqref{calem} by induction on the ordinal rank of the computation $\{e\}(\Lambda_{\bf S} , {\bf S} , \vec a , \vec h , \vec g) = b.$ 
 The proof is split into cases according to which Kleene-scheme $e$ represents, and all cases except those for application of ${\bf S}$ or $\Lambda_{\bf S}$ are trivial. 
 Thus, we (only) consider the two cases \eqref{cake1} and \eqref{cake2}.  First, we consider
 \be\label{cake1}
 \{e\}(\Lambda_{\bf S} , {\bf S}, \vec a , \vec h , \vec g) = {\bf S}(\lambda s.\{e_1\}(\Lambda_{\bf S} , {\bf S} , s,\vec a,\vec h, \vec g)),
\ee 
which yields the following by the induction hypothesis:
 \[
( \forall s \in {\rm \seq}) (\exists a \in A) [\{e_1\}(\Lambda_{[f]_a},s,\vec a , \vec h , \vec g)\!\!\downarrow].
 \]
 Since $\omega_1^{\bf S}$ is $\vec g$-admissible, there is a bound on how far out in $A$ we need to go, i.e.\ $(\exists a \in A) (\forall s \in {\rm \seq} )[\{e_1\}(\Lambda_{[f]_a},s,\vec a , \vec h , \vec g)\!\!\downarrow]$, 
 and $\{e\}\big(\Lambda_{[f_a]} , {\bf S}, \vec a , \vec h , \vec g\big)\!\!\downarrow$ follows.
 
\smallskip
\noindent
For the second case, consider 
\be\label{cake2}
\{e\}(\Lambda_{\bf S},{\bf S} , \vec a,\vec h , \vec g) = \Lambda_{\bf S}(\lambda g. \{e_1\}(\Lambda_{\bf S} , {\bf S},\vec a , \vec h , g , \vec g)).
\ee
By Lemma \ref{lemma.sup} and the induction hypothesis, for almost all $g$ there is an $a_g \in A$ such that $\{e_1\}\big(\Lambda_{[f]_{a_g}},{\bf S},\vec a , \vec h , g , \vec g\big)\!\! \downarrow$.
Now consider the sequence 
\[
a \mapsto {\bf m}(\{g \mid \{e_1\}_{||a||}(\Lambda_{[f]_a} , {\bf S} , \vec a , \vec h ,g, \vec g)\!\!\downarrow\}),
\]
where $\{e_1\}_{||a||}$ means that we only consider computations with ordinal rank below $||a||$.  This sequence is increasing, computable in $\vec g$ and ${\bf S}$, and has limit 1. 
By the assumption on $\vec g$, we have $\omega_1^{\bf S} = \omega_1^{{\bf S},\vec g}$ and this ordinal is $\vec{g}$-admissible.  As a result, there is $a \in A$ such that 
\[
{\bf m}(\{g \mid \{e_1\}_{||a||}(\Lambda_{[f]_a }, {\bf S} , \vec a , \vec h ,g, \vec g)\!\!\downarrow\}) = 1. 
\] 
Moreover, we may assume that $\vec h$ is in $[f]_a$.  Hence, our construction guarantees that $(f_a)$ is sufficient for $\lambda g. \{e_1\}(\Lambda_{\bf S} , {\bf S},\vec a , \vec h , g , \vec g)$, unless some $(f_b)$ already does the job for $b \prec a$.
We may conclude that 
\[ 
\{e\}(\Lambda_{\bf S},{\bf S} , \vec a , \vec h , \vec g) = (f_b) = \{e\}( \Lambda_{[f]_a},{\bf S} , \vec a , \vec h , \vec g)
\] 
for some $b \preceq a$. This ends the induction step, and we are done.  
\end{proof}
As an application, we now use the functional $\Lambda_{\bf S}$ to construct a model for $\FIVE^{\omega}+\WHBU$ in which $\neg\HBU$ holds. We make use of the well-known fact that all terms in G\"odel's $T$ have set-theoretical interpretations as elements of the maximal type-structure of total functions, and that this is also the case when we use extra constants for total functionals. Recall that $\mu^{2}$ is Feferman's search operator from Section \ref{prelim2}.  We need the following definition. 
\begin{definition}~
\begin{enumerate}
\renewcommand{\theenumi}{\alph{enumi}}
\item Let $\SUS$ be the set of functions computable in $\bf S$. 
\item For any type $\sigma$, let ${\rm \SUS}_\sigma$ be the set of functions of type $\sigma$ in the maximal type-structure definable via a term in G\"odel's T using constants for $\mu^{2}$, $\SS$, $\Lambda_{\SS}$ and elements in $\SUS$.
\item We define a \emph{partial equivalence relation} `$\sim_\sigma$' on ${\rm \SUS}_\sigma$ by recursion on $\sigma$:
\begin{itemize}
\item[(i)] The relation $\sim_{\N}$ is the identity relation on $\N$
\item[(ii)] If $\sigma = \tau\rightarrow \delta$ and $F_1,F_2\in {\rm \SUS}_\sigma$, we let $F_1 \sim_\sigma F_2$ if $\phi_1 \sim_{\tau}\phi_2\di F_1\phi_1 \sim_{\delta} F_2\phi_2$ for all $\phi_{1}, \phi_{2}\in{\rm \SUS}_{\tau}$.
\end{itemize}
\end{enumerate}
\end{definition}
By the properties of $\Lambda_{\bf S}$, we observe that ${\rm \SUS}_{0 \rightarrow 0} = {\rm \SUS}$. We also observe that $\sim_{0 \rightarrow 0}$ is the identity relation on $\SUS$. It is then easy to see that all $\sim_\sigma$ are partial equivalence relations, i.e.\ transitive and symmetric, and thus equivalence relations on the set of self-equivalent functionals $F$. We may then form the \emph{Hereditarily Extentional Collapse} (\HEC), intimately related to the Mostovski collapse from set theory. 
For numerous applications of the $\HEC$, we refer to \cite{longmann}.

\smallskip
We state the following well-known fact without proof.
\begin{lemma}
Seen as a model for $\Z_{2}$, $\SUS$ satisfies $\Pi^1_1$-comprehension, and all $\Pi^1_1$-formulas with parameters from $\SUS$ are absolute for $({\rm \SUS}, \N^\N)$.
\end{lemma}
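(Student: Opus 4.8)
The plan is to derive both assertions from a single ingredient, namely $\Sigma^1_1$-absoluteness, together with the fact that ${\bf S}$ \emph{decides} $\Sigma^1_1$-questions uniformly. Since a $\Pi^1_1$-formula is the negation of a $\Sigma^1_1$-formula, and since absoluteness is a biconditional that is preserved under negation, it suffices to prove the following for arithmetical $\theta$ and a parameter $h\in\SUS$ (finitely many parameters being coded into one): $(\exists g\in\SUS)\theta(g,h)$ holds if and only if $(\exists g\in\N^\N)\theta(g,h)$ holds. First I would record that, by the defining clause $(\SS^{2})$, the functional ${\bf S}$ answers any ill-foundedness question about an ${\bf S}$-computable tree, and hence, after passing to Kleene normal form, decides any $\Sigma^1_1$-predicate with function parameters from $\SUS$, uniformly and by an index depending only on the arithmetical matrix.

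For the absoluteness itself, the forward implication is immediate: a witness $g\in\SUS$ already lies in $\N^\N$, and $\theta$ is arithmetical, so its truth depends only on the actual functions $g,h$ and is the same whether the (number) quantifiers are evaluated in $\SUS$ or in $\N^\N$. The substantial direction is a basis theorem: if a real witness exists, one already exists in $\SUS$. Here I would pass to the normal form $(\exists g)(\forall n)R(\overline{g}n,\overline{h}n)$ with $R$ primitive recursive, form the $h$-computable tree $T=\{\sigma:(\forall m\le|\sigma|)R(\overline{\sigma}m,\overline{h}m)\}$, and note that $(\exists g)\theta(g,h)$ holds exactly when $T$ is ill-founded. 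Assuming $T$ ill-founded, I would construct its leftmost branch by recursion: having built $\sigma=\overline{g}n$ with $T_{\sigma}$ ill-founded, I query ${\bf S}$ for the least $i$ with $T_{\sigma *\langle i\rangle}$ ill-founded and set $g(n):=i$. Each query is a $\Sigma^1_1$-question about an ${\bf S}$-computable tree, hence is answered by ${\bf S}$; the resulting $g$ is computable in ${\bf S}$ together with $h\in\SUS$, so $g\in\SUS$, and it is an infinite branch of $T$, which decodes to a witness of $\theta$. This is exactly the selection principle underlying the uniformity statements of Proposition~\ref{prop.unif}, specialised to the present purpose.

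Granting absoluteness, $\Pi^1_1$-comprehension in the model is routine. Given a $\Pi^1_1$-formula $\varphi(n,h)$ with $h\in\SUS$, absoluteness yields $\varphi^{\SUS}(n,h)\asa\varphi^{\N^\N}(n,h)$ for every $n$, so the comprehension set defined \emph{inside} the model coincides with the genuine $\Pi^1_1$-set $\{n:\varphi^{\N^\N}(n,h)\}$. The characteristic function of the latter is computable in ${\bf S}$ and $h$ by a single index uniform in $n$, hence lies in $\SUS$; thus the required set exists in the model, and $\SUS$ satisfies $\Pi^1_1$-comprehension.

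The main obstacle is the basis theorem in the hard direction of absoluteness: one must check that the leftmost-branch selection is genuinely an ${\bf S}$-recursion (each ill-foundedness query being $\Sigma^1_1$ in an ${\bf S}$-computable, parameter-uniform tree), and that the branch it produces decodes to an honest witness of the original formula. Everything else, namely the trivial forward direction, the conversion to normal form, and the uniform decision of $\Pi^1_1$-predicates by ${\bf S}$, is bookkeeping, which is presumably why the statement is quoted as well known.
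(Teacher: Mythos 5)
Your proof is correct, but there is nothing in the paper to compare it against: the paper explicitly states this lemma as a ``well-known fact without proof.'' What you have reconstructed is the standard argument that $\SUS$ (the $1$-section of the Suslin functional) is a $\beta$-model of $\Pi^1_1$-comprehension. Your two key steps are exactly the classical ingredients: (i) $\SS$ decides $\Sigma^1_1$ (hence $\Pi^1_1$) predicates with function parameters uniformly, after passing to Kleene normal form; and (ii) the relativised Kleene basis theorem, realised by computing the leftmost branch of the ill-founded tree $T$, where each query ``is $T_{\sigma*\langle i\rangle}$ ill-founded?'' is a $\Sigma^1_1(h)$ question answered by $\SS$, the least such $i$ is found by $\mu$ (available since $\exists^2$, and hence Feferman's $\mu$, is computable in $\SS$), and the resulting branch lies in $\SUS$ by transitivity of Kleene computability ($h$ computable in $\SS$ and $g$ computable in $\SS,h$ give $g$ computable in $\SS$). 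The remaining points are indeed bookkeeping, as you say: the normal-form equivalences hold both in $\N^\N$ and inside $\SUS$ because the latter is an $\omega$-model closed under the relevant recursive operations, and the comprehension set $\{n : \varphi(n,h)\}$ is placed in the model by the uniform decision procedure once absoluteness identifies the model-internal and genuine $\Pi^1_1$ sets. This is presumably the argument the authors had in mind when calling the fact well known, so your proposal fills the gap the paper leaves open rather than diverging from it.
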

Our next lemma will be important for showing that applying the $\HEC$ indeed produces a model of $\RCAo$.  
\begin{lemma} 
If $F \in {\rm \SUS}_\sigma$, then $F \sim_\sigma F$.
\end{lemma}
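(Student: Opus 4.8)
The plan is to recognise this as the \emph{basic lemma} (the fundamental lemma of logical relations) for the family $(\sim_\sigma)$, and to prove it by induction on the structure of the G\"odel-$T$ term, built from the constants $\mu^2,\SS,\Lambda_{\SS}$ and constants for elements of $\SUS$, that denotes $F$. Since self-equivalence at higher types quantifies over arguments, a bare induction on $F$ will not close, so I would first strengthen the statement to open terms: for every term $t$ of type $\sigma$ with free variables $x_1^{\tau_1},\dots,x_n^{\tau_n}$ and any two assignments $\rho,\rho'$ with $\rho(x_i)\sim_{\tau_i}\rho'(x_i)$ for all $i$, one has $\lb t\rb\rho\sim_\sigma \lb t\rb\rho'$. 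Taking $t$ closed and $\rho=\rho'$ then yields $F\sim_\sigma F$. In this induction the variable case is immediate, and the application case $t\equiv t_1 t_2$ is exactly the defining clause of $\sim$ at an arrow type; hence everything reduces to showing that each \emph{constant} is related to itself.

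For the constants the work is concentrated in a single place. Self-equivalence is in fact trivial up to type $2$: since $\sim_0$ is the identity, $\sim_1$ is extensional equality on $\SUS_1$, so the honest Baire-space functionals $\mu^2$ and $\SS$ satisfy $\mu^2\sim_2\mu^2$ and $\SS\sim_2\SS$ automatically, as $\sim_1$-related inputs are literally equal and thus produce equal outputs; likewise each $g\in\SUS\subseteq\SUS_1$ is self-equivalent by reflexivity of extensional equality. The first genuinely nontrivial case is the type-$3$ constant $\Lambda_{\SS}$, where I must show $\Lambda_{\SS}(F_1)=\Lambda_{\SS}(F_2)$ whenever $F_1\sim_2 F_2$. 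The subtlety is that $F_1\sim_2 F_2$ only forces agreement of $F_1$ and $F_2$ on the \emph{definable} points $\SUS_1$, not on all of $C$, so I must exploit the construction. The key observation is that the conditions (a)--(c) defining $\Lambda_{[f]}(F)$ in \eqref{lambda1337} refer to $F$ only through its values at the points $f_{a,i}$, which are computable in $\SS$ and hence lie in $\SUS_1$; consequently both the definedness and the value of $\Lambda_{[f]}(F)$ depend only on the restriction $F\restriction\SUS_1$, so agreement of $F_1,F_2$ on $\SUS_1$ forces $\Lambda_{[f]}(F_1)=\Lambda_{[f]}(F_2)$ together with identical definedness.

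What remains is to rule out the fall-through to $\Lambda_0$ in \eqref{lambda1337}, for which extensionality modulo $\sim_2$ is not available. This is where Lemma~\ref{lemma.ind} does the real work: every total $F\in\SUS_2$ is computable in $\SS$ and $\Lambda_{\SS}$, so the construction guarantees that some $(f_a)$ is sufficient for $F$ at the appropriate stage $\|a\|$, i.e.\ $\Lambda_{[f]}(F)$ is \emph{defined}; hence the $\Lambda_0$ clause is never invoked on elements of $\SUS_2$, and the previous paragraph yields $\Lambda_{\SS}\sim\Lambda_{\SS}$. The combinators $K$ and $S$ and the recursors $\mathbf{R}_\rho$ are then self-equivalent by the routine logical-relations computations, the recursor by an inner induction on its numerical argument (a genuinely equal $\sim_0$-value), and closing the outer induction proves the lemma. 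The main obstacle is precisely the $\Lambda_{\SS}$ case: one must simultaneously check that $\Lambda_{[f]}$ reads its argument only at $\SS$-computable points, so that $\sim_2$-agreement suffices, and, via Lemma~\ref{lemma.ind}, that $\Lambda_{[f]}(F)$ is defined on all total $F\in\SUS_2$, so that the non-extensional $\Lambda_0$ branch is irrelevant; everything else is the standard bookkeeping of the basic lemma.
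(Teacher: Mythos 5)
Your proposal is correct and takes essentially the same route as the paper: induction on the defining term (stated for open terms), with every case routine except the constant $\Lambda_{\bf S}$, which you handle exactly as the paper does, via the two facts that $\Lambda_{[f]}$ consults its argument only at the ${\bf S}$-computable points $f_{a,i}$, and that Lemma \ref{lemma.ind} guarantees definedness of $\Lambda_{[f]}$ on every total functional computable in ${\bf S}$ and $\Lambda_{\bf S}$, so the $\Lambda_0$ branch is never invoked. The only difference is expository: you spell out the justification of the paper's chain of equalities \eqref{Form}, which the paper leaves implicit in its appeal to the construction.
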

\begin{proof} The proof is by induction on the term defining $F$. More precisely, we have to prove that if $t$ is a term of type $\sigma$, with free variables among $x_1^{\tau_1}, \ldots , \ldots , x_n^{\tau_n}$, then the interpretation $[[t]]$, seen as an element of $\tau_1 \rightarrow \cdots \rightarrow \tau_n \rightarrow \sigma$,  will be self equivalent. Due to the possible use of $\lambda$-closure, $[[t]]$ is in our $\SUS$-hierarchy.

\smallskip

The induction base deals with the variables and constants, including all the recursor constants ${\rm \mathbf{R}}_\sigma$. Of these, it is only the constant for $\Lambda_{\bf S}$ that is nontrivial (and new in this case). The induction steps consist of terms formed by $\lambda$-abstraction and by application,  and are trivial (and well-known).
So, let $F$ and $G$ be in ${\rm \SUS}_{(0 \rightarrow 0) \rightarrow 0}$ be $\sim$-equivalent, meaning, in this case, that they are equal on ${\rm \SUS}$. Since they are $T$-definable from $\Lambda _{\bf S}$, ${\bf S}$, $\mu^{2}$ and elements from ${\rm \SUS}$, they are in particular Kleene computable from $\Lambda_{\bf S}$ and ${\bf S}$. Hence, we have 
\be\label{Form}
\Lambda_{\bf S}(F) = \Lambda_{[f]}(F) = \Lambda_{[f]}(G) = \Lambda_{\bf S}(G), 
\ee
where $[f]$ and $\Lambda_{[f]}$ are as constructed in the proof of Lemma \ref{lemma.ind}. 
In light of \eqref{Form}, we may conclude $F \sim_{( 0\rightarrow 0) \rightarrow 0} G$ and the proof is finished.
\end{proof}
Finally, the following theorem readily follows.
\begin{theorem}\label{Frink}
The system $\FIVE^{\omega}+\WHBU$ cannot prove $\HBU$.
\end{theorem}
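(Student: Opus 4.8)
The plan is to build, inside $\ZFC$, a model $\M$ of $\FIVE^{\omega}+\WHBU$ in which $\HBU$ fails, using the Hereditarily Extensional Collapse of the type structure $\SUS_{\sigma}$ together with the functional $\Lambda_{\bf S}$ constructed in this section; the engine driving everything is Theorem~\ref{theorem.lambda}, namely that every $f:\N\di\N$ computable in $\Lambda_{\bf S}+{\bf S}$ is already computable in ${\bf S}$. Let $\M$ be the $\HEC$ obtained from the partial equivalence relations $\sim_{\sigma}$; the preceding self-equivalence lemma is exactly what guarantees that $\M$ is a genuine model of $\RCAo$. The two structural facts I would record first are: (i) by Theorem~\ref{theorem.lambda}, every object of $\M$ of type $\leq 1$ is computable in ${\bf S}$, since it is the value of a closed G\"odel-$T$ term over the constants $\mu^{2},\SS,\Lambda_{\SS}$ and elements of $\SUS$, hence Kleene computable in $\Lambda_{\bf S}+{\bf S}$; and (ii) by Proposition~\ref{prop.unif}, every subset of $C$ named by a type-$2$ object of $\M$ is a Suslin set and therefore carries an ${\bf S}$-computable measure $\m$.

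Next I would verify $\M\models\FIVE^{\omega}+\WHBU$. For $(\SS^{2})$: the constant $\SS$ is interpreted by the genuine Suslin functional, and by the absoluteness lemma every $\Sigma^{1}_{1}$-formula with parameters from $\SUS$ holds in $\M$ iff it holds in $(\SUS,\N^{\N})$ iff it holds outright, so $\SS$ decides it correctly; together with $\RCAo$ this yields $\FIVE^{\omega}$. For $\WHBU$: given $F:C\di\N$ in $\M$, the constant $\Lambda_{\bf S}$ produces $\Lambda_{\bf S}(F)=\{g_{i}\}_{i\in\N}\in\M$ with $\m(\bigcup_{i}[\bar g_{i}F(g_{i})])=1$; since the measure of a finite union of basic clopen sets is an ${\bf S}$-computable rational determined by finite data, $\M$ can locate $n$ with $\m(\bigcup_{i\leq n}[\bar g_{i}F(g_{i})])\geq 1-\eps$, which is precisely the finite sequence required by $\WHBU$. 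The general formulation of Definition~\ref{ohyee} over a measurable $E$ is handled the same way, using that $E$ is a Suslin set with ${\bf S}$-computable measure by fact (ii) and applying $\Lambda_{\bf S}$ to the cover relativised to $E$; the passage between $C$ and $[0,1]$ is made through the measure-preserving map $\r$ and the equivalence $\HBU\asa\HBU_{\c}$.

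It remains to refute $\HBU$ in $\M$, for which I would argue with $\HBU_{\c}$. By the constructions of \cite{dagsam,dagsamII} relativised to the Suslin functional (as in the discussion of Section~\ref{vintro}, where $\Theta+{\bf S}$ computes Gandy's Superjump $\SJ$), there is a $G_{0}:C\di\N$ computable in ${\bf S}$ whose canonical cover admits no finite sub-cover computable in ${\bf S}$: any such sub-cover, together with ${\bf S}$, would compute $\SJ$, and $\SJ$ is not computable in ${\bf S}$. Now suppose toward a contradiction that $\M\models\HBU_{\c}$. Since $G_{0}$ is ${\bf S}$-computable it is an object of $\M$, so $\M$ furnishes a finite sequence $\langle f_{1},\dots,f_{k}\rangle$ of its own objects with $\M\models(\forall f\leq_{1}1)(\exists i\leq k)(f\in[\bar f_{i}G_{0}(f_{i})])$. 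By fact (i) each $f_{i}$ is computable in ${\bf S}$, and the set $U:=\bigcup_{i\leq k}[\bar f_{i}G_{0}(f_{i})]$ is clopen; were $U\neq C$, its nonempty clopen complement would contain some $[\sigma]$ and hence the ${\bf S}$-computable point $\sigma*0^{\omega}\notin U$, contradicting that $U$ covers every ${\bf S}$-computable point of $C$. Thus $\langle f_{1},\dots,f_{k}\rangle$ is a genuine, ${\bf S}$-computable finite sub-cover of the canonical cover of $G_{0}$, contradicting the choice of $G_{0}$. Hence $\M\not\models\HBU_{\c}$, and since $\HBU\asa\HBU_{\c}$ is provable in the base theory we conclude $\M\models\neg\HBU$.

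I expect the main obstacle to lie in the two places where the internal logic of the $\HEC$ must be matched against genuine computability: checking that $\M$ really models $\RCAo$ and $\WHBU$ (in particular that $\Lambda_{\bf S}$ realises $\WHBU$ over an arbitrary measurable $E$, not merely over $C$), and --- most delicately --- securing the bad instance $G_{0}$ at the level of ${\bf S}$. The latter is the crux: one must ensure the non-computability phenomenon is witnessed by a \emph{single} ${\bf S}$-computable $G_{0}$ all of whose finite sub-covers are non-${\bf S}$-computable, i.e.\ the relativisation to ${\bf S}$ of the hyperarithmetical bad instances of \cite{dagsam,dagsamII}. Should a single bad instance prove awkward, the alternative is to show $\M\models\QFAC^{2,1}$ (via the Gandy selection already used in the proof of Lemma~\ref{lemma.ind}), to extract a $\Theta$-functional from $\M\models\HBU_{\c}$ through the equivalence $\HBU\asa(\exists\Theta)\SFF(\Theta)$, and to note that this $T$-definable $\Theta$ is computable in ${\bf S}$ by Theorem~\ref{theorem.lambda}, whence $\SJ$ would be computable in ${\bf S}$, a contradiction.
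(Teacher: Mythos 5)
Your overall architecture is exactly the paper's: the same $\HEC$ model $\M$ built from the $\SUS_{\sigma}$ and $\Lambda_{\bf S}$, the same verification of $\FIVE^{\omega}$ (absoluteness) and of $\WHBU$ (the collapse of $\Lambda_{\bf S}$ is an internal realiser), and the same reduction of $\neg\HBU$ to exhibiting one instance \emph{in} $\M$ none of whose finite sub-covers lies in $\M$; your clopen-complement argument, showing that an internal sub-cover by elements of $\M$ is automatically a genuine sub-cover, is a nice explicit rendering of a step the paper leaves implicit. The genuine gap is precisely the step you flag as the crux: the bad instance $G_{0}$. Two things go wrong. First, ``$G_{0}$ is ${\bf S}$-computable, hence an object of $\M$'' is a non sequitur: membership in $\M$ at type two means definability by a term of G\"odel's $T$ from $\mu^{2}$, $\SS$, $\Lambda_{\SS}$ and elements of $\SUS$, and Kleene S1-S9 computability in ${\bf S}$ does not in general yield such a term; the paper is careful on exactly this point, using an instance that is $T$-definable from an arithmetical $F$, $\mu^{2}$ and $\SS$. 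Second, the justification ``any such sub-cover, together with ${\bf S}$, would compute $\SJ$'' does not follow from the cited fact that $\SJ$ is computable in ${\bf S}$ plus any $\Theta$-functional: that is a statement about a type-three functional, whose proof applies $\Theta$ to covers depending on the type-two argument; what you need is a \emph{single} cover, lying in $\M$, every genuine finite sub-cover of which computes (with ${\bf S}$) some fixed real outside $\SUS$, such as $\lambda e.\SJ({\bf S},e)$. Nothing you cite provides this, and the ``naive'' relativisation of the constructions of \cite{dagsam, dagsamII} from $\exists^{2}$ to ${\bf S}$ fails: the objects those sub-covers compute (iterated jumps/hyperjumps along given well-orderings) become, after relativisation, hyperjump iterations along ${\bf S}$-computable well-orderings, and these are still inside $\SUS$, so no contradiction with Theorem \ref{theorem.lambda} arises.

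What the paper supplies at exactly this point is a different ingredient, and it is the real content of its proof: by \cite{dagcie18}*{Theorem~1.c)}, for an arithmetical $F:C\di C$ there is a single functional $G$, defined by a term of G\"odel's $T$ from $F$, $\mu^{2}$ and $\SS$ (hence an object of $\M$), such that \emph{any} finite sub-cover $f_{1},\dots,f_{n}$ of the canonical cover of $G$ computes, together with ${\bf S}$, the fixed point of the non-monotone inductive definition given by $F$; and by Richter's theorem \cite{vijfopdeschaal} one can choose $F$ (arithmetical, even $\Pi^{0}_{2}$) so that this fixed point is not computable in ${\bf S}$, i.e.\ not in $\SUS$. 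With these two facts the contradiction is immediate, since by your own clopen argument the internal sub-cover is genuine and consists of ${\bf S}$-computable reals. Your fallback route does not evade the problem: an internal $\Theta$ extracted from $\M\models\HBU_{\c}+\QFAC^{2,1}$ only satisfies $\SFF(\Theta)$ for covers \emph{in} $\M$, so to reach a contradiction you must again exhibit covers in $\M$ whose sub-covers are non-${\bf S}$-computable --- the same missing instance; moreover Theorem \ref{theorem.lambda} bounds type-one functions computable in $\Lambda_{\bf S}+{\bf S}$, so it cannot be invoked to say that ``$\Theta$ is computable in ${\bf S}$''; it can only be applied to reals decoded from $\Theta$ on arguments in $\M$. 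In short, the model-theoretic frame of your proposal coincides with the paper's, but the decisive ingredient --- Richter's non-monotone inductive definitions packaged into a $T$-definable cover via \cite{dagcie18} --- is missing, and the superjump-based substitute you propose does not, as stated, fill that hole.
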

\begin{proof}
We construct a model $\M$ of $\FIVE^{\omega}+\WHBU+\neg\HBU$.
This model $\M$ is the aforementioned \emph{hereditarily extensional collapse}  $\{{\rm \SUS}_{\sigma}/\sim_{\sigma}\}_{\sigma\;\;{\rm \type}}$, seen as a type structure. Clearly, $\M$ satisfies $(\SS^{2})$ as $\SUS$ is unchanged under the collapse; $\M$ satisfies ${\RCA}_0^\omega$ since $\M$ is a model of G\"odel's $T$ and we use $\mu^{2}$ to ensure that $\QFAC^{1,0}$ is satisfied.  The model $\M$ satisfies ${\rm \WHBU}$ since the collapse of $\Lambda_{\bf S}$ is a realiser for ${\rm \WHBU}$ within $\M$. 

\smallskip

Next, we show that $\HBU$ fails in $\M$.  To this end, we consider \cite{dagcie18}*{Theorem~1.c)} which establishes that the functional $\Gamma$, the realiser for non-monotone inductive definitions, is computable in any $\Theta$-functional and $\SS^{2}$. 
Now, Richter proves in \cite{vijfopdeschaal} that even inductive definitions given by arithmetical functionals $F:C\di C$
 (actually $\Pi^0_2$ suffices) can have closure ordinals beyond the first recursively Mahlo ordinal; these can therefore be used to construct $f\in C$ not computable in ${\bf S}^{2}$. 

\smallskip

Let $F:C \rightarrow C$ be one such arithmetical functional.  
In the proof of \cite{dagcie18}*{Theorem~1.c)}, a functional $G:C \rightarrow C$ is defined in terms of $F$, $\mu$, and $\SS$ via a term of G\"odel's $T$.  This functional defines an open covering of $C$ and its key property is that whenever $f_1 , \ldots , f_n$ defines a finite sub-covering, then the set inductively defined from $F$ is computable in $f_1 , \ldots , f_n,{\bf S}$, and in fact definable from $f_1 , \ldots , f_n , {\bf S}, \mu^{2}$ by a term of G\"odel's $T$. 
Since $F$ is chosen as an arithmetical functional such that the fixed point of the associated inductive definition is not in $\M$, while $G$ is in $\M$, we must have that one of $f_1 , \ldots , f_n$ is outside $\M$, and $\HBU$ fails in the latter.
\end{proof}
We conjecture that $\Z_2^\omega+\WHBU$ cannot prove $\HBU$, but have no (idea of a) proof.
\subsection{Two new hierarchies relating to second-order arithmetic}\label{kew}
We have previously shown that the combination $\mu^{2}+\Theta^{3}$ computes a realiser for $\ATR_{0}$, while the latter schema (not involving a realiser) is provable in $\ACAo+\HBU$ (\cite{dagsam, dagsamII, dagsamIII, dagsamV}).  We refer to this phenomenon as an `explosion' as both components are weak (in isolation) compared to the combination.   
The aim of this section is to exhibit a number of similar explosions that provide a sweeping generalisation of the aforementioned results, yielding new hierarchies parallel to the usual hierarchy of second-order arithmetic based on comprehension. 
A similar parallel hierarchy is described in \cite{shoma}, but based on the \emph{axiom of determinacy} from set theory, while we work with the -more natural in our opinion- theorem $\HBU$ and its realiser $\Theta$. 

\smallskip

First of all, as expected, a central role is played by transfinite recursion, which we now define.  Let $\WO(X)$ express that $X$ is a countable well-ordering as in \cite{simpson2}*{V.1.1}.  The following definition may be found in \cite{simpson2}*{VI.7}.  
To be absolutely clear, $\theta$ below is part of $\L_{2}$: no type two parameters are allowed.  
\bdefi[$\PITK$]
For any $\theta\in \Pi_{k}^{1}$ and $X\subseteq \N$, we have
\[
\WO(X)\di (\exists Y\subseteq \N)H_{\theta}(X, Y), 
\]
where $H_{\theta}(X, Y)$ states that $X$ is a linear ordering and that $Y=\{(m,j): j\in \textsf{\textup{field}}(X)\wedge \theta(n, Y^{j})  \}$ for $Y^{j}:=\{ (m,i): i<_{X}j \wedge (m, i)\in Y  \}$. 
\edefi
We note the unfortunate use of `$\theta$' for an $\L_{2}$-formula, and `$\Theta$' for special fan functionals.  
By \cite{simpson2}*{Table 4}, $\PITK$ is strictly between $\SIXk$ and $\Pi_{k+1}^{1}\textup{\textsf{-CA}}_{0}$.

\smallskip

Secondly, we prove the following two theorems. 
\begin{thm}
Uniformly for each instance of $\Theta^{3}$, there is a type three functional $\TR(F,A,< )$, where $F:2^\N \di 2^\N$  and $<_{A}$ is a binary relation on $A  \subseteq \N$, such that if $(A,<_{A})$ is a well-ordering and $a \in A$ then 
\be\label{KIT}
\TR(F,A,<_{A})(a) = F\big(\{\langle b,c\rangle \in A\times A : b <_{A} a \wedge c \in \TR(F,A,<_{A})(b)\}\big), 
\ee
i.e.\ $ \TR(F,A,<_{A})(a) $ is the result of iterating $F$ along $(A, <_{A})$ up to $a$.
\end{thm}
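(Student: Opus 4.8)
The plan is to construct $\TR$ by effective transfinite recursion, carrying out the recursion with the finite sub-cover supplied by $\Theta$, exactly as in the proof that $\mu^{2}+\Theta^{3}$ computes a realiser for $\ATR_{0}$ from \cite{dagsam, dagsamIII, dagcie18}; the only genuinely new feature is that the one-step operator is now the arbitrary type-two parameter $F$ rather than an arithmetical operator.

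First I would work in the Kleene (S1-S9) computability theory relative to $\Theta$, $\exists^{2}$ and $\mu^{2}$, all of which are available in this setting. Using the recursion theorem for S1-S9, I obtain a partial functional $\TR$ that formally satisfies the recursion \eqref{KIT}: to evaluate $\TR(F,A,<_{A})(a)$ one computes the prior values $Y_{b}:=\TR(F,A,<_{A})(b)$ for $b<_{A}a$, assembles the history set $H_{a}:=\{\langle b,c\rangle : b<_{A}a \wedge c\in Y_{b}\}\subseteq \N$ --- which is available from $\exists^{2}$, the relation $<_{A}$ and the $Y_{b}$ --- and outputs $F(H_{a})$. Here $\exists^{2}$ is used to decide the predicates $b<_{A}a$ and $c\in Y_{b}$ and to present $H_{a}$ as a completed element of $2^{\N}$ before feeding it to $F$. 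The content of the theorem is that this $\TR$ is in fact \emph{total} on well-orderings, and that the whole construction is a single term of G\"odel's $T$ in the constants $\Theta,\exists^{2},\mu^{2},F$, hence uniform in the chosen instance of $\Theta$.

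The core of the argument, and the only place where $\Theta$ is essential, is to drive the recursion all the way along $(A,<_{A})$: the naive recursion above only reaches the stages lying below the ordinal accessible to $\exists^{2}$ and the parameters, whereas $(A,<_{A})$ may be much longer. To pass this barrier I would reproduce the transfinite bookkeeping of the cited $\ATR_{0}$-realiser construction: at a stage $a$ one encodes the possible \emph{runs} of the recursion up to $a$ as elements of Cantor space and defines an auxiliary $G:2^{\N}\di \N$ whose canonical cover records the least point at which a run fails to obey \eqref{KIT}; applying $\Theta$ to $G$ returns a finite sub-cover $\langle g_{1},\dots,g_{m}\rangle$ whose members --- precisely because they need not be hyperarithmetical --- encode correct and cofinally long initial segments of the solution, and well-foundedness of $<_{A}$ then forces these segments to exhaust $A$. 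Replacing the arithmetical operator of \cite{dagsamIII, dagcie18} by $F$ affects only the (trivial, since $F$ is given as an oracle) evaluation of a single step and leaves this compactness argument intact.

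Finally, with totality secured, I would verify \eqref{KIT} itself and the uniqueness of the values by transfinite induction along $<_{A}$: the recursion-theorem clause yields the equation at $a$ from the values at all $b<_{A}a$, and induction shows the extracted functional is independent of the incidental choices made while reading off the $g_{i}$, since the right-hand side of \eqref{KIT} depends only on the restriction of $\TR$ to the proper initial segment below $a$. I expect the main obstacle to be entirely concentrated in making the auxiliary covering functional $G$ precise enough that its finite sub-cover genuinely certifies a \emph{global} solution of \eqref{KIT} rather than merely a longer finite fragment; this is the technical heart shared with the $\ATR_{0}$-realiser constructions of \cite{dagsamIII, dagcie18}, to which I would defer for the detailed ordinal recursion.
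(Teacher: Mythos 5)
The paper's own proof of this theorem is a single line: it is quoted as immediate from \cite{dagsamII}*{Cor.\ 3.16}, so your instinct to lean on the Normann--Sanders realiser constructions is aligned with the paper. However, your reconstruction rests on a misdiagnosis of where the content of the theorem lies, and the step you present as ``the core of the argument'' is aimed at a problem that does not exist. You claim that the naive recursion-theorem construction ``only reaches the stages lying below the ordinal accessible to $\exists^{2}$ and the parameters, whereas $(A,<_{A})$ may be much longer''. This is false: in Kleene S1-S9 computability the ordering $<_{A}$ is itself one of the parameters, and convergence of a Kleene computation requires nothing beyond well-foundedness of its computation tree. Hence the fixed-point index supplied by the recursion theorem, with $F$ and $<_{A}$ as oracles (note that $b<_{A}a$ and $c\in\TR(F,A,<_{A})(b)$ are decided by direct oracle calls, so not even $\exists^{2}$ is needed), converges at \emph{every} $a\in A$ whenever $(A,<_{A})$ is genuinely well-founded: one proves this by transfinite induction along $<_{A}$ in the ZFC metatheory, the computation tree having rank roughly $\omega\cdot\mathrm{otp}(<_{A})$, which is a perfectly admissible rank for a convergent S1-S9 computation relative to these oracles. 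Effective transfinite recursion with a type-two oracle step function is simply computable; no compactness is needed to ``drive the recursion along $(A,<_{A})$'', and the covering machinery you import for this purpose is solving a non-problem.

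What the recursion-theorem construction yields is a \emph{partial} functional, divergent whenever $(A,<_{A})$ is ill-founded. The actual content of the theorem---and the reason $\Theta$ appears in its statement---is that $\TR$ is a \emph{total} type-three functional, obtained uniformly from $\Theta$, which satisfies \eqref{KIT} when the input happens to be well-founded. Totality is exactly what the subsequent corollary needs: a realiser for $\PITK$ must output some $Y$ on \emph{every} input $(X,<_{X},f)$, with correctness demanded only under $\WO(X)$, and one cannot totalise the naive recursion by a case-split because well-foundedness is $\Pi^{1}_{1}$-complete, hence undecidable from the oracles at hand. The point of the constructions in \cite{dagsamII} and \cite{dagcie18} is that a single application of the \emph{total} functional $\Theta$ to a suitably designed covering functional, followed by post-processing of the resulting finite sub-cover, always terminates, while on well-founded inputs its output is forced to be the correct hierarchy. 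Your sketch never confronts ill-founded inputs at all: your auxiliary functional $G$ and the extraction from $\Theta(G)$ are analysed only under the hypothesis that $<_{A}$ is well-founded. As written, therefore, your argument establishes only the (trivial) statement about a partial functional correct on well-orderings, and misses the totality-cum-uniformity claim that makes the theorem worth stating.
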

\begin{proof}
Immediate by \cite{dagsamII}*{Cor.\ 3.16}.
\end{proof}
A realiser for $\PITK$ is a functional that takes as input $X\subseteq \N$, a binary relation $<_{X}$, and $f:\N\di \N$, and outputs $Y\subseteq \N$ such that $H_{\theta}(X, Y)$ if $\WO(X)$ and $\theta$ is the $\Pi_{k}^{1}$-formula in Kleene normal form with  $f(x_{1},\dots,  x_{k}, n)=0$ providing the innermost quantifier-free part. 
\begin{cor}
The combination $\SS_{k}^{2}+\Theta$ computes a realiser for $\PITK$. 
\end{cor}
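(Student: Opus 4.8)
The plan is to reduce a realiser for $\PITK$ to the transfinite recursion functional $\TR$ from the previous theorem.  Recall that $\TR$ is obtained uniformly from any instance of $\Theta$ and, by \eqref{KIT}, iterates an \emph{arbitrary} type three functional $F:2^{\N}\di 2^{\N}$ along a well-ordering $(A,<_{A})$.  Since the recursion defining $H_{\theta}$ builds $Y$ column-by-column along $\field(X)$, where the $j$-th column is $\{m:\theta(m,Y^{j})\}$ and $Y^{j}$ encodes the columns already built below $j$, the whole construction is precisely an instance of transfinite recursion whose single step is ``form the next column from the history''.  Thus it suffices to exhibit that step as a functional computable in $\SS_{k}^{2}$, and then feed it to $\TR$.

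Concretely, I would fix a function $f^{1}$ coding the matrix of the $\Pi_{k}^{1}$-formula $\theta$ in Kleene normal form, view a set $H\subseteq \N$ as coding a subset of $\N\times\N$ via pairing, and define the step functional
\be\label{stepjes}
F(H):=\{m^{0}:\theta(m,H)\}.
\ee
The point is that $F$ is computable in $\SS_{k}^{2}$, uniformly in $f$: for fixed $m$, deciding $\theta(m,H)$ is a $\Pi_{k}^{1}$-question with \emph{function} parameters $H,f$ and number parameter $m$, and $\SS_{k}^{2}$ decides $\Sigma_{k}^{1}$-formulas (hence their negations, i.e.\ $\Pi_{k}^{1}$-formulas) with function parameters.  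In particular $F$ is a \emph{total} type three functional, so it is a legitimate input to $\TR$.

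With $F$ in hand, the realiser proceeds as follows on input $X$, $<_{X}$ and $f$ with $\WO(X)$: put $A:=\field(X)$, $<_{A}:=<_{X}$, compute $\TR(F,A,<_{A})$ (here \eqref{KIT} is valid since $(A,<_{A})$ is a well-ordering), and output
\be\label{assembleY}
Y:=\{(m,j):j\in\field(X)\wedge m\in\TR(F,A,<_{A})(j)\}.
\ee
The verification of $H_{\theta}(X,Y)$ is where the only genuine bookkeeping lies: by \eqref{KIT} the argument fed to $F$ at stage $j$ is $\{\langle b,c\rangle:b<_{X}j\wedge c\in\TR(F,A,<_{A})(b)\}$, and identifying the pair $\langle b,c\rangle$ with $(c,b)$ turns this into exactly $Y^{j}=\{(m,i):i<_{X}j\wedge(m,i)\in Y\}$ via \eqref{assembleY}.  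Hence $\TR(F,A,<_{A})(j)=F(Y^{j})=\{m:\theta(m,Y^{j})\}$, i.e.\ the $j$-th column of $Y$ is $\{m:\theta(m,Y^{j})\}$, which is precisely the defining clause of $H_{\theta}(X,Y)$.

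I expect no serious obstacle here: the heavy lifting is done by the $\TR$-theorem, and the two remaining points---that $F$ is uniformly computable in $\SS_{k}^{2}$, and that the pairing convention of $\TR$ matches the history $Y^{j}$ in $H_{\theta}$---are routine.  The one thing to check carefully is that the $\Pi_{k}^{1}$-predicate $\theta(m,H)$ really does fall within the scope of $\SS_{k}^{2}$ once $H$ is treated as a function parameter; granting this, the map $(X,<_{X},f)\mapsto Y$ is computable in $\SS_{k}^{2}+\Theta$, which is the claim.
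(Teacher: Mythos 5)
Your proposal is correct and is exactly the paper's argument: the paper's proof is the one-line observation that the corollary ``follows from the theorem by recalling that $\SS_{k}^{2}$ decides $\Pi_{k}^{1}$-formulas,'' i.e.\ precisely your reduction of the $H_{\theta}$-recursion to $\TR$ with the step functional $F(H)=\{m:\theta(m,H)\}$ decided by $\SS_{k}^{2}$. You have merely spelled out the bookkeeping (assembling $Y$ from the columns and matching the pairing conventions) that the paper leaves implicit.
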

\begin{proof}
This follows from the theorem by recalling that $\SS_{k}^{2}$ decides $\Pi_{k}^{1}$-formulas.  
\end{proof}
Finally, we have the following theorem. 
\begin{thm}
The system $\SIXK+\HBU$ proves $\PITK$. 
\end{thm}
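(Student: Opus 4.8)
The plan is to obtain $\PITK$ as the provability counterpart of the preceding Corollary: $\HBU$ will supply the special fan functional $\Theta$, hence (by the previous theorem) the transfinite recursion functional $\TR$, while $\SS_k^2$ supplies the single $\theta$-step; assembling the iterate then yields the set required by $\PITK$.

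First I would work inside $\SIXK+\HBU$ and record the two ingredients. On the one hand, $\SIXK$ furnishes $\SS_k^2$ and hence $(\exists^2)$ and Feferman's $\mu$. On the other hand, $\HBU$ yields a functional $\Theta$ with $\SFF(\Theta)$, via the equivalence $\HBU\asa(\exists\Theta)\SFF(\Theta)$ over $\RCAo+\QFAC^{2,1}$ recorded in Section~\ref{prelim2}. From this $\Theta$, the previous theorem hands us the functional $\TR(F,A,<_A)$ satisfying the recursion equation \eqref{KIT} whenever $(A,<_A)$ is a well ordering.

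Now fix an instance $\theta\in\Pi_k^1$ of the schema $\PITK$ and some $X\subseteq\N$ with $\WO(X)$. I would define the one-step operator $F:2^\N\di 2^\N$ by $F(Z):=\{m:\theta(m,Z)\}$. Since $\theta$ is $\Pi_k^1$ and, being part of $\L_2$, carries no type-two parameters, $\SS_k^2$ decides $\theta(m,Z)$ uniformly in $m$ and the set parameter $Z$, so $F$ is a genuine functional available in the theory. Applying $\TR(F,X,<_X)$ and reading \eqref{KIT}, for every $a\in\field(X)$ the value $\TR(F,X,<_X)(a)$ is $F$ applied to the assembled earlier stages, which by the choice of $F$ is exactly $\{m:\theta(m,Y^a)\}$ in the notation of $H_\theta$. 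Setting
\[
Y:=\{(m,a):a\in\field(X)\wedge m\in\TR(F,X,<_X)(a)\},
\]
the column $Y^a$ coincides (modulo pairing) with the argument fed to $F$ in \eqref{KIT}, so $Y=\{(m,a):a\in\field(X)\wedge\theta(m,Y^a)\}$, which is precisely $H_\theta(X,Y)$. Since $X$ is a linear ordering, this establishes the conclusion of $\PITK$ for the given instance.

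The genuinely non-trivial content is carried by the previous theorem: that $\TR(F,X,<_X)(a)$ is defined for all $a\in\field(X)$ under $\WO(X)$ and obeys \eqref{KIT} is exactly the totality of transfinite recursion along the well-order that mere $\Pi_k^1$-comprehension cannot provide, and it is here that $\HBU$ (through $\Theta$) does the work. Consequently the only thing to check by hand is that $Y^a$ matches the $F$-argument in \eqref{KIT} at each $a$; this is immediate from the definition of $Y$, and if one prefers it is confirmed by transfinite induction along $(X,<_X)$, available since $\WO(X)$ holds. The remaining minor obstacle is the passage from $\HBU$ to an actual $\Theta$-functional, which uses a small amount of choice ($\QFAC^{2,1}$); everything else is the bookkeeping of matching \eqref{KIT} to the defining clause of $H_\theta$.
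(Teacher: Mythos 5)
Your proposal conflates the paper's computational corollary with the proof-theoretic theorem, and this creates two genuine gaps. First, your opening step---extracting a $\Theta$-functional from $\HBU$---requires $\QFAC^{2,1}$, as you yourself note; but $\SIXK+\HBU$ contains only the $\QFAC^{1,0}$ included in $\RCAo$, and the theorem is stated for exactly this choice-free system. This is not a ``minor obstacle'': avoiding that instance of choice is precisely why the paper does not route its proof through $\Theta$ at all, and several other results in the paper (e.g.\ Theorem \ref{coromag}) explicitly add $\QFAC^{2,1}$ to the base theory when it is needed, whereas this theorem does not. Second, even granting a $\Theta$-functional, the ``previous theorem'' you invoke is a statement of ZFC-level Kleene computability (its proof is ``Immediate by \cite{dagsamII}*{Cor.\ 3.16}''): it asserts that a functional $\TR$ satisfying \eqref{KIT} exists in the full type structure and is computable from $\Theta$. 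To use it the way you do, the formal system $\SIXK+\HBU$ would itself have to prove, for some explicitly given functional $\TR$, that \eqref{KIT} holds whenever $\WO(X)$; neither the paper nor your proposal supplies such an internal verification, and formalising an S1-S9 recursion-theorem construction inside a fragment of higher-order arithmetic is a substantial task that cannot simply be cited. Computing a realiser for $\PITK$ and proving $\PITK$ are different claims, which is why the paper states the corollary (computational) and this theorem (proof-theoretic) separately and gives the latter its own proof.

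The paper's actual proof stays inside the given system and avoids both problems by arguing by contradiction: assuming $(\forall Y\subseteq\N)(\exists k^{0})\neg H_{\theta}(k,X,Y)$, it notes that $H_{\theta}$ is decidable from $\SS_{k}^{2}$, so the available $\QFAC^{1,0}$ yields a functional $G:C\di\N$ choosing a failure stage for each $Y$; applying $\HBU_{\c}$ to this single canonical cover gives a finite subcover $f_{1},\dots,f_{k_{0}}$, and the fact (provable from $\SS_{k}^{2}$ plus induction) that any partial solution of the recursion can be extended one step, hence finitely many steps, contradicts the finite data via $k_{1}:=\max_{i\leq k_{0}}G(f_{i})$. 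If you want to salvage your approach, you must either add $\QFAC^{2,1}$ to the hypotheses---thereby proving a strictly weaker theorem than the one stated---or show directly that $\HBU$ alone yields, provably in $\SIXK$, a functional satisfying the recursion equation, at which point you would essentially be reproducing the paper's argument.
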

\begin{proof}
Our proof proceeds via contradiction: fix $X\subseteq \N$ such that $\WO(X)$ and suppose we have $(\forall Y\subset\N)(\exists k^{0})\neg H_{\theta}(k, X, Y)$ for some $\L_{2}$-formula $\theta \in \Pi_{k}^{1}$.  
Here, $H_{\theta}(i, X, Y)$ is just $H_{\theta}(X, Y)$ with the additional restriction $j<_{X}k$, as can be found in \cite{simpson2}*{V.2.2}.  Clearly, $H_{\theta}(X, Y)$ is decidable given $\SS_{k}^{2}$ and $\QFAC^{1,0}$ applied to
$(\forall Y\subset\N)(\exists k^{0})\neg H_{\theta}(k, X, Y)$ yields some $G:C\di \N$.  Use $\HBU_{\c}$ to obtain $f_{1}, \dots, f_{k_{0}}\in C$ such that $\cup_{i\leq k_{0}}[\overline{f_{i}}G(f_{i})]$ still covers $C$. 
We now note that given $\SS_{k}^{2}$, we can always apply transfinite recursion `once more', i.e.\ given $k$ and $Y$ such that $H(k, X, Y)$, we can define $Z\subset \N$ such that $H(k', X, Z)$, where $k'$ is the least number above $k$ according to $<_{X}$.
The same holds for a finite number of iterations via $\IND$.  
Now consider $k_{1}:=\max_{i\leq k_{0}}G(f_{i})$ and note that we have obtained a contradiction; $\IND$ can be avoided by letting $G^{2}$ be the least number as above.  
 \end{proof}
 \section{Discussion and conclusion}\label{konkelfoes}
 We discuss two observations (Section \ref{per}) and some foundational musings (Section \ref{muse}) pertaining to our results.  
 \subsection{Two observations}\label{per}
 We dicuss the possibility for a template based on our results, and an interesting observation dubbed \emph{dichotomy phenomenon}.
 \subsubsection{Towards a template}\label{tempy}
 The proof of Theorem \ref{theorem.lambda} is similar to a proof of the existence of $\Lambda_{\exists^2}$, to such an extent that we just gave reference to that proof for many of the technicalities. Based on the theories on measure-theoretic uniformity for recursion in $\exists^2$ and in ${\bf S}$, the main constructions follow the same pattern.
  The question is how much further this kind of construction could lead.
 
 \smallskip
 
The results on measure-theoretic uniformity turned out to be quite similar for computability in $\exists^2$ and in ${\bf S}$.  In each case, the measure theory of subsets of the continuum computable in $\exists^2$ or ${\bf S}$ can be handled \emph{within} the class of functions computable in $\exists^2$ and ${\bf S}$ via suitable coding. It seems unlikely that something like this can be relativised to \emph{all} functionals of type 2. In this light, we offer the following open problem.
 \begin{problem} 
 Is there a functional $F$ of type 2 such that for all $\Lambda$-functionals $\Xi$ there is a $\Theta$-functional computable in $F$ and $\Xi$?
 \end{problem}
 We conjecture the answer to be negative, but see no way to establish this.
 
 \subsubsection{A dichotomy phenomenon}\label{dich}
The main result of this section is another example of a `dichotomy' phenomenon that we have observed during the study of functionals arising from classical theorems, namely as follows.

\smallskip

On one hand, \emph{positive results} about relative dependence are of the form that elements in one class of functionals can uniformly be defined from elements in another class of interest \emph{via a term in a small fragment} of G\"odel's $T$.
On the other hand, \emph{negative results} are of the form that there is one element $\Phi$ in one class such that no element $\Psi$ in the other class is \emph{computable} in $\Phi$ in the sense of Kleene, often even not relative to any object of lower type. 

\smallskip

We find this to be an interesting observation, and a source for classification of the (computational) strength of theorems.

\subsection{Foundational musings}\label{muse}
We discuss the foundational implications of our results, which we believe to be rather significant \emph{and} different in nature from \cite{dagsamIII, dagsamV}.  

\smallskip

As noted above, the development of measure theory in `computational' frameworks like e.g.\ Reverse Mathematics, constructive mathematics, and computable analysis, 
proceeds by studying the computational properties of \emph{countable approximations} of measurable objects.  To be absolutely clear, theorems in these fields are generally \emph{not} 
about objects \emph{themselves}, but about \emph{representations} of  objects. Of course, this observation is of little concern \emph{in general} as there are `representation theorems' that express
that `nice' representations always exist.  Nonetheless, there are two conceptual problems that arise from our results, as follows. 

\smallskip

First of all, \emph{in the particular case} of RM, there is a potential problem with using representations: the aim of RM is to find the minimal axioms required to prove theorems of ordinary mathematics `as they stand' (see \cite{simpson2}*{I.8.9.5} for this exact wording).  Thus, the logical strength/hardness 
of a theorem should not change upon the introduction of representations, lest this distort the RM-picture!
However, we have identified interesting theorems, i.e.\ the Vitali covering theorem and $\WHBU$, for which the hardness changes quite dramatically upon introducing codes.  
Indeed, in terms of (conventional) comprehension, $\WHBU$ is not provable in $\Z_{2}^{\omega}$ but provable $\Z_{2}^{\Omega}$; the same holds if we restrict to Baire 2 or semi-continuous functions (see Footnote \ref{floeker}).
However, $\ATR_{0}+\Delta_{2}^{1}$-induction proves $\HBU$ (and hence $\WHBU$) formulated via codes for Borel functions by \cite{basket2}*{Prop.\ 3.2}.  

\smallskip

Secondly, there is another, more subtle, aspect to our results, namely pertaining to the formalisation of mathematics in second-order arithmetic.  
Simpson (and many others) claims that the latter can accommodate large parts of mathematics:
\begin{quote}
[\dots] focusing on the language of
second order arithmetic, the weakest language rich enough to express and develop
the bulk of mathematics. (\cite{simpson2}*{Preface})
\end{quote}
Let us first discuss a concept for which the previous quote is undeniably correct: continuous functions, which are represented by codes in RM (see \cite{simpson2}*{II.6.1}).  
Now, Kohlenbach has shown in \cite{kohlenbach4}*{\S4} that $\WKL$ suffices to prove that every continuous function on Cantor space has a code.  
Hence, assuming $\WKL$, a theorem in $\L_{\omega}$ about (higher-order) continuous functions (on Cantor space) does not really change if we introduce codes, i.e.\ there is a perfect match between the theorem expressed in $\L_{2}$ and $\L_{\omega}$.    
In other words, \emph{second-order} $\WKL$ (working in $\RCAo$) proves that the the second-order formalisation has the same scope as the original.  
In conclusion, $\L_{2}$ can talk about \emph{certain} continuous functions \emph{via codes}, and $\WKL$ (working in $\RCAo$) guarantees that the approach-via-codes actually is talking about \emph{all} (higher-order) continuous functions.  
In this light, Simpson's quote seems justified and correct.  

\smallskip

Our above results paint a different picture when it comes to measure theory: on one hand, a version of measure theory \emph{can} be expressed and developed in $\L_{2}$ using representations of measurable objects, as sketched in \cite{simpson2}*{X.1}.
On the other hand, if one wants the guarantee that the development in $\L_{2}$ using representations has the same \emph{scope or generality} as the original theory involving measurable objects, one needs to know that 
each measurable object has a representation.  To this end, one of course points to well-known approximation theorems like Lusin's.   However, the latter implies $\WHBU$ and is hard to prove in terms of conventional comprehension. 
In conclusion, $\L_{2}$ can talk about \emph{certain} measurable functions \emph{via codes}, but to know that the approach-via-codes actually is talking about \emph{all} measurable functions requires $\WHBU$ and hence $\Z_{2}^{\Omega}$, both of which are not part of second-order arithmetic.  

\smallskip

In conclusion, second-order arithmetic uses codes to talk about \emph{certain} objects of a given (higher-order) class, like continuous or measurable functions. 
However, to know that the $\L_{2}$-development based on codes has the same scope or generality as the original theory, one needs the guarantee that 
every (higher-order) object has a code.  Second-order arithmetic can apparently provide this guarantee in the case of continuous functions, \emph{but not in the case of measurable functions}.  Put another way, \emph{proving} that second-order arithmetic can fully express measure theory, seriously transcends second-order arithmetic.  

\begin{ack}\rm
We thank the anonymous referee for the many helpful suggestions that have markedly improved the paper. 
Our research was supported by the John Templeton Foundation, the Alexander von Humboldt Foundation, LMU Munich (via the Excellence Initiative and the Center for Advanced Studies of LMU), and the University of Oslo.
We express our gratitude towards these institutions. 
We thank Ulrich Kohlenbach, Karel Hrbacek, and Anil Nerode for their valuable advice.  
We thank Emil Je\v{r}\'{a}bek for the answer provided in \cite{jeranimo}.
Opinions expressed in this paper do not necessarily reflect those of the John Templeton Foundation.    
\end{ack}

\begin{bibdiv}
\begin{biblist}
\bibselect{allkeida}
\end{biblist}
\end{bibdiv}
\bye